\newtheorem{theorem}{Theorem}[section]
\newtheorem{lemma}[theorem]{Lemma}
\newtheorem{thm}[theorem]{Theorem}
\newtheorem{prop}[theorem]{Proposition}
\newtheorem{cor}[theorem]{Corollary}
\theoremstyle{definition}
\theoremstyle{remark}
\newtheorem{rmk}[theorem]{Remark}
\newtheorem{exmp}[theorem]{Example}
\numberwithin{equation}{section}
\def\rk{\mbox{\scriptsize rank\,}}
\def\Rk{{\rm rank\,}}
\def\Ker{{\rm Ker\,}}
\def\Im{{\rm Im\,}}
\def\mod{{\rm mod\;}}
\def\Mod{{\rm Mod}}
\def\Tor{{\rm Tor}}
\def\CL{\textsc{cl}}
\def\TN{\textsc{tn}}
\def\sp{\hspace{1ex}}
\def\sp{\hspace{0.3cm}}
\begin{document}

\title[Orientations, polytopes, and subgroup arrangements]
{Orientations, lattice polytopes, and group arrangements I: Chromatic and
tension polynomials of graphs}

\author{Beifang Chen}
\address{Department of Mathematics,
Hong Kong University of Science and Technology,
Clear Water Bay, Kowloon, Hong Kong}
\curraddr{}

\email{mabfchen@ust.hk}
\thanks{The research is supported by the RGC Competitive Earmarked Research Grant 600703}


\subjclass{05A99, 05B35, 05C15, 06A07, 52B20, 52B40}
\date{28 October 2005}


\keywords{Subgroup arrangements, orientations, coloring polytopes,
tension polytopes, chromatic polynomial, tension polynomial,
characteristic polynomial, subgroup arrangements, acyclic
orientations, oriented cuts, cut-equivalence relation}

\begin{abstract}
This is the first one of a series of papers on association of
orientations, lattice polytopes, and abelian group arrangements to
graphs. The purpose is to interpret the integral and modular tension
polynomials of graphs at zero and negative integers. The whole
exposition is put under the framework of subgroup arrangements and
the application of Ehrhart polynomials. Such viewpoint leads to the
following main results of the paper: (i) the reciprocity law for
integral tension polynomials; (ii) the reciprocity law for modular
tension polynomials; and (iii) a new interpretation for the value of
the Tutte polynomial $T(G;x,y)$ of a graph $G$ at $(1,0)$ as the
number of cut-equivalence classes of acyclic orientations on $G$.
\end{abstract}

\maketitle

\section{Introduction}

The chromatic polynomial $\chi(G,q)$ of a graph $G$, which was
introduced by Birkhoff \cite{Birkhoff1} as the number of proper
colorings of $G$ with $q$ colors, is one of the most mysterious but
fruitful polynomials in graph theory. It is the root of many other
important polynomials such as the characteristic polynomial for
graded posets and the Tutte polynomial for matroids; see
\cite{Bollobas1, Brylawski-Oxley1, Rota1, Stanley2}. Stimulated by
the Four Color Conjecture (FCC) of the time and the work of Rota
{\it et al} on the foundations of combinatorics, a flood of research
on such polynomials has continued for about three decades in
connection with graphs, matroids, posets, and simplicial complexes,
and the wave is still in its trend in connection with convexity,
toric geometry, Ehrhart polynomials, and topological invariants.

The key stimulation behind the research of characteristic
polynomials and their analogs is the geometric idea, which dates
back to Veblen's equivalent formulation of FCC \cite{Veblen1} as
solving a modular system of linear inequalities over the field
$GF(4)$. Such a geometric idea is implicit in the work of Crapo and
Rota \cite{Crapo-Rota1}, Greene and Zaslavsky
\cite{Greene-Zaslvsky1}, Kung \cite{Kung1}, Zaslavsky
\cite{Zaslavsky1}, and many others. More recent activities include
the work of Athanasiadis \cite{Athanasiadis1}, Bj\"orner and Ekedahl
\cite{Bjorner-Ekedahl1}, Chen \cite{Chen-Characteristic-Polynomial},
Ehrenborg and Readdy \cite{Ehrenborg-Readdy1}, Reiner
\cite{Reiner1}, and Sagan \cite{Sagan1}. To continue this geometric
trend, we propose to study systematically the chromatic polynomials,
flow polynomials, characteristic polynomials, and Tutte polynomials
in a series of papers by associating to graphs with orientations,
lattice polytopes, and abelian subgroup arrangements, and then
applying the theory of Ehrhart polynomials. The present paper is the
first one of a series of papers on such investigations,
concentrating on chromatic and tension polynomials. The main task is
to generalize Stanley's interpretation \cite{Stanley1} of chromatic
polynomials at negative integers to integral and modular tension
polynomials introduced by Kochol \cite{Kochol1}. Our approach is to
directly introduce group arrangements, lattice polytopes, and
equivalence relations on orientations of graphs in a rigorous
systematical way. We follow the books \cite{Bollobas1, Orlik-Terao1,
Stanley2, Ziegler1} for various notions on graphs, polytopes, and
hyperplane arrangements. For Ehrhart polynomials we refer to the
papers \cite{Chen-Lattice-Points, Chen-Ehrhart-Polynomial}. The
whole exposition is self-contained.

Let $G=(V,E)$ be a graph with possible loops and multiple edges; we
write $V=V(G)$ and $E=E(G)$. An {\em orientation} of $G$ is an
assignment that each edge of $G$ is assigned an arrow. If an edge is
incident with two vertices, there are two ways to assign an arrow;
if it is incident with one vertex, there is only one way to assign
an arrow. A graph with an orientation is called a {\em digraph}. A
{\em circuit} is a connected graph having degree 2 at every vertex;
a {\em direction} of a circuit is an orientation such that there is
an arrow in and an arrow out at every vertex; a circuit with a
direction is called a {\em directed circuit}. An orientation is said
to be {\em acyclic} if it has no directed circuit. We denote by
$\mathcal O(G)$ the set of all orientations of $G$, and by
$A\mathcal O(G)$ the set of all acyclic orientations. For positive
integers $q$, we define the counting function
\[
\bar\chi(G,q):=\#\big\{(\varepsilon,f)\:|\:\varepsilon\in A\mathcal
O(G), f\in\bigl([1,q]\cap{\Bbb Z}\bigr)^V, f(u)\geq f(v)
\;\mbox{for}\; u\xrightarrow{\varepsilon} v\big\}.
\]
For each acyclic orientation $\varepsilon\in A\mathcal O(G)$, let
$\bar\Delta^+_\CL(G,\varepsilon)$ denote the 0-1 polytope in the
Euclidean space ${\Bbb R}^V$ whose vertices are vectors
$f:V(G)\rightarrow\{0,1\}$ such that
\[
f(u)\geq f(v) \sp \mbox{for all}\sp u\xrightarrow{\varepsilon}v.
\]
We call $\bar\Delta^+_\CL(G,\varepsilon)$ the {\em coloring
polytope} and its interior $\Delta^+_\CL(G,\varepsilon)$ the {\em
open coloring polytope} of the digraph $(G,\varepsilon)$. Let
\[
\chi(G,\varepsilon;t) =L\bigl(\Delta^+_\CL(G,\varepsilon),t+1\bigr)
\]
denote the Ehrhart polynomial of $\Delta^+_\CL(G,\varepsilon)$ in
variable $t+1$, and let
\[
\bar\chi(G,\varepsilon;t)=L\bigl(\bar\Delta^+_\CL(G,\varepsilon),t-1\bigr)
\]
denote the Ehrhart polynomial of $\bar\Delta^+_\CL(G,\varepsilon)$
in variable $t-1$. The following Theorem~\ref{CP} is a reformulation
of Stanley's interpretation for chromatic polynomials at negative
integers.

\begin{thm}[Stanley \cite{Stanley1}]\label{CP}
Let $G$ be a loopless graph with possible multiple edges. Then the
chromatic polynomials $\chi(G,t)$ and $\bar\chi(G,t)$ can be written
as
\begin{eqnarray}
\chi(G,t)
&=& \sum_{\varepsilon\in A{\mathcal O}(G)} \chi(G,\varepsilon;t),\label{CP-Sum-Open} \\
\bar\chi(G,t) &=& \sum_{\varepsilon\in A{\mathcal
O}(G)}\bar\chi(G,\varepsilon;t). \label{CP-Sum-Closed}
\end{eqnarray}
Moreover, for each orientation $\varepsilon\in A{\mathcal O}(G)$, we have the
{\em reciprocity law}
\begin{equation}\label{Epsilon-CP-RL}
\chi(G,\varepsilon;-t)=(-1)^{|V|}\bar\chi(G,\varepsilon;t),
\end{equation}
\begin{equation}\label{CP-RL}
\chi(G,-t)=(-1)^{|V|}\bar\chi(G,t).
\end{equation}
In particular, $\bar\chi(G,\varepsilon;1)=(-1)^{|V|}\chi(G,\varepsilon;-1)=1$,
and $|\chi(G,-1)|$ counts the number of acyclic orientations of $G$.
\end{thm}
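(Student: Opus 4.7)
The plan is to identify each Ehrhart polynomial appearing on the right-hand side with an explicit count of colorings compatible with the given orientation, and then use the combinatorial bookkeeping of orientations of proper colorings together with Ehrhart--Macdonald reciprocity to handle everything else. First I would verify that $\bar\Delta^+_{\CL}(G,\varepsilon)$ is a full-dimensional lattice polytope in $\mathbb R^V$: it contains the all-zero and all-one vertices, and it is the order polytope of the poset on $V$ obtained as the transitive closure of $\varepsilon$. Then, by expanding the Ehrhart polynomials at $t\pm 1$, one gets
\[
\bar\chi(G,\varepsilon;q)=\#\bigl\{f\colon V\to[1,q]\cap\mathbb Z \,:\, f(u)\geq f(v)\text{ whenever }u\xrightarrow{\varepsilon}v\bigr\},
\]
\[
\chi(G,\varepsilon;q)=\#\bigl\{f\colon V\to[1,q]\cap\mathbb Z \,:\, f(u)>f(v)\text{ whenever }u\xrightarrow{\varepsilon}v\bigr\}.
\]

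For (\ref{CP-Sum-Closed}) the sum is then a tautology: the double-counting over pairs $(\varepsilon,f)$ that defines $\bar\chi(G,q)$ factors exactly as $\sum_\varepsilon \bar\chi(G,\varepsilon;q)$. For (\ref{CP-Sum-Open}) I would argue that any proper coloring $f$ (with $q$ colors) induces a unique acyclic orientation $\varepsilon_f$ by orienting $u\to v$ whenever $f(u)>f(v)$; acyclicity is automatic since $f$ is a strict potential. Conversely, every $f$ counted by $\chi(G,\varepsilon;q)$ is proper and forces $\varepsilon=\varepsilon_f$, so summing over $\varepsilon$ recovers $\chi(G,q)$ with each proper coloring counted once.

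For the reciprocities, the key input is Ehrhart--Macdonald reciprocity $L(P^\circ,n)=(-1)^{\dim P}L(\bar P,-n)$ applied to $P=\bar\Delta^+_{\CL}(G,\varepsilon)$ of dimension $|V|$. Substituting $n=t+1$ yields
\[
\chi(G,\varepsilon;t)=L\bigl(\Delta^+_{\CL}(G,\varepsilon),t+1\bigr)=(-1)^{|V|}L\bigl(\bar\Delta^+_{\CL}(G,\varepsilon),-t-1\bigr)=(-1)^{|V|}\bar\chi(G,\varepsilon;-t),
\]
which is (\ref{Epsilon-CP-RL}). Summing this identity over $\varepsilon\in A\mathcal O(G)$ and invoking the two sum formulas just proved gives (\ref{CP-RL}) immediately. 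The ``In particular'' clause then follows by direct inspection: $\bar\chi(G,\varepsilon;1)$ counts functions $V\to\{1\}$ satisfying trivially satisfied inequalities, hence equals $1$; applying (\ref{Epsilon-CP-RL}) gives $\chi(G,\varepsilon;-1)=(-1)^{|V|}$; and summing over $\varepsilon$ through (\ref{CP-RL}) yields $|\chi(G,-1)|=|A\mathcal O(G)|$.

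The only real substance lies in two preliminary checks: (a) that the vertices described by the 0-1 monotone functions $f\colon V\to\{0,1\}$ really are the vertices of the polytope $\bar\Delta^+_{\CL}(G,\varepsilon)$, and (b) that this lattice polytope is full-dimensional so that $(-1)^{\dim P}=(-1)^{|V|}$ in the Ehrhart--Macdonald formula. Both are standard facts about order polytopes, but they are the load-bearing technical input of the proof; everything else is either a direct combinatorial interpretation of lattice-point counts or a formal substitution.
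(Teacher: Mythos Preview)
Your proposal is correct and follows essentially the same approach as the paper: interpret $\chi(G,\varepsilon;t)$ and $\bar\chi(G,\varepsilon;t)$ as Ehrhart polynomials of the open and closed order polytopes, obtain (\ref{Epsilon-CP-RL}) from Ehrhart--Macdonald reciprocity, get (\ref{CP-Sum-Open}) from the disjoint decomposition of proper colorings by their induced acyclic orientation (the paper phrases this as the disjoint union $(q+1)\Delta^+_\CL(G)=\bigsqcup_{\varepsilon}(q+1)\Delta^+_\CL(G,\varepsilon)$, which is the geometric form of your bijection $f\mapsto\varepsilon_f$), and deduce (\ref{CP-RL}) and the evaluation at $-1$ by summing. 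Your explicit flagging of the full-dimensionality and vertex checks is a welcome addition; the paper simply asserts the 0-1 vertex description and uses $\dim\Delta^+_\CL(G,\varepsilon)=|V|$ without comment.
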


Let $(H_i,\varepsilon_i)$ be oriented subgraphs of $G$, $i=1,2$. We
introduce a {\em coupling} of $\varepsilon_1$ and $\varepsilon_2$,
defined as a function
$[\varepsilon_1,\varepsilon_2]:E\rightarrow{\Bbb Z}$ by
\begin{equation}
[\varepsilon_1,\varepsilon_2](x)=\left\{\begin{array}{rl}
1 & \mbox{if} \sp \varepsilon_1(x)=\varepsilon_2(x),\; x\in E(H_1)\cap E(H_2), \\
-1 & \mbox{if} \sp  \varepsilon_1(x)\neq \varepsilon_2(x),\; x\in E(H_1)\cap E(H_2), \\
0 & \mbox{otherwise}.
\end{array}\right.
\end{equation}

Let $A$ be an abelian group. A {\em tension} of $(G,\varepsilon)$ with values
in $A$ is a function $f:E(G)\rightarrow A$ such that for any directed circuit
$(C,\varepsilon_{\textsc c})$,
\begin{equation}\label{Tension-Definition}
\sum_{x\in C}[\varepsilon,\varepsilon_{\textsc c}](x)f(x)=0.
\end{equation}
We denote by $T(G,\varepsilon; A)$ the abelian group of all tensions
of $(G,\varepsilon)$ with values in $A$. Tensions with values in
$\Bbb R$ (or $\Bbb Z$) are called {\em real-valued} (or {\em
integral}). Let $q$ be a positive integer. A real-valued tension $f$
is called a {\em $q$-tension} if $|f(x)|<q$ for all $x\in E(G)$. We
denote by $\tau(G,q)$ the number of nowhere-zero tensions of
$(G,\varepsilon)$ with values in $A$ whose order is $q$, and by
$\tau_{\Bbb Z}(G,q)$ the number of nowhere-zero integral
$q$-tensions. It turns out that both $\tau(G,q)$ and $\tau_{\Bbb
Z}(G,q)$ are independent of the chosen orientation $\varepsilon$ and
the group structure of $A$. Moreover, $\tau(G,q)$ and $\tau_{\Bbb
Z}(G,q)$ are polynomial functions of positive integers $q$, having
degree $r(G)=|V|-k(G)$, where $k(G)$ is the number of connected
components of $G$. We call $\tau(G,q)$ and $\tau_{\Bbb Z}(G,q)$ the
{\em modular tension polynomial} and the {\em integral tension
polynomial} of $G$, respectively.

To interpret the values of the integral tension polynomial
$\tau_{\Bbb Z}(G,t)$ at negative integers, let us introduce the
counting function
\[
\bar\tau_{\Bbb Z}(G,q):=\#\big\{(\varepsilon,f)\:|\:\varepsilon\in
A\mathcal O(G), f\in T(G,\varepsilon;{\Bbb Z}), 0\leq f\leq q\big\}.
\]
For each $\varepsilon\in A\mathcal O(G)$, let
$\bar\Delta^+_{\TN}(G,\varepsilon)$ be the 0-1 polytope in the
tension vector space $T(G,\varepsilon;{\Bbb R})$ whose vertices are
vectors $f\in T(G,\varepsilon;{\Bbb R})$ such that $f(x)=0$ or $1$.
We call $\bar\Delta^+_{\TN}(G,\varepsilon)$ the {\em tension
polytope} and its interior $\Delta^+_{\TN}(G,\varepsilon)$ the {\em
open tension polytope} of digraph $(G,\varepsilon)$. Let $\tau_{\Bbb
Z}(G,\varepsilon;t)$ and $\bar\tau_{\Bbb Z}(G,\varepsilon;t)$ denote
the Ehrhart polynomials of $\Delta^+_{\TN}(G,\varepsilon)$ and
$\bar\Delta^+_{\TN}(G,\varepsilon)$, respectively. Our first main
result is the following theorem.

\begin{thm}\label{Integral-TNP}
Let $G$ be a loopless graph with possible multiple edges. Then $\tau_{\Bbb
Z}(G,q)$ and $\bar\tau_{\Bbb Z}(G,q)$ are polynomial functions of degree $r(G)$
in positive integers $q$, and can be written as
\begin{eqnarray}
\tau_{\Bbb Z}(G,t)
&=& \sum_{\varepsilon\in A{\mathcal O}(G)} \tau_{\Bbb Z}(G,\varepsilon;t),
\label{Integral-TNP-Sum-Open} \\
\bar\tau_{\Bbb Z}(G,t) &=& \sum_{\varepsilon\in A{\mathcal O}(G)}\bar\tau_{\Bbb
Z}(G,\varepsilon;t). \label{Integral-TNP-Sum-Closed}
\end{eqnarray}
Moreover, for each orientation $\varepsilon\in A{\mathcal O}(G)$, we have the
{\em reciprocity law}
\begin{eqnarray}
\tau_{\Bbb Z}(G,\varepsilon;-t) &=& (-1)^{r(G)}\bar\tau_{\Bbb Z}(G,\varepsilon;t),
\label{Epsilon-Integral-TNP-RL}\\
\tau_{\Bbb Z}(G,-t) &=& (-1)^{r(G)}\bar\tau_{\Bbb Z}(G,t). \label{TNP-RL}
\end{eqnarray}
In particular, $\bar\tau_{\Bbb Z}(G,\varepsilon;0)=(-1)^{r(G)}\tau_{\Bbb
Z}(G,\varepsilon;0)=1$; $|\tau_{\Bbb Z}(G,0)|$ counts the number of acyclic
orientations of $G$.
\end{thm}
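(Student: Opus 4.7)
The plan is to realize each counting function as a sum of Ehrhart polynomials of lattice polytopes in the tension space, and then apply Ehrhart--Macdonald reciprocity. The main combinatorial input is a sign-flip bijection that distributes nowhere-zero integral tensions across the set $A\mathcal{O}(G)$, in direct analogy with Stanley's argument underlying Theorem~\ref{CP}.

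First, I would verify that $\bar\Delta^+_{\TN}(G,\varepsilon) \subset T(G,\varepsilon;{\Bbb R})$ is a lattice polytope of full dimension $r(G)$, with $\Delta^+_{\TN}(G,\varepsilon)$ its relative interior. The ambient space has dimension $r(G)=|V|-k(G)$ because tensions are cut out by one linear relation per independent circuit of $G$. Full-dimensionality follows by exhibiting a tension in the open unit cube; for $\varepsilon \in A\mathcal{O}(G)$ one may take the coboundary of any strictly increasing vertex potential in a topological order compatible with $\varepsilon$, rescaled into $(0,1)$. Consequently
\[
\bar\tau_{\Bbb Z}(G,\varepsilon;t) = L\bigl(\bar\Delta^+_{\TN}(G,\varepsilon),t\bigr), \sp \tau_{\Bbb Z}(G,\varepsilon;t) = L\bigl(\Delta^+_{\TN}(G,\varepsilon),t\bigr),
\]
and both are polynomials of degree $r(G)$, since integral tensions $f$ with $0\le f\le q$ (resp.\ $1\le f\le q-1$) are exactly the lattice points of $q\bar\Delta^+_{\TN}$ (resp.\ $q\Delta^+_{\TN}$).

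With these identifications, (\ref{Integral-TNP-Sum-Closed}) is immediate from the definition of $\bar\tau_{\Bbb Z}(G,q)$. For (\ref{Integral-TNP-Sum-Open}), fix a reference orientation $\varepsilon_0$. Given a nowhere-zero integral $q$-tension $f$ of $(G,\varepsilon_0)$, define $\varepsilon^f$ by reversing $\varepsilon_0$ on edges where $f(x)<0$ and set $g(x)=|f(x)|$. Expanding the coupling $[\varepsilon_0,\varepsilon_C]=[\varepsilon_0,\varepsilon^f]\cdot[\varepsilon^f,\varepsilon_C]$ on any directed circuit shows that $g$ is a tension of $(G,\varepsilon^f)$ with $1\le g\le q-1$. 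The key combinatorial lemma is that $\varepsilon^f$ is acyclic: a directed circuit $(C,\varepsilon_C)$ in $\varepsilon^f$ would force $\sum_{x\in C}g(x)=0$ via (\ref{Tension-Definition}), contradicting $g>0$ on $C$. The inverse, which reattaches signs via $[\varepsilon^f,\varepsilon_0]$, is evident, so the nowhere-zero $q$-tensions partition over $A\mathcal{O}(G)$, giving (\ref{Integral-TNP-Sum-Open}).

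Finally, the orientation-wise reciprocity (\ref{Epsilon-Integral-TNP-RL}) is Ehrhart--Macdonald applied to the $r(G)$-dimensional lattice polytope $\bar\Delta^+_{\TN}(G,\varepsilon)$, and (\ref{TNP-RL}) follows by summing over $\varepsilon\in A\mathcal{O}(G)$ using (\ref{Integral-TNP-Sum-Open}) and (\ref{Integral-TNP-Sum-Closed}). For the specializations, $\bar\tau_{\Bbb Z}(G,\varepsilon;0)=L(\bar\Delta^+_{\TN}(G,\varepsilon),0)=1$ is the standard fact that a nonempty lattice polytope has Ehrhart constant term $1$; then (\ref{Epsilon-Integral-TNP-RL}) gives $(-1)^{r(G)}\tau_{\Bbb Z}(G,\varepsilon;0)=1$, and summing over $\varepsilon$ yields $|\tau_{\Bbb Z}(G,0)|=|A\mathcal{O}(G)|$. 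I expect the main obstacle to be the acyclicity of $\varepsilon^f$ in step three, which is the combinatorial heart of the argument; the remaining steps are a systematic transport of the chromatic framework of Theorem~\ref{CP} through Ehrhart theory.
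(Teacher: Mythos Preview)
Your proposal is correct and follows essentially the same approach as the paper. The paper packages your sign-flip bijection as the involution $P_{\varrho,\varepsilon}$ (Lemma~\ref{EDT}) giving the disjoint decomposition $\Delta_{\TN}(G,\varepsilon)=\bigsqcup_{\varrho\in A\mathcal O(G)} P_{\varepsilon,\varrho}(\Delta^+_{\TN}(G,\varrho))$, and your ``key combinatorial lemma'' on acyclicity of $\varepsilon^f$ is exactly Lemma~\ref{Positive-Nonempty}; both then invoke Ehrhart--Macdonald reciprocity and the constant-term fact $L(\bar\Delta^+_{\TN},0)=1$ for the specializations.
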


To interpret the values of the modular tension polynomial
$\tau(G,t)$ at negative integers, let us define an equivalence
relation $\sim$ on $\mathcal O(G)$. Recall that a {\em cut} of $G$
is a subset $[S,T]\subseteq E(G)$, where $\{S,T\}$ is a partition of
$V$ and $[S,T]$ is the set of all edges between $S$ and $T$. Let
$(G,\varepsilon)$ be a digraph. If $[S,T]$ is a cut, we denote by
$(S,T)_\varepsilon$ the set of all edges having arrows from $S$ to
$T$, and by $(T,S)_\varepsilon$ the set of all edges having arrows
from $T$ to $S$. A {\em bond} is a minimal cut. A cut $[S,T]$ of
$(G,\varepsilon)$ is said to be {\em directed} if either
$[S,T]=(S,T)_\varepsilon$ or $[S,T]=(T,S)_\varepsilon$, i.e., all
edges of $[S,T]$ have arrows from $S$ to $T$ or all have arrows from
$T$ to $S$. A minimal directed cut is called a {\em directed bond}.
A cut is said to be {\em oriented} if it can be decomposed into a
disjoint union of directed bonds. Two orientations
$\varepsilon,\varrho\in\mathcal O(G)$ are called {\em
cut-equivalent}, denoted $\varepsilon\sim\varrho$, if the edge set
\[
E(\varepsilon\neq\varrho): =\big\{x\in
E\:|\:\varepsilon(x)\neq\varrho(x)\big\}
\]
is empty or is an oriented cut with respect to either orientation
$\varepsilon$ or $\varrho$. It turns out that $\sim$ is indeed an
equivalence relation on $\mathcal O(G)$. Moreover, if
$\varepsilon\sim\varrho$ and $\varepsilon$ is acyclic, then
$\varrho$ is also acyclic. So $\sim$ induces an equivalence relation
on the set $A\mathcal O(G)$ of acyclic orientations of $G$. Let
$[A\mathcal O(G)]$ denote the quotient set of cut-equivalence
classes of acyclic orientations of $G$. We introduce the counting
function
\[
\bar\tau(G,q): =\#\big\{([\varepsilon],f)\:|\:[\varepsilon]\in
[A\mathcal O(G)], f\in T(G,[\varepsilon];{\Bbb Z}), 0\leq f\leq
q\big\},
\]
where $T\bigl(G,[\varepsilon];{\Bbb Z}\bigr)=T(G,\varepsilon;{\Bbb Z})$ and
$\varepsilon\in[\varepsilon]$ is any representative. Our last main result is
the following theorem.

\begin{thm}\label{Modular-TNP-Decom-RL}
Let $G=(V,E)$ be a loopless graph with possible multiple edges. Then
$\tau(G,q)$ and $\bar\tau(G,q)$ are polynomial functions of degree $r(G)$ in
positive integers $q$, and can be written as
\begin{gather}
\tau(G,t) = \sum_{\varepsilon\in[A\mathcal O(G)]} \tau_{\Bbb
Z}(G,\varepsilon;t), \label{TGT}
\\
\bar\tau(G,t) = \sum_{\varepsilon\in[A\mathcal O(G)]} \bar\tau_{\Bbb
Z}(G,\varepsilon;t), \label{BTGT}
\end{gather}
and satisfy the {\em reciprocity law}
\begin{gather}
\tau(G,-t)=(-1)^{r(G)}\bar\tau(G,t).
\end{gather}
In particular, $(-1)^{r(G)}\tau(G,0)$ counts the number of cut-equivalence
classes of acyclic orientations of $G$.
\end{thm}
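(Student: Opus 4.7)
The plan is to establish the two sum formulas (\ref{TGT}) and (\ref{BTGT}) directly, after which polynomiality, the reciprocity law, and the interpretation of $\tau(G,0)$ will follow formally from Theorem \ref{Integral-TNP}. The only substantial new work is (\ref{TGT}); (\ref{BTGT}) amounts to a cut-equivalence invariance check plus the definition.

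To justify (\ref{BTGT}), I would first show that $\bar\tau_{\Bbb Z}(G,\varepsilon;t)$ depends only on the cut-equivalence class $[\varepsilon]$. If $\varepsilon\sim\varrho$ differ on an oriented cut $F\subseteq E$, then the indicator $\chi_F$ lies in $T(G,\varrho;{\Bbb Z})$ as the sum of the coboundaries of the directed bonds composing $F$. The map that negates the coordinates on $F$ and then translates by $t\chi_F$ is a lattice-preserving bijection from the lattice points of $t\bar\Delta^+_{\TN}(G,\varepsilon)$ onto those of $t\bar\Delta^+_{\TN}(G,\varrho)$. By transitivity of cut-equivalence, the Ehrhart polynomial $\bar\tau_{\Bbb Z}(G,\varepsilon;t)$ is an invariant of $[\varepsilon]$, and (\ref{BTGT}) is then a direct restatement of the defining counting function of $\bar\tau(G,q)$.

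The heart of the proof is (\ref{TGT}). I would construct a bijection between the nowhere-zero modular $q$-tensions of $G$ and the disjoint union, over classes $[\varepsilon]\in[A\mathcal O(G)]$, of the interior lattice points of $q\Delta^+_{\TN}(G,\varepsilon)$ for one chosen representative per class. Since $G$ is loopless, every tension is the coboundary of a vertex potential, so a nowhere-zero modular $q$-tension $g$ on a reference orientation $\varepsilon_0$ equals $\delta\bar\phi$ for some $\bar\phi\colon V\to{\Bbb Z}/q{\Bbb Z}$, unique up to constants per connected component. Lift $\bar\phi$ coordinatewise to $\phi\colon V\to\{0,1,\dots,q-1\}$, set $f=\delta\phi$, and orient each edge from the smaller to the larger $\phi$-value. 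The nowhere-zero hypothesis forces $\phi(u)\neq\phi(v)$ along every edge, so $0<|f(x)|<q$ everywhere, and the induced orientation $\varepsilon(\phi)$ is acyclic because $\phi$ strictly increases along every directed edge; thus $(\varepsilon(\phi),|f|)$ is a positive integer tension on an acyclic orientation. The critical step, which I expect to be the main obstacle, is verifying that the class $[\varepsilon(\phi)]$ depends only on $g$. Shifting $\bar\phi$ by a constant $c\in{\Bbb Z}/q{\Bbb Z}$ alters $\phi$ by $+c$ on vertices with $\phi<q-c$ and by $c-q$ on the ``wraparound'' set $W_c=\{v:\phi(v)\geq q-c\}$; a direct computation shows that $f$ then changes by $\pm q$ precisely on the cut $[W_c,V\setminus W_c]$. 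Since the potential jumps monotonically across this cut (downward in the old orientation, upward in the new), the cut is directed in both $\varepsilon(\phi)$ and the shifted orientation, hence is an oriented cut, and so the shifted orientation is cut-equivalent to $\varepsilon(\phi)$. Conversely, for any $\varepsilon$ cut-equivalent to $\varepsilon(\phi)$, the negate-and-translate bijection of the previous paragraph, applied to an oriented cut witnessing the equivalence, produces the unique positive integer tension on $(G,\varepsilon)$ reducing to $g$.

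The remaining statements are then formal. Each summand is the Ehrhart polynomial of a polytope of dimension $r(G)$, so $\tau(G,t)$ and $\bar\tau(G,t)$ are polynomials of degree $r(G)$. Applying the per-orientation reciprocity (\ref{Epsilon-Integral-TNP-RL}) term by term in (\ref{TGT}) and resumming by (\ref{BTGT}) yields $\tau(G,-t)=(-1)^{r(G)}\bar\tau(G,t)$. Finally, since the closed Ehrhart polynomial of any nonempty lattice polytope equals $1$ at $t=0$, (\ref{BTGT}) gives $\bar\tau(G,0)=|[A\mathcal O(G)]|$, and the reciprocity law then yields $(-1)^{r(G)}\tau(G,0)=|[A\mathcal O(G)]|$.
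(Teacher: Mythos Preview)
Your overall strategy mirrors the paper's: reduce everything to (\ref{TGT}), then read off (\ref{BTGT}), reciprocity, and the value at $0$ from Theorem~\ref{Integral-TNP}. The paper, however, works directly with integral tensions and the involutions $P_{\varrho,\varepsilon}$, $Q_{\varrho,\varepsilon}$ rather than through vertex potentials: it shows (Lemma~\ref{Induced-Orientation-C-equivalent}) that \emph{any} two nowhere-zero integral $q$-tensions with the same reduction mod $q$ induce cut-equivalent orientations, then (Lemma~\ref{Minverse} and Proposition~\ref{0-1-cut-equiv}) that the $\Mod_q$-fiber over a nowhere-zero modular tension is parametrized bijectively by the orientations in a single cut-equivalence class. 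A double count of the disjoint decomposition of $T_{\rm nz\Bbb Z}(G,\varepsilon;q)$ then gives (\ref{TGT}).

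Your bijection argument has a genuine gap at surjectivity. You verify that different \emph{potential} lifts of $g$ (those of the form $\delta\phi$ with $\phi\colon V\to\{0,\dots,q-1\}$) yield cut-equivalent orientations; but not every nowhere-zero integral $q$-tension lift of $g$ is a potential lift of this kind (on the path $v_0\to v_1\to v_2$ with constant tension $q-1$, the potential must span $2(q-1)$). To see that an arbitrary pair $([\varepsilon],h)$ is in the image, you must compare the potential lift $\delta\phi$ with the lift $P_{\varepsilon_0,\varepsilon}h$ and conclude $\varepsilon(\phi)\sim\varepsilon$; this is precisely Lemma~\ref{Induced-Orientation-C-equivalent}, which your shift computation does not cover. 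The fix is easy and uses the same mechanism you already deployed: two integral $q$-tension lifts of $g$ differ by a $\{-q,0,q\}$-valued function, and subtracting the two tension equations along any directed circuit shows that the set where they differ satisfies the criterion of Proposition~\ref{Directed-Cut-Character}, hence is an oriented cut. Once you prove this general statement and drop the detour through potentials, your argument and the paper's coincide; the potential picture has the minor advantage that the cut $[W_c,V\setminus W_c]$ is visibly directed, while the paper's version stays entirely inside the tension lattice and needs no auxiliary choice of $\phi$.
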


After searching literature on tension polynomials, we found that formulas
equivalent to (\ref{Integral-TNP-Sum-Open}) and (\ref{TGT}) were observed by
Kochol \cite{Kochol1} in different form with incomplete proof. It is
interesting that certain bounds were obtained in \cite{Kochol1} for the modular
and integral tension polynomials $\tau(G,q)$ and $\tau_{\Bbb Z}(G,q)$. However,
 the interpretations for the values of the polynomials at zero and negative integers
are not considered there.

\begin{cor}
Let $T(G;x,y)$ be the Tutte polynomial of a graph $G$. If $G$ has no loops,
then
\begin{equation}
T(G;t,0)=\bar\tau(G,t-1)=(-1)^{r(G)}\tau(G,1-t).
\end{equation}
Moreover, $T(G;1,0)=(-1)^{r(G)}\tau(G,0)=\bar\tau(G,0)$ counts the number of
cut-equivalence classes of acyclic orientations of $G$.
\end{cor}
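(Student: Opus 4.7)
The plan is to derive the corollary by combining two classical identities with the reciprocity law of Theorem~\ref{Modular-TNP-Decom-RL}. No new ideas are required beyond algebraic manipulation.

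First I would recall two standard identities for a loopless graph $G$. The chromatic polynomial factors as
\[
\chi(G,q) = q^{k(G)}\,\tau(G,q),
\]
via the tension--coloring bijection: a proper $q$-coloring $c$ determines a nowhere-zero ${\Bbb Z}_q$-tension $\delta c$ by taking differences along oriented edges, and each such tension is the image of exactly $q^{k(G)}$ colorings, obtained by shifting $c$ by an additive constant on each connected component. On the other hand, the classical Whitney--Tutte evaluation reads
\[
\chi(G,q) = (-1)^{r(G)}\,q^{k(G)}\,T(G;1-q,0).
\]
Cancelling the factor $q^{k(G)}$ and substituting $q = 1-t$ gives
\[
T(G;t,0) = (-1)^{r(G)}\,\tau(G,1-t).
\]

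Next I would invoke the reciprocity law $\tau(G,-s) = (-1)^{r(G)}\bar\tau(G,s)$ from Theorem~\ref{Modular-TNP-Decom-RL}, taken at $s = t-1$, to obtain $\tau(G,1-t) = (-1)^{r(G)}\,\bar\tau(G,t-1)$. Combining with the previous display yields
\[
T(G;t,0) = \bar\tau(G,t-1) = (-1)^{r(G)}\,\tau(G,1-t),
\]
which is the first part of the corollary. Specializing at $t = 1$ gives $T(G;1,0) = \bar\tau(G,0) = (-1)^{r(G)}\tau(G,0)$, and the last sentence of Theorem~\ref{Modular-TNP-Decom-RL} identifies this common value with the number of cut-equivalence classes of acyclic orientations of $G$.

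There is essentially no obstacle: once Theorem~\ref{Modular-TNP-Decom-RL} is in hand, the corollary is pure substitution. The only point worth flagging is that the factorization $\chi(G,q) = q^{k(G)}\tau(G,q)$ is not explicitly recorded earlier in the paper, so I would either cite a standard reference such as \cite{Bollobas1} or spend a single line on the tension--coloring bijection sketched above.
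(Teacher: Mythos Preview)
Your proposal is correct and follows the same route as the paper: combine $\chi(G,t)=t^{k(G)}\tau(G,t)$ with the Whitney--Tutte evaluation $\chi(G,t)=(-1)^{r(G)}t^{k(G)}T(G;1-t,0)$, then apply the reciprocity of Theorem~\ref{Modular-TNP-Decom-RL}. Your one flagged concern is unnecessary, since the factorization $\chi(G,t)=t^{k(G)}\tau(G,t)$ is already recorded in the paper as \eqref{Chromatic-Tension-Polynomial-Relation}.
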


At the end we remark that the field $GF(4)$ in Veblen's equivalent formulation
\cite{Veblen1} for FCC can be replaced by any abelian group of 4 elements,
i.e., every planar graph $G=(V,E)$ with $V=\{1,2,\ldots,n\}$ can be properly
colored with 4 colors if and only if the system of linear inequalities
\[
x_i-x_j\neq 0, \sp (i,j)\in E
\]
has a solution over an abelian group of 4 elements. For an arbitrary
graph $G$, the chromatic number of $G$ is the minimal order of an
abelian group such that the above system of linear inequalities has
a solution. The fact that the field can be replaced by any abelian
group is that graphs are torsion free or their torsion is 1.

\section{Valuations on abelian groups}

Let $\Omega$ be a finitely generated abelian group, not necessarily
finite and free. By a {\em flat} of $\Omega$ we mean a coset of a
subgroup of $\Omega$. Let $\Gamma\subseteq\Omega$ be a subgroup. Let
$\Tor(\Gamma)$ denote the torsion subgroup of $\Gamma$. The {\em
size} of $\Gamma$ is defined as
\[
|\Gamma|: =|\Tor(\Gamma)|\,t^{\rk(\Gamma)}.
\]
If $\Gamma$ is finite, then $|\Gamma|$ is the number of elements of
$\Gamma$. For each coset $F$ in $\Omega$, the {\em rank}, the {\em
torsion subgroup}, and the {\em size} of $F$ are defined as the
rank, the torsion subgroup, and the size of $\Gamma$, respectively.
For any subset $X\subseteq\Omega$ and $a\in\Omega$, we define the
translate
\[
a+X: =\big\{a+x\:|\: x\in X\big\}.
\]

Let ${\mathscr L}(\Omega)$ denote the set of all cosets in $\Omega$,
called the {\em semilattice} of cosets in $\Omega$. An {\em affine
subgroup arrangement} (or just {\em arrangement}) of $\Omega$ is a
collection $\mathcal A$ of finitely many flats of $\Omega$. Let
$\mathcal A$ be an arrangement of $\Omega$. Let ${\mathscr
L}({\mathcal A})$ be the set of all nonempty intersections of flats
in $\mathcal A$, called the {\em semilattice} of $\mathcal A$.
Notice that ${\mathscr L}({\mathcal A})$ is closed under
intersection. We introduce the {\em characteristic polynomial} of
$\mathcal A$ as
\[
\chi(\mathcal A,t) = \sum_{X\in\mathscr L(\mathcal A)}
\frac{\mu(X,\Omega)|\Tor(\Omega)|}{|\Tor(\Omega/\langle X\rangle)|}
\,t^{\rk\langle X\rangle},
\]
where $\mu$ is the M\"{o}bius function of the poset $\mathscr
L({\mathcal A})$, whose partial order is the set inclusion, and
$\langle X\rangle=\{x-y\:|\:x,y\in X\}$. Let ${\mathscr B}(\Omega)$
denote the Boolean algebra generated by ${\mathscr L}(\Omega)$,
i.e., every element of $\mathscr B(\Omega)$ is obtained from cosets
of $\Omega$ by taking unions, intersections, and complements
finitely many times.

A {\em valuation} on $\Omega$ with values in an abelian group $A$ is
a map $\nu:\mathscr B(\Omega)\rightarrow A$ such that
\begin{gather}
\nu(\emptyset)=0, \\
\nu(X\cup Y)=\nu(X)+\nu(Y)-\nu(X\cap Y)
\end{gather}
for $X,Y\in{\mathscr B}(\Omega)$. A valuation $\nu$ on $\Omega$ is
said to be {\em translate-invariant} if for any $a\in\Omega$ and
$X\in{\mathscr B}(\Omega)$,
\begin{gather}
\nu(a+X)=\nu(X);
\end{gather}
and $\nu$ is said to satisfy {\em productivity} if
\begin{gather}
\nu(A+B)=\nu(A)\nu(B)
\end{gather}
for subgroups $A,B\subset\Omega$ such that $A+B$ is a direct sum and
$A+B$ is a direct summand of $\Omega$.

Let $X\subseteq\Omega$ be a subset; its {\em indicator function}
$1_X:\Omega\rightarrow{\Bbb Z}$ is defined by
\[
1_X(x)=\left\{\begin{array}{ll} 1 & \mbox{if}\; x\in X,\\
0 & \mbox{if}\; x\not\in X.
\end{array}\right.
\]
Let $\mathscr F(\Omega)$ denote the abelian group of all functions
$f:\Omega\rightarrow{\Bbb Z}$ such that $f(\Omega)$ is a finite set
and $f^{-1}(n)\in{\mathscr B}(\Omega)$ for $n\in{\Bbb Z}$. Then each
such function can be written as a linear combination
\[
f=\sum a_i1_{F_i},
\]
where $F_i$ are cosets of $\Omega$, $a_i\in{\Bbb Z}$. The {\em integral} of $f$
with respect to a valuation $\nu$ is defined as
\[
\nu(f) = \int_\Omega f{\rm d}\nu(x) =\sum a_i\,\nu(F_i)
=\sum_{n\in{\Bbb Z}} n\,\nu\bigl(f^{-1}(n)\bigr).
\]
By Groemer's Extension Theorem \cite{Groemer1}, a set function
$\nu:\mathscr L(\Omega)\rightarrow{A}$ can be extended to a
valuation $\nu:\mathscr B(\Omega)\rightarrow{A}$ if and only if the
{\em inclusion-exclusion formula}
\begin{gather}\label{Inclusion-Exclusion}
\nu(L)= \sum_{k=1}^n(-1)^{k-1} \sum_{i_1<\cdots<i_k}
\nu\bigl(L_{i_1}\cap\cdots\cap L_{i_k}\bigr)
\end{gather}
is satisfied, provided that $L,L_1,\ldots,L_n\in\mathscr L(\Omega)$
and $L=L_1\cup\cdots\cup L_n$.

\begin{thm}
For any finitely generated abelian group $\Omega$, there is a unique
translate-invariant valuation $\lambda: {\mathscr
B}(\Omega)\rightarrow{\Bbb Q}[t]$ such that the productivity is
satisfied and
\[
\lambda(\Omega) =|\Tor(\Omega)|t^{\rk(\Omega)} =|\Omega|.
\]
Moreover, for any finite subset $S\subseteq\Omega$ and subgroup
$\Gamma\subseteq\Omega$, we have
\[
\lambda(S)=|S|, \sp \lambda(\Gamma)=\frac{|\Omega|}{|\Omega/\Gamma|}.
\]
\end{thm}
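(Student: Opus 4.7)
The plan is to construct $\lambda$ first on the coset semilattice $\mathscr L(\Omega)$, then extend it to the Boolean algebra $\mathscr B(\Omega)$ via Groemer's Extension Theorem; the main technical step will be verifying the inclusion-exclusion identity (\ref{Inclusion-Exclusion}). For every coset $a+\Gamma\in\mathscr L(\Omega)$ set $\lambda(a+\Gamma):=|\Omega|/|\Omega/\Gamma|\in{\Bbb Q}[t]$. Since the right-hand side depends only on $\Gamma$, translate-invariance on cosets is automatic, and the degenerate cases yield $\lambda(\{a\})=1$ and $\lambda(\Omega)=|\Omega|$. To check productivity, when $A\oplus B$ is a direct summand of $\Omega$ I would write $\Omega=A\oplus B\oplus C$; size is multiplicative on direct sums (ranks add, torsion orders multiply), so a direct computation gives $\lambda(A\oplus B)=|\Omega|/|C|=|A||B|=\lambda(A)\lambda(B)$, using that $\lambda(A)=|\Omega|/|B\oplus C|=|A|$ and analogously for $B$.

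The hard part is verifying Groemer's inclusion-exclusion hypothesis for $\lambda$ on $\mathscr L(\Omega)$. Given $L=L_1\cup\cdots\cup L_n$ with $L,L_i\in\mathscr L(\Omega)$, each intersection $L_{i_1}\cap\cdots\cap L_{i_k}$ is either empty or a coset of the intersection of the underlying subgroups, so (\ref{Inclusion-Exclusion}) is a well-posed identity in ${\Bbb Q}[t]$. By translate-invariance I may assume $L$ is a subgroup, and by passing to $L$ as the new ambient group I may assume $L=\Omega$. Using the decomposition $\Omega=\Tor(\Omega)\oplus{\Bbb Z}^r$, the plan is to choose a full-rank sublattice $N\subseteq{\Bbb Z}^r$ contained in the free part of every full-rank $\Gamma_i$ (the intersection of finitely many full-rank sublattices is again full-rank); every full-rank coset $L_i$ then decomposes into finitely many translates of $\Tor(\Omega)\oplus N$, so (\ref{Inclusion-Exclusion}) descends to the finite quotient $\Omega/N$, where $\lambda$ agrees up to a global factor with the counting measure and classical inclusion-exclusion applies. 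Cosets $L_i$ of rank less than $r$ cannot by themselves contribute to covering anything outside a thin subset, and their contributions to the alternating sum need to be tracked via a separate lemma showing they cancel against their intersections with the full-rank strata; this cancellation is where the subtlety lies.

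Once Groemer's theorem provides the extension $\lambda:\mathscr B(\Omega)\to{\Bbb Q}[t]$, the remaining assertions follow quickly: for finite $S$, additivity on disjoint singletons yields $\lambda(S)=|S|$, and $\lambda(\Gamma)=|\Omega|/|\Omega/\Gamma|$ is the definition. For uniqueness, any translate-invariant productive valuation $\lambda'$ with $\lambda'(\Omega)=|\Omega|$ is pinned down by productivity on $\Omega\cong\Tor(\Omega)\oplus{\Bbb Z}^{\rk\Omega}$ to satisfy $\lambda'({\Bbb Z})=t$ and $\lambda'({\Bbb Z}/m)=m$; for an arbitrary subgroup $\Gamma$, its saturation $\Gamma^s$ (the pre-image in $\Omega$ of $\Tor(\Omega/\Gamma)$) is a direct summand of $\Omega$ with $\Gamma^s/\Gamma$ finite, so productivity determines $\lambda'(\Gamma^s)=|\Omega|/|\Omega/\Gamma^s|$. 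The disjoint coset decomposition $\Gamma^s=\bigsqcup(a+\Gamma)$ together with translate-invariance then forces $\lambda'(\Gamma)=\lambda'(\Gamma^s)/[\Gamma^s:\Gamma]=|\Omega|/|\Omega/\Gamma|$. Thus $\lambda'=\lambda$ on $\mathscr L(\Omega)$, and Groemer's uniqueness extends this to $\mathscr B(\Omega)$.
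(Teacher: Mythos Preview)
Your overall architecture matches the paper's: define $\lambda$ on cosets by $\lambda(a+\Gamma)=|\Omega|/|\Omega/\Gamma|$, verify Groemer's inclusion-exclusion hypothesis, and extend. Your uniqueness argument via the saturation $\Gamma^s$ is essentially the paper's computation (the paper writes $\tilde Z_\Gamma$ for the saturation of the free part and derives the same formula). The productivity check is fine.

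The gap is exactly where you flag it: the inclusion-exclusion verification when some of the $L_i$ have rank strictly less than $\Rk L$. Your reduction to a finite quotient $\Omega/N$ only handles the full-rank cosets, and the promised ``separate lemma'' showing that the lower-rank contributions cancel is never supplied. This is not a triviality you can wave away: in $\Omega/N$ the image of a low-rank $L_i$ occupies a positive fraction of the quotient, whereas $\lambda(L_i)$ is a polynomial of strictly smaller degree, so there is no single evaluation of $t$ that makes $\lambda$ agree with counting in $\Omega/N$ across both strata simultaneously. The cancellation you need is genuinely a statement about the polynomial identity, not about any one finite specialization, and you have not given an argument for it.

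The paper sidesteps this difficulty entirely by inducting on $n$ rather than on rank. After translating so that $L$ is a subgroup, observe that at least one of the covering cosets, say $L_n$, must have $\Rk L_n=\Rk L$; set $\Gamma=L_n-a$ so that $L/\Gamma$ is finite. Every coset $F\in L/\Gamma$ other than $L_n$ itself satisfies $F=(L_1\cap F)\cup\cdots\cup(L_{n-1}\cap F)$, a covering by at most $n-1$ cosets, so the inductive hypothesis applies to each such $F$. One then rewrites both sides of (\ref{Inclusion-Exclusion}) as sums over $F\in L/\Gamma$ using the elementary identity $\lambda(L')=\sum_{F\in L/\Gamma}\lambda(L'\cap F)$ for any coset $L'\subseteq L$, and a short bookkeeping computation (splitting the index set according to whether $n\in I$ and whether $F=L_n$) finishes. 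The point is that the induction never has to isolate or separately cancel the low-rank terms; they are absorbed automatically because each $L_i\cap F$ is again a coset (or empty) inside the coset $F$, regardless of the rank of $L_i$.
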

\begin{proof}
For a subgroup $\Gamma\subseteq\Omega$, let $Z_\Gamma$ be a free
subgroup of $\Gamma$ such that $\Gamma=\Tor(\Gamma)+Z_\Gamma$ is a
direct sum. Let $Z$ be a free subgroup of $\Omega$ such that
$Z_\Gamma\subseteq Z$ and $\Omega=T+Z$ is a direct sum, where
$T=\Tor(\Omega)$. Since $\Omega=\bigsqcup_{a\in T}\big(a+Z\big)$, by
additivity and productivity we have
$\lambda(\Omega)=|T|\lambda(Z)=\lambda(T)\lambda(Z)$. Thus
$\lambda(T)=|T|$ and $\lambda(Z)=t^{\rk(Z)}$. Since $\lambda$ is
translate-invariant, it follows that $\lambda(\{0\})=1$. So
$\lambda(S)=|S|$ for any finite subset $S\subseteq\Omega$.

Let $a_1,\ldots,a_k$ be a collection of generators of $Z$. Then
\[
{\Bbb Z}a_1+{\Bbb Z}a_2 ={\Bbb Z}a_1+{\Bbb Z}(a_1+a_2) ={\Bbb Z}(a_1+a_2)+{\Bbb
Z}a_2
\]
and the sums are direct. Again by the productivity we have
\[
\lambda({\Bbb Z}a_1+{\Bbb Z}a_2) =\lambda({\Bbb Z}a_1)\lambda\big({\Bbb
Z}(a_1+a_2)\big) =\lambda\big({\Bbb Z}(a_1+a_2)\big)\lambda({\Bbb Z}a_2).
\]
Thus $\lambda({\Bbb Z}a_1)=\lambda({\Bbb Z}a_2)$. Since
$\lambda(Z)=t^{\rk(Z)}$, it follows that $\lambda({\Bbb Z}a_1)=t$.
So $\lambda(H)=t^{\rk(H)}$ for any direct summand $H$ of $Z$.

Let $\tilde Z_\Gamma =\{z\in Z\:|\: mz\in Z_\Gamma\;\mbox{for some
$m$}\}$. Then $\tilde Z_\Gamma$ is a direct summand of $\Omega$.
Since $\tilde Z_\Gamma=\bigsqcup_{[a]\in{\tilde
Z_\Gamma/Z_\Gamma}}(a+Z_\Gamma)$, we have $\lambda(\tilde
Z_\Gamma)=|\tilde
Z_\Gamma/Z_\Gamma|\lambda(Z_\Gamma)=t^{\rk(\Gamma)}$. Hence
\[
\lambda(\Gamma)=\lambda\big(\Tor(\Gamma)\big)\lambda(Z_\Gamma)
=\frac{|\Tor(\Gamma)|}{|\tilde Z_\Gamma/Z_\Gamma|}\,
t^{\rk(\Gamma)}.
\]
Notice that
\begin{eqnarray*}
|\Omega| &=& |\Tor(\Gamma)|\,|\Tor(\Omega)/\Tor(\Gamma)|\,t^{\rk(\Omega)},\\
|\Omega/\Gamma| &=& |\Tor(\Omega)/\Tor(\Gamma)|\,|\tilde
Z_\Gamma/Z_\Gamma|\,t^{\rk(\Omega)-\rk(\Gamma)}.
\end{eqnarray*}
It follows that  $\lambda(\Gamma)|=|\Omega)|/|\Omega/\Gamma|$.

Next we show that the inclusion-exclusion formula
(\ref{Inclusion-Exclusion}) is satisfied. We proceed by induction on
$n$, the number of cosets in (\ref{Inclusion-Exclusion}). For $n=1$,
it is obviously true. Assume it is true for the case $n-1$, and
consider the case $n$. Let $L,L_1,\ldots,L_n\in\mathscr L(\Omega)$,
$L=L_1\cup\cdots\cup L_n$. Without loss of generality, we may assume
that $L$ contains the zero element $0$, i.e., $L$ is a subgroup.
Notice that one of the cosets $L_1,\ldots,L_n$ must have the same
rank as $L$, say, $\Rk(L_n)=\Rk(L)$. Fix an element $a\in L_n$; we
set
\[
\Gamma:=L_n-a=\big\{x-a\:|\:x\in L_n\big\}.
\]
Then $\Gamma$ is a subgroup, and $L_n$ is a coset of $\Gamma$.
Clearly, $\Gamma$ is a subgroup of $L$ with the same rank. Thus the
quotient group $L/\Gamma$, consisting of all cosets of $\Gamma$ in
$L$, is a finite set. Notice that all cosets $F\in L/\Gamma$, except
$L_n$, are contained in $L_1\cup\cdots\cup L_{n-1}$. Hence
$F=(L_1\cap F)\cup\cdots\cup (L_{n-1}\cap F)$. By induction, for
$F\in L/\Gamma$ such that $F\neq L_n$, we have
\[
\lambda(F) = \sum_{I\subseteq[n-1],I\neq\emptyset} (-1)^{|I|-1}
\lambda\Biggl(\bigcap_{i\in I} L_i\cap F\Biggr).
\]
On the other hand, it is routine to check that for any coset $L'$ of $L$,
\[
\lambda(L')=\sum_{F\in L/\Gamma}\lambda(L'\cap F).
\]
In particular, for the cosets $L_I=\bigcap_{i\in I}L_i$ of $L$ with nonempty
$I\subseteq[n]$, we have
\[
\lambda(L_I) = \sum_{F\in L/\Gamma}\lambda(L_I\cap F).
\]
Now the right-hand side of (\ref{Inclusion-Exclusion}) can be written as
\begin{eqnarray*}
{\rm RHS} &=& \sum_{I\subseteq[n],I\neq\emptyset} (-1)^{|I|-1} \sum_{F\in
L/\Gamma} \lambda(L_I\cap F) \\
&=& \sum_{F\in L/\Gamma} \sum_{I\subseteq[n]\atop I\neq\emptyset} (-1)^{|I|-1}
\lambda(L_I\cap F) \\
&=& \left\{\sum_{F\in L/\Gamma\atop F=L_n} + \sum_{F\in L/\Gamma\atop F\neq
L_n}\right\} \left\{\sum_{I\subseteq[n]\atop n\in I} + \sum_{I\subseteq[n]\atop
I\neq\emptyset,n\not\in I}\right\}.
\end{eqnarray*}
Clearly, the RHS is decomposed into the following four sums:
\begin{align}
\sum_{F\in L/\Gamma\atop F=L_n} \sum_{I\subseteq[n]\atop n\in I} & =
\sum_{I\subseteq[n-1]} (-1)^{|I|} \lambda(L_I\cap L_n) \nonumber\\
& =  \lambda(L_n)+\sum_{I\subseteq[n-1],\,I\neq\emptyset} (-1)^{|I|}
\lambda(L_I\cap L_n);\nonumber
\end{align}
\begin{align}
 \sum_{F\in L/\Gamma\atop F\neq L_n} \sum_{I\subseteq[n]\atop n\in I}
& = 0 \sp \mbox{(since $L_I\cap F=\emptyset$)}; \nonumber
\end{align}
\begin{align}
\sum_{F\in L/\Gamma\atop F=L_n} \sum_{I\subseteq[n]\atop
I\neq\emptyset,n\not\in I} & = \sum_{I\subseteq[n-1],\,I\neq\emptyset}
(-1)^{|I|-1} \lambda\bigl(L_I\cap L_n\bigr); \nonumber
\end{align}
\begin{align}
\sum_{F\in L/\Gamma\atop F\neq L_n} \sum_{I\subseteq[n]\atop
I\neq\emptyset,n\not\in I} & = \sum_{F\in L/\Gamma\atop F\neq L_n}
\sum_{I\subseteq[n-1]\atop
I\neq\emptyset} (-1)^{|I|-1} \lambda\bigl(L_I\cap F\bigr) \nonumber \\
& = \sum_{F\in L/\Gamma\atop F\neq L_n} \lambda(F) =\lambda(L)-\lambda(L_n).
\nonumber
\end{align}
It follows that the right-hand side of (\ref{Inclusion-Exclusion}) equals
$\lambda(L)$, as desired. The uniqueness is obvious.
\end{proof}

\begin{thm}\label{Chracteristic-Formula-Thm}
Let $\mathcal A$ be an affine subgroup arrangement of a finitely generated
abelian group $\Omega$. Then
\begin{gather}\label{Chracteristic-Formula}
\chi(\mathcal A,t)=\lambda\Biggl(\Omega-\bigcup_{F\in\mathcal A} F\Biggr).
\end{gather}
\end{thm}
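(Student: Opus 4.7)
The plan is to combine ordinary inclusion-exclusion on the union $\bigcup_{F\in\mathcal A}F$ with a Möbius-reindexing over the semilattice, and then evaluate the resulting identity via the valuation $\lambda$ furnished by the previous theorem.

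First I would write
$$
\mathbf{1}_{\Omega\setminus\bigcup_{F\in\mathcal A}F}
= \sum_{\mathcal S\subseteq\mathcal A}(-1)^{|\mathcal S|}\,
\mathbf{1}_{\bigcap_{F\in\mathcal S}F},
$$
with the convention that the empty intersection is $\Omega$. Regrouping by the value of the intersection, and setting $c(X)=\sum_{\mathcal S:\bigcap\mathcal S=X}(-1)^{|\mathcal S|}$, this becomes
$$
\mathbf{1}_{\Omega\setminus\bigcup_{F\in\mathcal A}F}
= \sum_{X\in\hat{\mathscr L}(\mathcal A)} c(X)\,\mathbf{1}_{X},
$$
where $\hat{\mathscr L}(\mathcal A)=\mathscr L(\mathcal A)\cup\{\Omega\}$ is ordered by set inclusion.

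Second, I would identify $c(X)$ with $\mu(X,\Omega)$. For any $X\in\hat{\mathscr L}(\mathcal A)$, letting $\mathcal A_X=\{F\in\mathcal A:X\subseteq F\}$,
$$
\sum_{Y\supseteq X} c(Y)
= \sum_{\mathcal S\subseteq\mathcal A_X}(-1)^{|\mathcal S|}
= (1-1)^{|\mathcal A_X|},
$$
which vanishes unless $\mathcal A_X=\emptyset$, i.e.\ unless $X=\Omega$, in which case it equals $1$. This is exactly the defining recursion for $\mu(\,\cdot\,,\Omega)$ on $\hat{\mathscr L}(\mathcal A)$, so by the uniqueness of Möbius inversion $c(X)=\mu(X,\Omega)$.

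Third, I would apply $\lambda$ termwise using additivity. Translate-invariance collapses $\lambda(X)=\lambda(\langle X\rangle)$ for every coset $X$, and the previous theorem gives
$$
\lambda(\langle X\rangle)
= \frac{|\Omega|}{|\Omega/\langle X\rangle|}
= \frac{|\Tor(\Omega)|}{|\Tor(\Omega/\langle X\rangle)|}\, t^{\rk\langle X\rangle}.
$$
Substituting into $\lambda(\Omega\setminus\bigcup F)=\sum_X\mu(X,\Omega)\lambda(X)$ produces precisely $\chi(\mathcal A,t)$.

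The main obstacle is really only the Möbius identification in the second step; in particular I need to adjoin $\Omega$ as a formal top to $\mathscr L(\mathcal A)$ (verifying that the term $X=\Omega$ either contributes $|\Omega|$ matching $\mu(\Omega,\Omega)=1$ when $\Omega\notin\mathscr L(\mathcal A)$, or is subsumed when it is), and I need to rule out the degenerate case $\Omega\in\mathcal A$, where both sides trivially vanish. Every other step is a bookkeeping application of the valuation's additivity, translate-invariance, and productivity.
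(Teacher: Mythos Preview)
Your proof is correct, and its third step (applying $\lambda$ termwise, using translate-invariance and the formula $\lambda(\langle X\rangle)=|\Omega|/|\Omega/\langle X\rangle|$) is exactly what the paper does. The difference lies in how the identity
\[
1_{\Omega\setminus\bigcup_{F\in\mathcal A}F}=\sum_{X\in\mathscr L(\mathcal A)}\mu(X,\Omega)\,1_X
\]
is obtained. The paper does not go through subset inclusion--exclusion at all: it stratifies by setting $Y^0=Y\setminus\bigcup_{X<Y}X$, observes the partition $1_Y=\sum_{X\le Y}1_{X^0}$, and applies M\"obius inversion once to get $1_{Y^0}=\sum_{X\le Y}\mu(X,Y)\,1_X$; specializing to $Y=\Omega$ yields the identity directly. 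Your route instead expands over all subfamilies $\mathcal S\subseteq\mathcal A$, regroups by the common intersection, and then identifies the resulting coefficient $c(X)$ with $\mu(X,\Omega)$ via the summation recursion---essentially a crosscut/Whitney argument. The paper's approach is shorter and avoids the bookkeeping about $\hat{\mathscr L}(\mathcal A)$ and the degenerate case $\Omega\in\mathcal A$ (note, incidentally, that in the paper's convention $\Omega$ already belongs to $\mathscr L(\mathcal A)$ as the empty intersection). Your approach has the advantage of making the link between naive inclusion--exclusion and the M\"obius formulation explicit. Both are standard and equally valid.
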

\begin{proof}
For each coset $Y\in\mathscr L(\mathcal A)$, let $Y^0=Y-\bigcup_{X<Y}X$. Then
$\bigl\{X^0\:|\:X\in\mathscr L(\mathcal A)\bigr\}$ is a collection of disjoint
subsets. Since $Y=\bigsqcup_{X\leq Y}X^0$ for all $Y\in{\mathscr L}({\mathcal
A})$, we have
\[
1_Y=\sum_{X\leq Y} 1_{X^0}, \sp Y\in \mathscr L(\mathcal A).
\]
Applying the M\"{o}bius inversion, we obtain
\[
1_{Y^0} = \sum_{X\leq Y}\mu(X,Y)\,1_X, \sp Y\in \mathscr L(\mathcal A).
\]
In particular, for $Y=\Omega$, we have $\Omega^0=\Omega-\bigcup_{X\in\mathcal
A} X$, and
\[
1_{\Omega^0}= \sum_{X\in\mathscr L(\mathcal A)} \mu(X,\Omega)\,1_X.
\]
Applying the valuation $\lambda$ to both sides, we see that
\begin{eqnarray*}
\lambda(\Omega^0) &=& \sum_{X\in\mathscr L(\mathcal A)} \mu(X,\Omega)
\lambda(X) \\
&=& \sum_{X\in\mathscr L(\mathcal A)}
\mu(X,\Omega)\cdot\frac{|\Omega|}{|\Omega/\langle X\rangle|} \\
&=& \sum_{X\in\mathscr L(\mathcal A)}
\frac{\mu(X,\Omega)|\Tor(\Omega)|}{|\Tor(\Omega/\langle X\rangle)|} \,t^{\rk\langle X\rangle} \\
&=& \chi(\mathcal A,t).
\end{eqnarray*}
\end{proof}

\begin{rmk}
When $\Omega$ is a finite-dimensional vector space, the unique
translate-invariant valuation $\lambda$ was obtained by Ehrenborg
and Readdy \cite{Ehrenborg-Readdy1}; see also Chen
\cite{Chen-Infinite-Subspace-Arrangement} for the generalization to
infinite-dimensional vector spaces.
\end{rmk}

\section{Modular chromatic and tension polynomials}

Let $G=(V,E)$ be a loopless graph with possible multiple edges. Let
$A$ be an abelian group. A {\em coloring} of $G$ with the color set
$A$ is a function $f:V\rightarrow A$; $f$ is said to be {\em proper}
if $f(u)\neq f(v)$ for any adjacent vertices $u,v$. We denote by
$K(G,A)$ the set of all colorings of $G$, and by $K_{\rm nz}(G,A)$
the set of all proper colorings. If $|A|=q$ is finite, it is
well-known that the counting function
\begin{gather}\label{Chromatic-Poly}
\chi(G,q):=\big|K_{\rm nz}(G,A)\big|
\end{gather}
is a polynomial function of $q$, depending only on the order of $A$, not on the
group structure; $\chi(G,t)$ is called the {\em chromatic polynomial} of $G$.

Let $\varepsilon$ be an orientation of $G$. We denote by $T(G,\varepsilon;A)$
the abelian group of all tensions of the digraph $(G,\varepsilon)$ with values
in $A$, called the {\em tension group} of $(G,\varepsilon)$, and by $T_{\rm
nz}(G,\varepsilon;A)$ the set of all nowhere-zero tensions. If $A$ is finite,
we shall see that $|T(G,\varepsilon;A)|$ and $|T_{\rm nz}(G,\varepsilon;A)|$
depend only on the order of $A$, but not on the abelian group structure. So,
for $|A|=q$, we define the counting function
\[
\tau(G,q):=\bigl|T_{\rm nz}(G,\varepsilon;A)\bigr|.
\]
We shall see that $\tau(G,q)$ is a polynomial function of positive integers
$q=|A|$, and is independent of the orientation $\varepsilon$ and the abelian
group structure of $A$.

Note that a coloring of $G$ may be viewed as a potential function on
$G$. There is natural {\em difference operator}
$\delta:A^V\rightarrow A^E$ defined by
\begin{gather}
(\delta f)(x)=f(u)-f(v),
\end{gather}
where $x=uv$ is an edge with the orientation
$u\xrightarrow{\varepsilon}v$. When a graph is viewed as a
1-dimensional simplicial complex, the difference operator $\delta$
is known as {\em coboundary operator}, and the tension group
$T(G,\varepsilon;A)$ is known as {\em cohomology group}. The
following Proposition~\ref{Potential-Tension} and its corollary
state the relation between colorings and tensions; see
\cite{Berge,Bondy-Murty1}.

\begin{prop}\label{Potential-Tension}
\begin{enumerate}
\item[(a)] $\Im\delta=T(G,\varepsilon;A)$.

\item[(b)] $\delta:K(G,A)\rightarrow T(G,\varepsilon;A)$ is a group homomorphism
with $\Ker\delta\simeq A^{k(G)}$, where $k(G)$ is the number of
connected components of $G$.

\item[(c)] The restriction $\delta:K_{\rm nz}(G,A)\rightarrow T_{\rm
nz}(G,\varepsilon;A)$ is well defined.
\end{enumerate}
\end{prop}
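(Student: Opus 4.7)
The plan is to prove the three assertions in order, with (a) requiring the most work and splitting into two containments; parts (b) and (c) are short verifications.

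For (a), I will first establish the containment $\Im\delta\subseteq T(G,\varepsilon;A)$. Given $f\in K(G,A)$ and any directed circuit $(C,\varepsilon_{\textsc c})$ with vertex sequence $v_0\to v_1\to\cdots\to v_{\ell}=v_0$ along $\varepsilon_{\textsc c}$, I will evaluate $\sum_{x\in C}[\varepsilon,\varepsilon_{\textsc c}](x)(\delta f)(x)$ edge by edge. If the edge $x_i=v_{i-1}v_i$ carries $\varepsilon(x_i)=\varepsilon_{\textsc c}(x_i)$ then $(\delta f)(x_i)=f(v_{i-1})-f(v_i)$ and $[\varepsilon,\varepsilon_{\textsc c}](x_i)=1$; if they disagree, both signs flip. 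In either case the contribution is $f(v_{i-1})-f(v_i)$, and the total telescopes to $0$, so $\delta f$ satisfies \eqref{Tension-Definition}.

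For the reverse containment $T(G,\varepsilon;A)\subseteq\Im\delta$, I will construct a preimage explicitly. Fix a basepoint $v_c$ in each connected component $G_c$ of $G$ and set $f(v_c)=0$. For any other vertex $w\in G_c$ choose a walk $P$ from $v_c$ to $w$ in the underlying undirected graph, and define
\[
f(w)=\sum_{x\in P}[\varepsilon,\varepsilon_P](x)\,g(x),
\]
where $\varepsilon_P$ orients $x$ in the direction the walk traverses it. The key point is well-definedness: any two such walks $P_1,P_2$ differ by a closed walk $P_1\cdot P_2^{-1}$, whose edge-multiset decomposes into directed circuits (plus possibly edges traversed once in each direction, which cancel), so the tension condition forces the two values to coincide. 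A direct check then gives $\delta f = g$. This well-definedness step is the main obstacle, and it is where the hypothesis that $g$ vanishes on every directed circuit is used in full; the cleanest way is to reduce any closed walk to a signed sum of directed circuits by induction on its length.

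For (b), additivity $\delta(f_1+f_2)=\delta f_1+\delta f_2$ is immediate from the definition, so $\delta$ is a homomorphism. A function $f$ lies in $\Ker\delta$ iff $f(u)=f(v)$ whenever $uv\in E$, which is equivalent to $f$ being constant on each connected component. Choosing the constant value on each of the $k(G)$ components gives an isomorphism $\Ker\delta\simeq A^{k(G)}$. For (c), if $f\in K_{\rm nz}(G,A)$ and $x=uv\in E$, then by properness $f(u)\neq f(v)$, so $(\delta f)(x)=f(u)-f(v)\neq 0$; combined with (a), this shows $\delta f\in T_{\rm nz}(G,\varepsilon;A)$ and so the restriction is well defined.
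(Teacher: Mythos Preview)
Your argument is correct and follows essentially the same route as the paper's: both construct a preimage of a given tension by fixing a basepoint in each component and summing along paths, with the paper using shortest paths and declaring well-definedness ``routine'' while you spell out the closed-walk decomposition (and you also make the easy inclusion $\Im\delta\subseteq T(G,\varepsilon;A)$ explicit). One small slip: with your sign convention the ``direct check'' actually yields $\delta f=-g$ rather than $\delta f=g$, so either replace $f$ by $-f$ or insert a minus sign in the defining sum (as the paper does); this does not affect the argument.
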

\begin{proof}
(a) Let $\tilde f\in T(G,\varepsilon;A)$. We construct a coloring $f\in K(G,A)$
as follows. Let $G_0$ be a connected component of $G$. Fix a vertex $v_0\in
G_0$ and an element $a\in A$. For each $v\in V(G_0)$, let $P=v_0v_1\ldots v_n$
be a shortest path from $v_0$ to $v=v_n$, and let $\varrho$ be an orientation
of $P$ such that $v_{i-1}\xrightarrow{\varrho}v_i$. We define $f(v_0)=a$, and
\[
f(v) = a-\sum_{i=1}^n[\varepsilon,\varrho](v_{i-1},v_i)\tilde f(v_{i-1},v_i).
\]
It is routine to check that $f$ is well-defined. Moreover, $f$ is proper if and
only if $\tilde f$ is nowhere-zero. Since $a$ is arbitrarily given for each
component, we see that $\Ker(d)\simeq A^{k(G)}$.

(b) and (c) are trivial by the construction of $f$ from $\tilde f$.
\end{proof}

\begin{cor}
The chromatic polynomial $\chi(G,t)$ and the modular tension polynomial
$\tau(G,t)$ are related by
\begin{gather}\label{Chromatic-Tension-Polynomial-Relation}
\chi(G,t)=t^{k(G)}\tau(G,t),
\end{gather}
where $k(G)$ is the number of connected components of $G$.
\end{cor}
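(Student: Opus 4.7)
The plan is to derive the identity by counting the fibers of the difference operator $\delta$ restricted to proper colorings, using Proposition~\ref{Potential-Tension} as the main tool. Fix any orientation $\varepsilon$ of $G$ and an abelian group $A$ with $|A|=q$. The strategy is to show that $\delta:K_{\rm nz}(G,A)\to T_{\rm nz}(G,\varepsilon;A)$ is a surjection whose fibers all have size $q^{k(G)}$, then read off the polynomial identity.

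First I would verify surjectivity of the restricted map. Part (c) of Proposition~\ref{Potential-Tension} gives well-definedness. For surjectivity, take $\tilde f\in T_{\rm nz}(G,\varepsilon;A)$; by part (a) there exists $f\in K(G,A)$ with $\delta f=\tilde f$. Since $\tilde f$ is nowhere-zero, $f(u)-f(v)=\tilde f(x)\neq 0$ for every edge $x=uv$, so $f$ is proper, i.e. $f\in K_{\rm nz}(G,A)$. Conversely, any other preimage of $\tilde f$ in $K(G,A)$ differs from $f$ by an element of $\Ker\delta$, and a coloring that differs from a proper coloring by a constant on each connected component is itself proper; so the full $\delta$-fiber of $\tilde f$ in $K(G,A)$ lies inside $K_{\rm nz}(G,A)$.

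Next I would invoke the kernel calculation in part (b): $\Ker\delta\simeq A^{k(G)}$, so every fiber of $\delta$ has exactly $|A|^{k(G)}=q^{k(G)}$ elements. Combining this with surjectivity yields
\begin{equation*}
|K_{\rm nz}(G,A)| \;=\; q^{k(G)}\,|T_{\rm nz}(G,\varepsilon;A)|.
\end{equation*}
By the definitions (\ref{Chromatic-Poly}) and $\tau(G,q)=|T_{\rm nz}(G,\varepsilon;A)|$, this reads $\chi(G,q)=q^{k(G)}\tau(G,q)$ for every positive integer $q$. Since $\chi(G,t)$ and $t^{k(G)}\tau(G,t)$ are polynomials in $t$ agreeing at infinitely many values, they are equal as polynomials, giving (\ref{Chromatic-Tension-Polynomial-Relation}).

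There is no real obstacle here beyond the surjectivity check on the restricted map, which the proposition almost but not quite states. The whole argument is essentially the observation that $\delta$ descends to a principal $A^{k(G)}$-bundle from proper colorings to nowhere-zero tensions.
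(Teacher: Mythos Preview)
Your argument is correct and is exactly the fiber-counting that the paper's one-line proof (``It follows from (b) and (c) of Proposition~\ref{Potential-Tension}'') leaves to the reader. You have made explicit the one point the proposition does not quite state, namely surjectivity of the restricted map $\delta:K_{\rm nz}(G,A)\to T_{\rm nz}(G,\varepsilon;A)$; this is implicit in the proof of Proposition~\ref{Potential-Tension}, where it is noted that the constructed preimage $f$ is proper if and only if $\tilde f$ is nowhere-zero.
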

\begin{proof}
It follows from (b) and (c) of Proposition~\ref{Potential-Tension}.
\end{proof}

Let $F$ be a spanning forest of $G$. For each edge $e\in F$, let
$B_e$ be the unique bond, whose edge set consists of the edge $e$
and the edges $x$ of $F^c$ such that the circuit of $F\cup x$
contains the edge $e$. Let $\varepsilon_e$ be the direction of $B_e$
such that $\varepsilon_e(e)=\varepsilon(e)$. Then the function
$[\varepsilon,\varepsilon_e]$ is a tension of $(G,\varepsilon)$. For
any $f\in A^E$, the function $f_e:=f(e)[\varepsilon,\varepsilon_e]$
is a tension of $(G,\varepsilon)$.

\begin{lemma}\label{TN-Solution-Lemmma}
Let $F$ be a spanning forest of $G$. Then the linear system
$(\ref{Tension-Definition})$, whose equations are indexed by all circuits, is
equivalent to the linear system
\begin{equation}\label{TN-Solution-Equation}
f(x) = \sum_{e\in F} [\varepsilon,\varepsilon_e](x) f(e), \sp x\in F^c
\end{equation}
with equations indexed by edges $x\in F^c$. In other words,
$(\ref{TN-Solution-Equation})$ solves the linear system
$(\ref{Tension-Definition})$ with $|F|$ free variables.
\end{lemma}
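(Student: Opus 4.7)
The plan is to show the two linear systems define the same subset of $A^E$ via mutual containment, using the matroid duality between fundamental circuits and fundamental bonds of the spanning forest $F$. For each $x\in F^c$, denote by $C_x$ the unique circuit in $F\cup\{x\}$ with direction $\varepsilon_{C_x}$ normalized so that $\varepsilon_{C_x}(x)=\varepsilon(x)$; then $e\in C_x$ if and only if $x\in B_e$.

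The forward direction, from tension to (\ref{TN-Solution-Equation}), follows by writing out the tension equation (\ref{Tension-Definition}) for the fundamental circuit $C_x$, isolating $f(x)$, and invoking the signed duality
\[
[\varepsilon,\varepsilon_{C_x}](e)=-[\varepsilon,\varepsilon_e](x),\quad e\in C_x\cap F.
\]
Since $[\varepsilon,\varepsilon_e](x)$ vanishes for $e\in F\setminus C_x$ (because then $x\notin B_e$), the resulting sum over $e\in C_x\cap F$ extends trivially to a sum over all $e\in F$, giving (\ref{TN-Solution-Equation}).

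For the converse I would invoke the potential construction used in the proof of Proposition~\ref{Potential-Tension}. Given $f\in A^E$ satisfying (\ref{TN-Solution-Equation}), pick a root in each connected component of $G$, set its potential to $0$, and propagate along $F$ to produce a vertex function $f^V$ with $\delta f^V(e)=f(e)$ for every $e\in F$. By Proposition~\ref{Potential-Tension}(a), $g:=\delta f^V$ is a tension, so by the forward direction already proved $g$ satisfies (\ref{TN-Solution-Equation}). Since $g|_F=f|_F$, equation (\ref{TN-Solution-Equation}) forces $g(x)=f(x)$ for every $x\in F^c$, and hence $f=g\in T(G,\varepsilon;A)$.

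The main obstacle is the signed circuit--bond duality. To verify it I would delete $e$ from $F$: this splits the component containing $e$ into two pieces $U\ni u$ and $V\ni v$, where $\varepsilon(e):u\to v$, and $\varepsilon_e$ is the direction of $B_e=[U,V]$ from $U$ to $V$. Because $x\in C_x\cap B_e$, the edge $x$ crosses this partition; traversing $C_x$ in direction $\varepsilon_{C_x}$ starting with $x$ going from one side of the partition to the other, any $F$-edge of $C_x$ crossing the partition is traversed opposite to $\varepsilon_e$, which yields the claimed sign flip. Everything else is then routine, and the $|F|$ free variables in (\ref{TN-Solution-Equation}) match the known dimension $|V|-k(G)=|F|$ of the tension space, confirming the parameterization is complete.
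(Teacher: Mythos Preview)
Your argument is correct. Both directions go through: the sign identity $[\varepsilon,\varepsilon_{C_x}](e)=-[\varepsilon,\varepsilon_e](x)$ is exactly the standard circuit--bond duality for a spanning forest (your verification sketch is a little loose---the direction in which $e$ is traversed depends on which side of the cut $\varepsilon(x)$ points into, so ``opposite to $\varepsilon_e$'' is only literally true in one of the two cases---but the identity itself is right and the case analysis is routine). The converse via lifting to a potential along $F$ and invoking Proposition~\ref{Potential-Tension}(a) is clean.

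The paper takes a different route. It uses the fact, established in the paragraph preceding the lemma, that each $f_e:=f(e)[\varepsilon,\varepsilon_e]$ is already a tension. Setting $f':=f-\sum_{e\in F}f_e$, one has $f'$ a tension iff $f$ is, and $f'|_F=0$ automatically; the fundamental-circuit equation for $C_y$ then forces $f'(y)=0$ for each $y\in F^c$, so $f=\sum_e f_e$. The converse is immediate since a sum of tensions is a tension. In effect the paper absorbs your sign duality into the prior verification that $[\varepsilon,\varepsilon_e]$ is a tension, and handles both implications with a single subtraction, avoiding the detour through potentials. Your approach, by contrast, makes the circuit--bond pairing explicit and reuses the already-proved Proposition~\ref{Potential-Tension}; it is slightly longer but arguably more transparent about where the parameterization by $A^{|F|}$ comes from.
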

\begin{proof}
For $f\in A^E$, define $f':=f-\sum_{e\in F}f_e$. Then $f'$ is a tension of
$(G,\varepsilon)$ if and only if $f$ is a tension. Since
$\varepsilon_e(e)=\varepsilon(e)$, we have $f_e(e)=f(e)$. For any $x\in F$ but
$x\neq e$, we have $f_e(x)=0$ as $x\not\in B_e$. This implies that $f'\equiv 0$
on $F$.

For any $y\in F^c$, let $C_y$ be the unique circuit contained in $F\cup y$. Let
$\varepsilon_y$ be a direction of $C_y$. If $f'$ is a tension, then by
definition, $\sum_{x\in C_y}[\varepsilon,\varepsilon_y](x)f'(x)=0$; as
$f'(x)=0$ for any $x\in C_y$ but $y$, we have $f'(y)=0$. This means that
$f'\equiv 0$ on $E(G)$. Thus $f=\sum_{e\in F}f_e$.
\end{proof}

\begin{cor}\label{TGEGamma}
The tension group $T(G,\varepsilon;A)$ is isomorphic to the $A$-free abelian
group $A^{r(G)}$, where $r(G)=|V|-k(G)$. Moreover, for any subset $X\subseteq
E(G)$, the abelian group
\[
T_X(G,\varepsilon;A): =\big\{f\in T(G,\varepsilon;A)\:|\:
f(x)=0\;\mbox{\em for}\; x\in X\big\}
\]
is isomorphic to $A^{r(G)-r\langle X\rangle}$, where $\langle X\rangle=(V,X)$.
In particular, if $|A|=q$ is finite, then
\begin{equation}\label{TXGEG}
|T(G,\varepsilon;A)|=q^{r(G)}, \sp |T_X(G,\varepsilon;A)|=q^{r(G)-r\langle
X\rangle}.
\end{equation}
\end{cor}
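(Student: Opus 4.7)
The plan is to derive both statements as straightforward corollaries of Lemma \ref{TN-Solution-Lemmma} by choosing spanning forests carefully.

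First I would establish the isomorphism $T(G,\varepsilon;A)\simeq A^{r(G)}$. Fix any spanning forest $F$ of $G$; it has $|F|=|V|-k(G)=r(G)$ edges. By Lemma \ref{TN-Solution-Lemmma}, the restriction map $f\mapsto f|_F$ is well-defined from $T(G,\varepsilon;A)$ to $A^F$, is injective (since $f$ is completely determined on $F^c$ by (\ref{TN-Solution-Equation})), and is surjective (since for every choice of values on $F$, the formula (\ref{TN-Solution-Equation}) extends to a tension). This map is plainly a homomorphism because (\ref{TN-Solution-Equation}) is $A$-linear in the free variables, giving $T(G,\varepsilon;A)\simeq A^F\simeq A^{r(G)}$.

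For the subset $X\subseteq E(G)$, the strategy is to pick the spanning forest compatibly with $X$. Let $F_X$ be a spanning forest of the subgraph $\langle X\rangle=(V,X)$, so $|F_X|=r\langle X\rangle$, and extend $F_X$ to a spanning forest $F$ of $G$. The key claim I would then prove is:
\[
f\in T(G,\varepsilon;A) \text{ satisfies } f|_X\equiv 0 \iff f|_{F_X}\equiv 0.
\]
The forward direction is immediate since $F_X\subseteq X$. For the reverse, suppose $f|_{F_X}\equiv 0$ and take any $x\in X\setminus F_X$. The fundamental circuit $C_x\subseteq F_X\cup\{x\}$ lies entirely in $\langle X\rangle$, and all its edges other than $x$ belong to $F_X$. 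Choosing a direction $\varepsilon_x$ of $C_x$, the tension identity (\ref{Tension-Definition}) forces $[\varepsilon,\varepsilon_x](x)f(x)=0$, hence $f(x)=0$.

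Combining the claim with the isomorphism $T(G,\varepsilon;A)\simeq A^F$ of the first part, the subgroup $T_X(G,\varepsilon;A)$ corresponds under this isomorphism to the subgroup of $A^F$ whose components on $F_X$ vanish, which is $A^{F\setminus F_X}\simeq A^{r(G)-r\langle X\rangle}$. The cardinality statements (\ref{TXGEG}) then follow at once when $|A|=q$ is finite. I expect the only subtle point to be verifying the reverse direction of the claim above, but it reduces cleanly to the fundamental-circuit argument already packaged inside the proof of Lemma \ref{TN-Solution-Lemmma}.
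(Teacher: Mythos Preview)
Your proposal is correct and follows essentially the same approach as the paper: choose a spanning forest $F_X$ of $\langle X\rangle$, extend it to a spanning forest $F$ of $G$, and invoke Lemma~\ref{TN-Solution-Lemmma} to see that the free variables for $T_X(G,\varepsilon;A)$ are those indexed by $F\setminus F_X$. The paper's proof is considerably more terse and leaves implicit the equivalence $f|_X\equiv 0 \iff f|_{F_X}\equiv 0$ that you spell out via the fundamental-circuit argument; your observation that for $x\in X\setminus F_X$ the fundamental circuit of $x$ with respect to $F$ actually lies in $F_X\cup\{x\}$ (by uniqueness of that circuit) is exactly the missing detail, and is correctly handled.
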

\begin{proof} Let $F_X$ be a spanning forest of $X$ in the sense that each component
of $F$ is a spanning tree of a component of $G$. Then $F_X$ can be
extended to a forest $F$ of $G$. By Lemma~\ref{TN-Solution-Lemmma},
we see that $T_X(G,\varepsilon;A)$ is of rank
$|F|-|F_X|=r(G)-r\langle X\rangle$.
\end{proof}

To see that $|T_{\rm nz}(G,\varepsilon;A)|$ is independent of $\varepsilon$ and
$A$, let $\varrho$ be another orientation of $G$. We introduce an involution
$P_{\varrho,\varepsilon}:A^E\rightarrow A^E$, defined by
\begin{equation}\label{TED}
(P_{\varrho,\varepsilon}f)(x)=[\varrho,\varepsilon](x)f(x), \sp f\in A^E.
\end{equation}
Obviously, $P_{\varepsilon,\varepsilon}$ is the identity map. For orientations
$\varepsilon,\varrho,\rho$,
\begin{equation}\label{TEDTDW}
P_{\rho,\varrho}P_{\varrho,\varepsilon}=P_{\rho,\varepsilon}.
\end{equation}

\begin{lemma}\label{S-Epsilon-Rho}
The involution $P_{\varrho,\varepsilon}$ is a group isomorphism. Moreover,
\begin{gather}
P_{\varrho,\varepsilon}\bigl(T(G,\varepsilon;A)\bigr) = T(G,\varrho;A),\nonumber\\
P_{\varrho,\varepsilon}\bigl(T_{\rm nz}(G,\varepsilon;A)\bigr) = T_{\rm
nz}(G,\varrho;A). \nonumber
\end{gather}
\end{lemma}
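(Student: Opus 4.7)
The plan is to unpack the definition of $P_{\varrho,\varepsilon}$ as a coordinate-wise sign twist, show it is bijective, and then reduce the tension condition for $\varrho$ to the tension condition for $\varepsilon$ via a single sign identity on couplings.

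First I would observe that, since $\varepsilon$ and $\varrho$ are full orientations of $G$, the coupling $[\varrho,\varepsilon]$ takes values in $\{\pm 1\}$ on every edge. Hence each coordinate of $P_{\varrho,\varepsilon}$ is multiplication by $\pm 1$ in $A$, which is a group automorphism of $A$. Thus $P_{\varrho,\varepsilon}:A^E\to A^E$ is a group homomorphism. Bijectivity follows from the involutivity relation $P_{\varrho,\varepsilon}\circ P_{\varrho,\varepsilon} = \id$, which is an immediate consequence of $[\varrho,\varepsilon]=[\varepsilon,\varrho]$ and $[\varrho,\varepsilon](x)^2=1$; equivalently it is the special case $\rho=\varepsilon$ of the composition law (\ref{TEDTDW}).

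The heart of the argument is the second statement. I would fix a circuit $C$ with a direction $\varrho_{\textsc c}$ and compute
\[
\sum_{x\in C}[\varrho,\varrho_{\textsc c}](x)\,(P_{\varrho,\varepsilon}f)(x)
=\sum_{x\in C}[\varrho,\varrho_{\textsc c}](x)[\varrho,\varepsilon](x)\,f(x).
\]
The main obstacle is the sign identity
\[
[\varrho,\varrho_{\textsc c}](x)\,[\varrho,\varepsilon](x)=[\varepsilon,\varrho_{\textsc c}](x),
\]
which I would verify by picking a reference orientation $\varepsilon_0$ on each edge and writing $[\alpha,\beta](x)=s_\alpha(x)s_\beta(x)$, where $s_\alpha(x)=+1$ if $\alpha(x)=\varepsilon_0(x)$ and $-1$ otherwise; then the identity reduces to $s_\varrho^2=1$. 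Substituting, the sum becomes $\sum_{x\in C}[\varepsilon,\varrho_{\textsc c}](x)f(x)$, which vanishes because $\varrho_{\textsc c}$ is also a direction of the same circuit $C$, so $(C,\varrho_{\textsc c})$ is a directed circuit with respect to $\varepsilon$ and the tension hypothesis on $f$ applies. This gives $P_{\varrho,\varepsilon}(T(G,\varepsilon;A))\subseteq T(G,\varrho;A)$, and the reverse inclusion follows by applying the same argument to $P_{\varepsilon,\varrho}$, which is the inverse of $P_{\varrho,\varepsilon}$.

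Finally, the nowhere-zero statement is essentially automatic: since $[\varrho,\varepsilon](x)=\pm 1$ and acts on $A$ by an automorphism, $(P_{\varrho,\varepsilon}f)(x)=0$ if and only if $f(x)=0$, so $P_{\varrho,\varepsilon}$ restricts to a bijection between $T_{\rm nz}(G,\varepsilon;A)$ and $T_{\rm nz}(G,\varrho;A)$. The only nontrivial step is the coupling identity above; once that is in hand the rest is formal.
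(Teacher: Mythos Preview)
Your proposal is correct and follows essentially the same route as the paper's proof: both compute $\sum_{x\in C}[\varrho,\varepsilon_{\textsc c}](x)(P_{\varrho,\varepsilon}f)(x)$, reduce via the coupling identity $[\varrho,\varepsilon_{\textsc c}][\varrho,\varepsilon]=[\varepsilon,\varepsilon_{\textsc c}]$ to the tension condition for $f$, and then handle the reverse inclusion and the nowhere-zero case exactly as you do. The only difference is that you spell out a justification for the sign identity via a reference orientation, whereas the paper simply uses it.
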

\begin{proof} It is obvious that $P_{\varrho,\varepsilon}$ is a group isomorphism.
Let $f\in T(G,\varepsilon;A)$. For any directed circuit
$(C,\varepsilon_{\textsc c})$, we have
\[
\begin{split}
\sum_{x\in C}[\varrho,\varepsilon_{\textsc c}](x)(P_{\varrho,\varepsilon}f)(x)
&=
\sum_{x\in C}[\varrho,\varepsilon_{\textsc c}](x)[\varrho,\varepsilon](x)f(x)\\
&=\sum_{x\in C}[\varepsilon,\varepsilon_{\textsc c}](x)f(x)=0.
\end{split}
\]
Thus $P_{\varrho,\varepsilon}f\in T(G,\varrho;A)$. Similarly, for $g\in
T(G,\varrho;A)$, we have $P_{\varepsilon,\varrho}g\in T(G,\varepsilon;A)$.
Since $P_{\varrho,\varepsilon}$ is an involution, the first identity follows
immediately. The second identity follows from the fact that for $x\in E$,
$(P_{\varrho,\varepsilon}f)(x)\neq 0$ if and only if $f(x)\neq 0$.
\end{proof}

The {\em coloring arrangement} of $G$ with the color set $A$ is the subgroup
arrangement $\mathcal A_{\CL}(G,A)$ of the abelian group $K:=K(G,A)$,
consisting of the subgroups
\begin{gather}
K_e=K_e(G,\varepsilon;A): =\Bigl\{f\in K(G,A)\:\bigl|\: f(u)=f(v)\Bigr\}, \sp
e=uv\in E.
\end{gather}
The set of proper colorings with values in $A$ is the complement
$K(G,A)-\bigcup_{e\in E} K_e$. The {\em tension arrangement} of the
digraph $(G,\varepsilon)$ with the abelian group $A$ is the subgroup
arrangement ${\mathcal A}_{\TN}(G,\varepsilon;A)$ of the abelian
group $T:=T(G,\varepsilon;A)$, consisting of the subgroups
\begin{gather}
T_e=T_e(G,\varepsilon;A): =\Bigl\{f\in T(G,\varepsilon;A)\:\bigl|\:
f(e)=0\Bigr\}, \sp e\in E.
\end{gather}
The set of nowhere-zero tensions of $(G,\varepsilon)$ with values in
$A$ is the complement $T(G,\varepsilon; A)-\bigcup_{e\in E} T_e$.

\begin{thm}\label{Tension-Counting}
Let $A$ be an abelian group, either $|A|=q$ is finite or $A\in\{{\Bbb Z}, {\Bbb
Q}, {\Bbb R},{\Bbb C}\}$. Then the chromatic polynomial $\chi(G,t)$ is given by
\begin{equation}\label{Chromatic-Counting}
\chi(G,q) = \lambda\Biggl(K(G,A)-\bigcup_{e\in E} K_e\Biggr)\Biggl|_{t=q} =
\chi\Bigl(\mathcal A_{\CL}(G,A),q\Bigr),
\end{equation}
and the tension polynomial $\tau(G,t)$ is given by
\begin{equation}\label{Tension-Counting}
\tau(G,q) =\lambda\Biggl(T(G,\varepsilon;A)-\bigcup_{e\in
E}T_e\Biggr)\Biggl|_{t=q} =\chi\Bigl({\mathcal
A}_{\TN}(G,\varepsilon;A),q\Bigr).
\end{equation}
\end{thm}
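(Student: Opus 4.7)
The plan is to prove both formulas by a common three-step template: first, identify the ``good'' set of proper colorings (resp.\ nowhere-zero tensions) with the complement $\Omega-\bigcup_{F\in\mathcal{A}}F$ of the relevant subgroup arrangement; second, evaluate $\lambda$ on each intersection of flats in the arrangement by exploiting translate-invariance and productivity; third, invoke Theorem~\ref{Chracteristic-Formula-Thm} to recognise the resulting alternating sum as a characteristic polynomial. The second equality in each assertion is then an immediate restatement of that theorem, so the real content lies in the first equality.

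For the chromatic identity (\ref{Chromatic-Counting}), the definition of $K_e$ shows directly that $K_{\rm nz}(G,A)=K(G,A)-\bigcup_{e\in E}K_e$. When $|A|=q$ is finite, $\lambda(S)=|S|$ for any finite $S$, so $\chi(G,q)=|K_{\rm nz}(G,A)|$ is already the $\lambda$-value of the complement. For a torsion-free $A$ in the given list, I would compute $\lambda(K_X)$ for $X\subseteq E$, where $K_X:=\bigcap_{e\in X}K_e$ consists of colorings that are constant on each connected component of the subgraph $(V,X)$. Choosing one vertex per component realises $K_X$ as an internal direct sum $K_X\cong A^{k(V,X)}$ with $k(V,X)=|V|-r\langle X\rangle$, and this sum is also a direct summand of $K(G,A)=A^V$, so iterated productivity gives $\lambda(K_X)=\lambda(A)^{|V|-r\langle X\rangle}$. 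Feeding these values into the M\"obius computation from Theorem~\ref{Chracteristic-Formula-Thm} yields the Whitney-rank expression for $\chi(G,t)$; setting $t=q$ recovers (\ref{Chromatic-Counting}) and identifies $\chi(\mathcal{A}_{\CL}(G,A),q)$ with $\chi(G,q)$.

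For the tension identity (\ref{Tension-Counting}) the strategy is parallel. The set $T_{\rm nz}(G,\varepsilon;A)$ is $T(G,\varepsilon;A)-\bigcup_e T_e$ by definition of $T_e$, and the key computation is $\lambda(T_X)$ for $X\subseteq E$. Here I would rely on Corollary~\ref{TGEGamma}: taking a spanning forest $F_X$ of the subgraph $\langle X\rangle$ and extending it to a spanning forest $F$ of $G$, the explicit solution of the tension equations in Lemma~\ref{TN-Solution-Lemmma} realises $T_X$ as an internal direct sum of $r(G)-r\langle X\rangle$ free cyclic subgroups of $T(G,\varepsilon;A)\cong A^{r(G)}$ which is also a direct summand. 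Productivity then gives $\lambda(T_X)=\lambda(A)^{r(G)-r\langle X\rangle}$, and Theorem~\ref{Chracteristic-Formula-Thm} packages the inclusion--exclusion as $\chi(\mathcal{A}_{\TN}(G,\varepsilon;A),t)$. Independence of the chosen orientation is then immediate from Lemma~\ref{S-Epsilon-Rho}, since the involution $P_{\varrho,\varepsilon}$ transports $T_{\rm nz}(G,\varepsilon;A)$ bijectively onto $T_{\rm nz}(G,\varrho;A)$; independence of the abelian group structure of $A$ follows because the final expression depends on $A$ only through $\lambda(A)$.

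The main obstacle I anticipate is verifying the ``direct-sum-which-is-also-a-direct-summand'' hypothesis required by productivity. For $K_X$ this is transparent, since $K(G,A)=A^V$ splits coordinate-wise on the vertex set; but for $T_X$ one must work a little harder with the forest-based description of tensions in Lemma~\ref{TN-Solution-Lemmma} to produce compatible decompositions of $T_X$ and of a direct complement inside $T(G,\varepsilon;A)$. A secondary point is reconciling the polynomial identity obtained for infinite $A$ with the cardinality computation for finite $A$ of order $q$; both reduce to the same M\"obius/Whitney-rank expression once one notes that $\lambda(\Bbb Z)=t$ while $\lambda(A)=q$ when $|A|=q$ is finite.
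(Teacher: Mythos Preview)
Your approach is correct and essentially the same as the paper's: both identify the relevant set as the complement of the arrangement and invoke Theorem~\ref{Chracteristic-Formula-Thm}, with your explicit computation of $\lambda(K_X)$ and $\lambda(T_X)$ via productivity and Corollary~\ref{TGEGamma} simply unpacking what the paper absorbs into the formula $\lambda(\Gamma)=|\Omega|/|\Omega/\Gamma|$. The one point you elide is that for $A\in\{\Bbb Q,\Bbb R,\Bbb C\}$ the ambient group $A^V$ is not finitely generated, so the paper appeals to the analogous vector-space valuation $\lambda(W)=t^{\dim W}$ from \cite{Chen-Infinite-Subspace-Arrangement, Ehrenborg-Readdy1} rather than to the abelian-group valuation of \S2; your argument goes through verbatim once that valuation is in place.
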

\begin{proof}
For finite $|A|=q$ or $A=\Bbb Z$, (\ref{Chromatic-Counting}) and
(\ref{Tension-Counting}) are consequences of
Theorem~\ref{Chracteristic-Formula-Thm} by applying the valuation
$\lambda$ to the subgroup arrangements ${\mathcal A}_{\CL}(G,A)$ and
${\mathcal A}_{\TN}(G,\varepsilon;A)$. For $A={\Bbb F}\in\{{\Bbb Q},
{\Bbb R}, {\Bbb C}\}$ to be an infinite field,
(\ref{Chromatic-Counting}) and (\ref{Tension-Counting}) follow from
the valuation $\lambda$ on the hyperplane arrangements ${\mathcal
A}_{\CL}(G,\Bbb F)$ and ${\mathcal A}_{\TN}(G,\varepsilon;\Bbb F)$,
such that $\lambda(W)=t^{\dim(W)}$ for vector spaces $W$ over $\Bbb
F$; see \cite{Chen-Infinite-Subspace-Arrangement,
Ehrenborg-Readdy1}.
\end{proof}

If the abelian group $A$ is a finite field, the counting formula
(\ref{Chromatic-Counting}) was observed by Athanasiadis
\cite{Athanasiadis1} and Bj\"orner and Ekedahl
\cite{Bjorner-Ekedahl1}.

\section{Integral chromatic polynomials}

In this section we consider colorings of a graph $G=(V,E)$ with values in the
field $\Bbb R$ of real numbers. The set of proper colorings is the complement
of the hyperplane arrangement $\mathcal A_{\CL}(G,{\Bbb R})$. Let $q$ be a
positive integer, and let $(0,q)=\bigl\{a\in{\Bbb R}\:|\:0<a<q\bigr\}$,
$[0,q]=\bigl\{a\in{\Bbb R}\:|\:0\leq a\leq q\bigr\}$. Then the set of proper
colorings with the color set $\{1,2,\ldots,q-1\}$ is the set of lattice points
of the dilatation $q\Delta^+_{\CL}(G)$ (dilated by $q$) of the non-convex
polyhedron
\begin{gather}\label{Non-Convex-Cone}
\Delta^+_{\CL}(G): =\big\{f\in{\Bbb R}^V\:|\: f(u)\neq
f(v)\;\mbox{for}\; e=uv\in E,\,0<f<1\big\}.
\end{gather}
For each $e=uv\in E$, the non-equality $f(u)\neq f(v)$ can be split into two
inequalities:
\[
f(u)>f(v)  \sp\mbox{and}\sp f(u)<f(v),
\]
which can be interpreted as two orientations on the edge $e=uv$. Thus, for each
orientation $\varepsilon$ of $G$, we have a system of linear inequalities,
indexed by elements of $E(G)$. The system of such linear inequalities has a
common solution $f\in{\Bbb R}^V$ if and only if the orientation $\varepsilon$
is acyclic.

Recall that $A{\mathcal O}(G)$ is set of all acyclic orientations of $G$. For
each $\varepsilon\in A{\mathcal O}(G)$, we define a 0-1 open polytope
\begin{gather}\label{Convex-Cone}
\Delta^+_\CL(G,\varepsilon):=\big\{f\in{\Bbb R}^V\:|\: f(u)>f(v)
\;\mbox{for}\;u\stackrel{\varepsilon}{\rightarrow}v,\,0<f<1\big\}.
\end{gather}
Such polytopes are in one-to-one correspondence with the connected components
of the complement ${\Bbb R}^V-\bigcup{\mathscr A}_\CL(G,{\Bbb R})$. We call
$\Delta^+_\CL(G,\varepsilon)$ the {\em chromatic open polytope} and its closure
$\bar{\Delta}^+_\CL(G,\varepsilon)$ the {\em chromatic polytope} of the digraph
$(G,\varepsilon)$. Let $\chi(G,\varepsilon;q)$ denote the number of colorings
of the digraph $(G,\varepsilon)$ with the color set $\{1,2,\ldots,q\}$, i.e.,
\begin{equation}
\chi(G,\varepsilon;q): = \#\big\{f:V(G)\rightarrow
\{1,2,\ldots,q\}\:|\:f(u)>f(v)\; \mbox{for}\;
u\stackrel{\varepsilon}{\rightarrow}v\big\}.
\end{equation}
Each coloring $f$ counted in $\chi(G,\varepsilon;q)$ can be viewed as a lattice
point satisfying $0<f<q+1$ and $f(u)>f(v)$ for
$u\stackrel{\varepsilon}{\rightarrow}v$. Then $\chi(G,\varepsilon;q)$ counts
the number of lattice points of the open lattice polytope
$(q+1)\Delta^+_\CL(G,\varepsilon)$, i.e.,
\begin{equation}\label{CPO}
\chi(G,\varepsilon;q) =\bigl|(q+1)\Delta^+_\CL(G,\varepsilon)\cap{\Bbb
Z}^V\bigr|.
\end{equation}
We call $\chi(G,\varepsilon;q)$ the {\em chromatic polynomial of $G$ with
respect to $\varepsilon$}. Let
\begin{equation}
\bar\chi(G,\varepsilon;q): = \#\big\{f:V(G)\rightarrow
\{1,2,\ldots,q\}\:|\:f(u)\geq f(v)\; \mbox{for}\;
u\stackrel{\varepsilon}{\rightarrow}v\big\}.
\end{equation}
Notice that the color set $\{1,2\ldots,q\}$ can be replaced by
$\{0,1,\ldots,q-1\}=[0,q-1]\cap{\Bbb Z}$, and the counting function
$\bar\chi(G,\varepsilon;q)$ remains unchanged. Similarly, a coloring
$f$ counted in $\bar\chi(G,\varepsilon;q)$ with the color set
$\{0,1,\ldots,q-1\}$ can be viewed as a lattice point satisfying
$0\leq f\leq q-1$ and $f(u)\geq f(v)$ for
$u\stackrel{\varepsilon}{\rightarrow}v$. Thus
$\bar\chi(G,\varepsilon;q)$ counts the number of lattice points of
the lattice polytope $(q-1)\bar\Delta^+_\CL(G,\varepsilon)$, i.e.,
\begin{equation}\label{CPC}
\bar\chi(G,\varepsilon;q)=\bigl|(q-1)\bar\Delta^+_\CL(G,\varepsilon)\cap{\Bbb
Z}^V\bigr|.
\end{equation}
Now we have seen that $\chi(G,\varepsilon;q)$ is the Ehrhart polynomial
$L\bigl(\Delta^+_\CL(G,\varepsilon),q+1\bigr)$ of the lattice open polytope
$\Delta_ {\CL}^+(G,\varepsilon)$ at $q+1$, and $\bar\chi(G,\varepsilon;q)$ is
the Ehrhart polynomial $L\bigl(\bar\Delta^+_\CL(G,\varepsilon),q-1\bigr)$ of
the closed polytope $\bar\Delta_ {\CL}^+(G,\varepsilon)$ at $q-1$. \vspace{1ex}

{\sc Proof of Theorem~\ref{CP}.}  Let $\varepsilon\in A{\mathcal O}(G)$. The
polytope $\bar\Delta^+_\CL(G,\varepsilon)$ is the convex hull of the lattice
points $f\in{\Bbb Z}^V$ such that $f(w)=0$ or $1$ for all $w\in V$ and
$f(u)\geq f(v)$ for $u\stackrel{\varepsilon}{\rightarrow}v$. The reciprocity
law is a straightforward consequence of the reciprocity law of Ehrhart
polynomials. In fact, the reciprocity law (\ref{Epsilon-CP-RL}) follows from
(\ref{CPO}) and (\ref{CPC}) as
\begin{eqnarray*}
\chi(G,\varepsilon;-t) &=& L\bigl(\Delta^+_\CL(G,\varepsilon),-t+1\bigr)\\
&=& L\bigl(\Delta^+_\CL(G,\varepsilon),-(t-1)\bigr)\\
&=& (-1)^{|V|}L\bigl(\bar\Delta^+_\CL(G,\varepsilon),t-1\bigr)\\
&=& (-1)^{|V|}\bar\chi(G,\varepsilon;t).
\end{eqnarray*}
In particular, $\bar\chi(G,\varepsilon;1)
=L\bigl(\bar\Delta^+_\CL(G,\varepsilon),0\bigr)=1$ and
\[
\chi(G,\varepsilon;-1) =L\bigl(\Delta^+_\CL(G,\varepsilon),0\bigr) =(-1)^{\dim
\Delta^+_\CL(G,\varepsilon)} =(-1)^{|V|}.
\]

Let $t=q$ be a positive integer. Then (\ref{CP-RL}) follows from the disjoint
union
\begin{eqnarray*}
(q+1)\Delta^+_\CL(G) &=& \bigsqcup_{\varepsilon\in A{\mathcal
O}(G)}(q+1)\Delta^+_\CL(G,\varepsilon),
\end{eqnarray*}
and (\ref{CP-Sum-Closed}) follows from definition of $\bar\chi(G,q)$. The
reciprocity law (\ref{CP-RL}) is a straightforward consequence of
(\ref{CP-Sum-Open})--(\ref{Epsilon-CP-RL}). The interpretation for
$(-1)^{|V|}\chi(G,-1)$ follows from (\ref{CP-Sum-Open}) as
\[
\chi(G,-1) = \sum_{\varepsilon\in A{\mathcal O}(G)} \chi(G,\varepsilon;-1) =
\sum_{\varepsilon\in A{\mathcal O}(G)} (-1)^{|V|} = (-1)^{|V|}\bigl|A{\mathcal
O}(G)\bigr|.
\]
\qed

\begin{rmk}
One may also define integral $q$-colorings $f$ of a graph $G$ as
functions $f:V(G)\rightarrow{\Bbb Z}$ such that $|f(v)|<q$ for all
$v\in V(G)$. Then the counting function $\chi_{\Bbb Z}(G,q)$,
defined as the number of proper integral $q$-colorings of $G$, is a
polynomial function of positive integers $q$ of degree $|V|$.
However, this polynomial $\chi_{\Bbb Z}(G,q)$ is essentially the
same as $\chi(G,q)$, up to a change of variable, i.e.,
\[
\chi_{\Bbb Z}(G,q)=\chi(G,2q-1).
\]
\end{rmk}

\section{Integral tension polynomials}

In this section we consider tensions of digraph $(G,\varepsilon)$
with values in $\Bbb Z$ and $\Bbb R$. Given a positive integer $q$;
let $T(G,\varepsilon;q)$ be the set of all real-valued $q$-tensions
of $(G,\varepsilon)$. We denote by $T_{\Bbb Z}(G,\varepsilon;q)$ the
set of all integral $q$-tensions of $(G,\varepsilon)$, and by
$T_{\rm nz\Bbb Z}(G,\varepsilon;q)$ the set of all nowhere-zero
integral $q$-tensions, i.e.,
\[
T_{\rm nz\Bbb Z}(G,\varepsilon;q): =\big\{f\in T_{\Bbb
Z}(G,\varepsilon;q)\:|\: f(x)\neq 0\;\mbox{for}\; x\in E\big\}.
\]
Clearly, $T_{\rm nz\Bbb Z}(G,\varepsilon;q)$ is the set of lattice points of
the dilatation $q\Delta_{\TN}(G,\varepsilon)$ (dilated by $q$) of the
non-convex polyhedron
\[
\Delta_{\TN}(G,\varepsilon): =\big\{f\in T(G,\varepsilon;{\Bbb
R})\:|\: 0<|f(x)|<1\;\mbox{for}\;x\in E\big\}.
\]
It follows that
\begin{equation}
\tau_{\Bbb Z}(G,q): =\big|T_{\rm nz\Bbb Z}(G,\varepsilon;q)\big|
=L\big(\Delta_{\TN}(G,\varepsilon),q\big)
\end{equation}
is an Ehrhart polynomial function of degree
$\dim\Delta_{\TN}(G,\varepsilon)$ in the positive integral variable
$q$. We shall see that $|T_{\rm nz\Bbb Z}(G,\varepsilon;q)|$ is
independent of the chosen orientation $\varepsilon$; and call
$\tau_{\Bbb Z}(G,q)$ the {\em integral tension polynomial} of $G$.

\begin{lemma}
For orientations $\varepsilon,\varrho\in\mathcal O(G)$ and the involution
$P_{\varrho,\varepsilon}$, we have
\begin{equation}
\begin{split}
P_{\varrho,\varepsilon} \bigl(\Delta_{\TN}(G,\varepsilon)\bigr) &= \Delta_{\TN}(G,\varrho), \nonumber \\
P_{\varrho,\varepsilon} \bigl(T_{\rm nz\Bbb Z}(G,\varepsilon;q)\bigr) &= T_{\rm
nz\Bbb Z}(G,\varrho;q). \nonumber
\end{split}
\end{equation}
\end{lemma}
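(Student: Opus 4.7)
\emph{Plan.} The statement is essentially Lemma~\ref{S-Epsilon-Rho} translated from tension groups into the polytope and lattice-point language set up earlier in this section. The one ingredient needed is the observation that, since $\varepsilon$ and $\varrho$ are both orientations of the same graph $G$, the coupling satisfies $[\varrho,\varepsilon](x)\in\{+1,-1\}$ for every $x\in E$. Consequently $P_{\varrho,\varepsilon}$ acts on each coordinate by multiplication by $\pm 1$, and therefore preserves absolute value, nonzeroness, and integrality simultaneously.

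\emph{Execution.} For the first identity I would fix $f\in\Delta_{\TN}(G,\varepsilon)\subseteq T(G,\varepsilon;{\Bbb R})$ and set $g:=P_{\varrho,\varepsilon}f$. By Lemma~\ref{S-Epsilon-Rho} applied with $A={\Bbb R}$ one has $g\in T(G,\varrho;{\Bbb R})$, and $|g(x)|=|[\varrho,\varepsilon](x)|\,|f(x)|=|f(x)|$ forces $0<|g(x)|<1$ for every $x\in E$, whence $g\in\Delta_{\TN}(G,\varrho)$. The reverse inclusion is immediate because $P_{\varrho,\varepsilon}$ is an involution (its square is the identity, since $[\varrho,\varepsilon](x)^2=1$). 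The second identity proceeds by exactly the same scheme: if $f\in T_{\rm nz\Bbb Z}(G,\varepsilon;q)$ then Lemma~\ref{S-Epsilon-Rho} with $A={\Bbb Z}$ places $P_{\varrho,\varepsilon}f$ in $T(G,\varrho;{\Bbb Z})$, and its coordinates, being $\pm f(x)$, remain nonzero integers of absolute value strictly less than $q$; involutivity again supplies the opposite inclusion.

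\emph{Main obstacle.} There is no substantive obstacle here: everything reduces to a coordinatewise check once one notes that $|[\varrho,\varepsilon](x)|=1$ on $E(G)$. The only point worth double-checking is that Lemma~\ref{S-Epsilon-Rho} was stated for an arbitrary abelian group $A$, so it legitimately applies both to $A={\Bbb R}$ (for the polytope identity) and to $A={\Bbb Z}$ (for the lattice-point identity), and that the nowhere-zero and $q$-bound conditions are expressible purely in terms of $|f(x)|$, which is precisely what $P_{\varrho,\varepsilon}$ leaves invariant.
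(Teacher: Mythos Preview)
Your proposal is correct and follows exactly the paper's approach: the paper's entire proof reads ``It follows trivially from Lemma~\ref{S-Epsilon-Rho},'' and you have simply spelled out that triviality by noting that $[\varrho,\varepsilon](x)\in\{\pm1\}$ on $E(G)$, so $P_{\varrho,\varepsilon}$ preserves absolute values, nonzeroness, and integrality.
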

\begin{proof}
It follows trivially from Lemma~\ref{S-Epsilon-Rho}.
\end{proof}

For nowhere-zero real-valued tensions $f\in T_{\rm
nz}(G,\varepsilon;{\Bbb R})$, the non-equality $f(x)\neq 0$ can be
split into two inequalities:
\[
f(x)>0 \sp\mbox{and}\sp f(x)<0,
\]
which can be interpreted as two orientations on the edge $x$; one is the same
as $\varepsilon(x)$ and the other is opposite to $\varepsilon(x)$. Let
$\varrho$ be an orientation of $G$. We define a convex cone
\[
T(G,\varepsilon;\varrho):=\big\{f\in T(G,\varepsilon;{\Bbb R})\:|\:
[\varrho,\varepsilon](x)f(x)>0\;\mbox{for}\; x\in E\big\}
\]
of the tension vector space $T(G,\varepsilon;{\Bbb R})$. It is clear
that the complement
\[
T(G,\varepsilon;{\Bbb R})-\bigcup_{e\in E(G)} T_e
\]
is a disjoint union of open convex cones $T(G,\varepsilon;\varrho)$,
where $\varrho\in\mathcal O(G)$, but some of them may be empty. Each
such cone $T(G,\varepsilon;\varrho)$ is isomorphic to the open
convex cone
\[
T^+(G,\varrho): =\big\{f\in T(G,\varrho;{\Bbb R})\:|\:
f(x)>0\;\mbox{for}\; x\in E\big\}.
\]
We introduce the open polytope
\[
\Delta_{\TN}(G,\varepsilon;\varrho): =\big\{f\in
T(G,\varepsilon;\varrho)\:|\: |f(x)|<1\;\mbox{for}\;x\in E\big\}
\]
and the 0-1 open polytope
\[
\Delta_{\TN}^+(G,\varepsilon): =\big\{f\in T^+(G,\varepsilon)\:|\:
0<f(x)<1\;\mbox{for}\;x\in E\big\}.
\]
We call $\Delta_{\TN}^+(G,\varepsilon)$ the {\em tension open polytope} and its
closure $\bar\Delta_{\TN}^+(G,\varepsilon)$ the {\em tension polytope} of the
digraph $(G,\varepsilon)$. Let $\tau_{\Bbb Z}(G,\varepsilon;q)$ denote the
number of nowhere-zero tensions with values in $\{0,1,\ldots,q-1\}$. In other
words, $\tau_{\Bbb Z}(G,\varepsilon;q)$ is the number of positive integral
$q$-tensions of $(G,\varepsilon)$. Clearly, $\tau_{\Bbb Z}(G,\varepsilon;q)$
counts the number of lattice points of $q\Delta_{\TN}^+(G,\varepsilon)$, i.e.,
\begin{gather}\label{TGEQ}
\tau_{\Bbb Z}(G,\varepsilon;q):=\bigl|q\Delta_{\TN}^+(G,\varepsilon)\cap{\Bbb
Z}^E\bigr|=L\bigl(\Delta^+_{\TN}(G,\varepsilon),q\bigr).
\end{gather}
We call $\tau_{\Bbb Z}(G,\varepsilon;q)$ the {\em positive tension polynomial
with respect to $\varepsilon$}. Similarly, let $\bar\tau_{\Bbb
Z}(G,\varepsilon;q)$ denote the number of integral tensions of
$(G,\varepsilon)$ with values in $\{0,1,\ldots,q\}$. In other words,
$\bar\tau_{\Bbb Z}(G,\varepsilon;q)$ is the number of nonnegative integral
$(q+1)$-tensions of $(G,\varepsilon)$. Then $\bar\tau_{\Bbb
Z}(G,\varepsilon;q)$ counts the number of lattice points of
$q\bar\Delta_{\TN}^+(G,\varepsilon)$, i.e.,
\begin{gather}\label{BTGEQ}
\bar\tau_{\Bbb
Z}(G,\varepsilon;q):=\bigl|q\bar\Delta_{\TN}^+(G,\varepsilon)\cap{\Bbb
Z}^E\bigr|=L\bigl(\bar\Delta^+_{\TN}(G,\varepsilon),q\bigr).
\end{gather}
We call $\bar\tau_{\Bbb Z}(G,\varepsilon;q)$ the {\em nonnegative tension
polynomial of $G$ with respect to $\varepsilon$}.

\begin{lemma}\label{EDT}
For orientations $\varepsilon,\varrho\in{\mathcal O}(G)$ and the involution
$P_{\varrho,\varepsilon}$,
\begin{gather}
P_{\varrho,\varepsilon}\bigl(\Delta_{\TN}(G,\varepsilon;\varrho)\bigr) =
\Delta_{\TN}^+(G,\varrho),\label{TED-DTN}\\
\Delta_{\TN}(G,\varepsilon) = \bigsqcup_{\varrho\in\mathcal
O(G)}\Delta_{\TN}(G,\varepsilon;\varrho). \label{DTN-DTN}
\end{gather}
\end{lemma}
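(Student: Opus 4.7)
The proof naturally splits into the two stated identities, and both are essentially sign-pattern arguments once the right language is set up.

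For (\ref{TED-DTN}), the plan is to use Lemma~\ref{S-Epsilon-Rho}, which already tells us that the involution $P_{\varrho,\varepsilon}$ carries $T(G,\varepsilon;\Bbb R)$ bijectively onto $T(G,\varrho;\Bbb R)$, and then to check that the defining inequalities of the two polytopes match. Concretely, for $f\in\Delta_{\TN}(G,\varepsilon;\varrho)$, set $g=P_{\varrho,\varepsilon}f$; then $g\in T(G,\varrho;\Bbb R)$ and $g(x)=[\varrho,\varepsilon](x)f(x)$, so the condition $[\varrho,\varepsilon](x)f(x)>0$ defining $T(G,\varepsilon;\varrho)$ becomes $g(x)>0$, and since $[\varrho,\varepsilon](x)\in\{\pm 1\}$ we have $|g(x)|=|f(x)|<1$. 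Hence $g\in\Delta_{\TN}^+(G,\varrho)$. The reverse inclusion comes from applying $P_{\varepsilon,\varrho}$, which is the inverse of $P_{\varrho,\varepsilon}$ by (\ref{TEDTDW}) with $\rho=\varepsilon$.

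For (\ref{DTN-DTN}), the key observation is that every $f\in\Delta_{\TN}(G,\varepsilon)$ is nowhere zero (since $0<|f(x)|<1$), so it has a well-defined sign pattern on $E$. I would first establish disjointness: if $f$ lies in both $\Delta_{\TN}(G,\varepsilon;\varrho)$ and $\Delta_{\TN}(G,\varepsilon;\varrho')$, then $[\varrho,\varepsilon](x)$ and $[\varrho',\varepsilon](x)$ each equal the sign of $f(x)$, so they agree on every edge, forcing $\varrho(x)=\varrho'(x)$. For the covering part, given $f\in\Delta_{\TN}(G,\varepsilon)$, define $\varrho$ edge by edge by keeping the arrow $\varepsilon(x)$ when $f(x)>0$ and reversing it when $f(x)<0$. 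Then $[\varrho,\varepsilon](x)=\mathrm{sign}\,f(x)$, so $[\varrho,\varepsilon](x)f(x)=|f(x)|>0$, placing $f\in\Delta_{\TN}(G,\varepsilon;\varrho)$.

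The main obstacle is mild and is essentially bookkeeping: one must make sure that the coupling $[\varrho,\varepsilon]$ takes values in $\{\pm 1\}$ on every edge of $E$ (which it does, since both $\varepsilon$ and $\varrho$ are defined on all of $E$, so the ``otherwise'' branch of the coupling definition is never used), and that the convention $[\varrho,\varepsilon](x)=+1$ for agreement with $\varepsilon$ matches how $\varrho$ is read off from the sign of $f$. Note also that the disjoint union in (\ref{DTN-DTN}) ranges over all $\varrho\in\mathcal O(G)$; many of the cones will be empty (precisely those $\varrho$ admitting no positive tension, i.e., those with a directed circuit), but this causes no trouble for the identity itself.
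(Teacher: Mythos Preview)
Your proposal is correct and follows essentially the same approach as the paper's proof: for (\ref{TED-DTN}) both you and the paper push $f$ through $P_{\varrho,\varepsilon}$ and check the defining inequalities, and for (\ref{DTN-DTN}) both define $\varrho$ from the sign pattern of $f$. The only difference is that you explicitly verify disjointness of the pieces, which the paper leaves implicit in the $\bigsqcup$ symbol; this is a welcome addition but not a different method.
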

\begin{proof}
Let $f\in\Delta_{\TN}(G,\varepsilon;\varrho)$. Since
$P_{\varrho,\varepsilon}f\in T(G,\varrho;{\Bbb R})$ and
$P_{\varrho,\varepsilon}f(x)=[\varrho,\varepsilon](x)f(x)>0$ for all $x\in E$,
we have $P_{\varrho,\varepsilon}f\in\Delta_{\TN}^+(G,\varrho)$. Conversely, for
any $g\in\Delta_{\TN}^+(G,\varrho)$, we obviously have
$P_{\varepsilon,\varrho}g\in\Delta_{\TN}(G,\varepsilon;\varrho)$. Hence
(\ref{TED-DTN}) is valid.

The right-hand side of (\ref{DTN-DTN}) is obviously contained in the left-hand
side. Let $f\in\Delta_{\TN}(G,\varepsilon)$, and let $\varrho$ be an
orientation of $G$ such that $\varrho(x)=\varepsilon(x)$ if $f(x)>0$, and
$\varrho(x)\neq\varepsilon(x)$ if $f(x)<0$. Then
$f\in\Delta_{\TN}(G,\varepsilon;\varrho)$. Thus the left-hand side of
(\ref{DTN-DTN}) is contained in its right-hand side.
\end{proof}

The above Lemma~\ref{EDT} shows that for orientations $\varepsilon,\varrho$ and
positive integers $q$,
\[
\tau(G,\varrho;q)=\bigl|q\Delta_{\TN}^+(G,\varrho)\cap{\Bbb Z}^E\bigr|
=\bigl|q\Delta_{\TN}(G,\varepsilon;\varrho)\cap{\Bbb Z}^E\bigr|.
\]
Notice that $\tau_{\Bbb Z}(G,\varrho;q)\equiv 0$ if and only if
$\Delta_{\TN}^+(G,\varrho)=\emptyset$. The following lemma
characterizes $\varrho$ such that $\Delta_{\TN}^+(G,\varrho)$ is
nonempty.

\begin{lemma}\label{Positive-Nonempty}
Let $\varepsilon\in\mathcal O(G)$. Then the digraph $(G,\varepsilon)$ has a
positive real-valued tension if and only if $(G,\varepsilon)$ has no directed
circuit. In other words, $\Delta_{\TN}^+(G,\varepsilon)$ is nonempty if and
only if $\varepsilon$ is acyclic.
\end{lemma}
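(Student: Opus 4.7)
The plan is to prove both directions of the equivalence separately, with the forward (necessity) direction being a direct computation from the definition of tension and the backward (sufficiency) direction relying on the existence of a topological ordering on an acyclic digraph.

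For the forward direction, suppose $(G,\varepsilon)$ contains a directed circuit $(C,\varepsilon_{\textsc c})$. By the definition of a directed circuit, we may take $\varepsilon_{\textsc c}=\varepsilon|_C$, so that $[\varepsilon,\varepsilon_{\textsc c}](x)=1$ for every $x\in C$. The tension condition (\ref{Tension-Definition}) then reduces to $\sum_{x\in C} f(x)=0$, which is incompatible with $f(x)>0$ for all $x\in E$. Hence $\Delta_{\TN}^+(G,\varepsilon)=\emptyset$ whenever $\varepsilon$ fails to be acyclic.

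For the backward direction, assume $\varepsilon$ is acyclic. The standard fact that every finite acyclic digraph admits a topological ordering yields a potential function $g:V\to\mathbb R$ with $g(u)>g(v)$ whenever $u\xrightarrow{\varepsilon}v$; one may produce $g$ inductively by peeling off sinks (which must exist in any acyclic digraph). Define $f:E\to\mathbb R$ by $f(x)=(\delta g)(x)=g(u)-g(v)$ for $x=uv$ oriented $u\xrightarrow{\varepsilon}v$. By Proposition~\ref{Potential-Tension}(a), $f=\delta g\in T(G,\varepsilon;\mathbb R)$, and by construction $f(x)>0$ for every $x\in E$. Normalizing by a sufficiently large positive scalar (so that all $|f(x)|<1$) places $f$ inside $\Delta_{\TN}^+(G,\varepsilon)$, so the open tension polytope is nonempty.

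I expect no real obstacles: the forward direction is a one-line application of the tension axiom restricted to a directed circuit (using that the coupling $[\varepsilon,\varepsilon_{\textsc c}]$ is identically $1$ on $C$), and the backward direction is the classical observation that a topological ordering turns the positive tension problem into a trivial coboundary construction. The only point worth being careful about is to ensure the constructed $f$ lies in the open polytope $\Delta_{\TN}^+(G,\varepsilon)$ rather than just in the positive cone $T^+(G,\varepsilon)$, which is handled by a harmless rescaling.
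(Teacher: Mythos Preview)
Your argument is correct and supplies exactly the details one would expect: the paper's own proof consists of the single word ``Trivial.'' Your forward direction (the coupling is identically $1$ on a directed circuit, forcing $\sum_{x\in C}f(x)=0$) and backward direction (topological ordering gives a potential whose coboundary is a positive tension, then rescale into the open polytope) are the natural justifications the author had in mind.
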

\begin{proof}
Trivial.
\end{proof}

Theorem~\ref{Integral-TNP} gives an interpretation for the integral
tension polynomial $\tau_{\Bbb Z}(G,t)$ at zero and negative values.
This interpretation is similar to that of Stanley's result on the
chromatic polynomial at negative integers; see \cite{Stanley1}.
\vspace{1ex}

{\sc Proof of Theorem~\ref{Integral-TNP}.} It is clear that
$\Delta_{\TN}^+(G,\varepsilon)$ is a 0-1 open polytope. Since $\tau_{\Bbb
Z}(G,\varepsilon;q)$ and $\bar\tau_{\Bbb Z}(G,\varepsilon;q)$ are Ehrhart
polynomials by (\ref{TGEQ}) and (\ref{BTGEQ}), we automatically have the
reciprocity law (\ref{Epsilon-Integral-TNP-RL}), $\tau_{\Bbb
Z}(G,\varepsilon;0)=(-1)^{r(G)}$, and $\bar\tau_{\Bbb Z}(G,\varepsilon;0)=1$
from the standard properties of Ehrhart polynomials. The degree follows from
Corollary~\ref{TGEGamma}.

By identity (\ref{DTN-DTN}) and Lemma~\ref{Positive-Nonempty}, we
have the disjoint union
\[
\Delta_{\TN}(G,\varepsilon) = \bigsqcup_{\varrho\in A\mathcal O(G)}
P_{\varepsilon,\varrho}\bigl(\Delta_{\TN}^+(G,\varrho)\bigr).
\]
Now the identity (\ref{Integral-TNP-Sum-Open}) follows immediately. The formula
(\ref{Integral-TNP-Sum-Closed}) follows from definition of $\bar\tau_{\Bbb
Z}(G,q)$. The reciprocity law (\ref{TNP-RL}) follows from the reciprocity law
of Ehrhart polynomials. \qed

\section{Interpretation of modular tension polynomial}

In this section we shall give a decomposition for the modular tension
polynomial similar to that of Theorem~\ref{Integral-TNP}. Let $G=(V,E)$ be a
graph with possible loops and multiple edges. Recall that $A{\mathcal O}(G)$ is
the set of all orientations of $G$ without directed circuit. Given a positive
integer $q$; we define $\Mod_q:{\Bbb R}^E\rightarrow({\Bbb R}/q{\Bbb Z})^E$ by
\begin{gather}
(\Mod_qf)(x)=f(x)\,(\mod q), \sp f\in{\Bbb R}^E.
\end{gather}
Then $\Mod_q({\Bbb Z}^E)=({\Bbb Z}/q{\Bbb Z})^E$ is a subgroup of $({\Bbb
R}/q{\Bbb Z})^E$, and $\Mod_q\bigl(T(G,\varepsilon;{\Bbb Z})\bigr)$ is a
subgroup of $\Mod_q({\Bbb Z}^E)$. Let $\varepsilon,\varrho\in\mathcal O(G)$. We
define an involution $Q_{\varrho,\varepsilon}:[0,q]^E\rightarrow[0,q]^E$ by
\begin{equation}
(Q_{\varrho,\varepsilon}f)(x)=\left\{\begin{array}{rl} f(x) & \mbox{if}\sp
\varrho(x)=\varepsilon(x),\\
q-f(x) & \mbox{if}\sp \varrho(x)\neq\varepsilon(x),
\end{array}\right. \sp f\in[0,q]^E.
\end{equation}
For any orientations $\varepsilon,\varrho,\rho\in\mathcal O(G)$ and any
function $f\in[0,q]^E$, it is straightforward to check
\[
(Q_{\rho,\varrho}P_{\varrho,\varepsilon}f)(x)=\left\{
\begin{array}{rl}
f(x) & \mbox{if}\sp \rho(x)=\varrho(x)=\varepsilon(x), \\
-f(x) & \mbox{if}\sp \rho(x)=\varrho(x)\neq \varepsilon(x), \\
q-f(x) & \mbox{if}\sp \rho(x)\neq\varrho(x)=\varepsilon(x), \\
q+f(x) & \mbox{if}\sp \rho(x)\neq\varrho(x)\neq\varepsilon(x),
\end{array}\right.
\]
and
\begin{equation}\label{PQP}
(P_{\varepsilon,\rho} Q_{\rho,\varrho}
P_{\varrho,\varepsilon}f)(x) =\left\{
\begin{array}{ll}
f(x) & \mbox{if}\sp \rho(x)=\varrho(x)=\varepsilon(x), \\
f(x) & \mbox{if}\sp \rho(x)=\varrho(x)\neq \varepsilon(x), \\
f(x)-q & \mbox{if}\sp \rho(x)\neq\varrho(x)=\varepsilon(x),\\
f(x)+q & \mbox{if}\sp \rho(x)\neq\varrho(x)\neq\varepsilon(x).
\end{array}\right.
\end{equation}

\begin{lemma}\label{Surjectivity}
The restrictions $\Mod_q:T_{\Bbb Z}(G,\varepsilon;q)\rightarrow
T(G,\varepsilon;{\Bbb Z}/q{\Bbb Z})$ and $\Mod_q:T_{\rm nz\Bbb
Z}(G,\varepsilon;q)\rightarrow T_{\rm nz}(G,\varepsilon;{\Bbb Z}/q{\Bbb Z})$
are surjective.
\end{lemma}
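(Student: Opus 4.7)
The plan is to reduce both surjectivity statements to the essentially trivial fact that one can lift a $\mathbb{Z}/q\mathbb{Z}$-valued coloring to a $\mathbb{Z}$-valued coloring with bounded representatives, and then invoke Proposition~\ref{Potential-Tension}(a) to convert the lifted coloring back into a tension. The key observation is that a naive entry-by-entry lift of a modular tension $\bar{f}\in T(G,\varepsilon;{\Bbb Z}/q{\Bbb Z})$ to $\{0,1,\ldots,q-1\}^E$ need not satisfy the circuit conditions, so the lifting must be performed at the level of potentials (vertex colorings) rather than edge values. The difference operator $\delta$ then automatically produces an integral tension.

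Concretely, given $\bar{f}\in T(G,\varepsilon;{\Bbb Z}/q{\Bbb Z})$, I would first use Proposition~\ref{Potential-Tension}(a) with $A={\Bbb Z}/q{\Bbb Z}$ to write $\bar{f}=\delta g$ for some coloring $g:V\to{\Bbb Z}/q{\Bbb Z}$. Define $\tilde{g}:V\to\{0,1,\ldots,q-1\}$ by letting $\tilde{g}(v)$ be the standard residue representative of $g(v)$, and set $\tilde{f}:=\delta\tilde{g}$. As a coboundary, $\tilde{f}$ lies in $T(G,\varepsilon;{\Bbb Z})$ by Proposition~\ref{Potential-Tension}(a) applied with $A={\Bbb Z}$; and for each edge $x=uv$ with $u\xrightarrow{\varepsilon}v$ one has
\[
\tilde{f}(x)=\tilde{g}(u)-\tilde{g}(v)\in\{-(q-1),\ldots,q-1\},
\]
so $|\tilde{f}(x)|<q$. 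Hence $\tilde{f}\in T_{\Bbb Z}(G,\varepsilon;q)$. By construction $\Mod_q\tilde{f}=\delta g=\bar{f}$, which proves the first surjectivity.

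For the nowhere-zero statement, suppose in addition that $\bar{f}(x)\neq 0$ for every $x\in E$. For each edge $x=uv$ with $u\xrightarrow{\varepsilon}v$ the condition $\bar{f}(x)=g(u)-g(v)\neq 0$ in ${\Bbb Z}/q{\Bbb Z}$ gives $g(u)\neq g(v)$; since distinct residue classes have distinct representatives in $\{0,1,\ldots,q-1\}$, we get $\tilde{g}(u)\neq\tilde{g}(v)$ as integers, and therefore $\tilde{f}(x)=\tilde{g}(u)-\tilde{g}(v)\neq 0$. Thus $\tilde{f}\in T_{\rm nz\Bbb Z}(G,\varepsilon;q)$ and lifts $\bar{f}$, completing the argument.

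There is no serious technical obstacle here; the entire content of the lemma is that the surjectivity $K(G,{\Bbb Z})\twoheadrightarrow K(G,{\Bbb Z}/q{\Bbb Z})$ of coloring groups, combined with the identification $T(G,\varepsilon;A)=\Im\delta$ from Proposition~\ref{Potential-Tension}(a), automatically forces the surjectivity on tensions once one notices that the standard residue lift keeps edge differences in the interval $(-q,q)$ and preserves non-equalities.
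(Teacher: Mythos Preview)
Your proof is correct and follows essentially the same idea as the paper's: lift a modular tension by first expressing it as the coboundary of a potential, then lift the potential to representatives in $\{0,1,\ldots,q-1\}$ so that edge differences automatically lie in $(-q,q)$ and non-equalities are preserved. Your packaging is tidier because you invoke Proposition~\ref{Potential-Tension}(a) directly to obtain the potential $g$, whereas the paper reconstructs $g$ explicitly along shortest paths (essentially reproving that proposition in situ) before arriving at the same bound.
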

\begin{proof}
Let $\tilde f\in T(G,\varepsilon;{\Bbb Z}/q{\Bbb Z})$. We construct an integral
$q$-tension $f\in T_{\Bbb Z}(G,\varepsilon;q)$ as follows. We first identify
${\Bbb Z}/q{\Bbb Z}$ as the set $\{0,1,\ldots,q-1\}$. Let $G_0$ be a connected
component of $G$, and let $v_0$ be a fixed vertex of $G_0$. For each vertex $v$
of $G_0$, let $P=v_0v_1\ldots v_n$ be a shortest path from $v_0$ to $v=v_n$ and
$P(v_0,v_i)=v_0v_1\ldots v_i$, $1\leq i\leq n$. Let $\varrho$ be an orientation
of $P$ such that $v_{i-1}\xrightarrow{\varrho}v_i$, $1\leq i\leq n$. Define
\[
f(v_0,v_1)=\tilde f(v_0,v_1)
+\frac{q}{2}\Bigl([\varepsilon,\varrho](v_0,v_1)-1\Bigr),
\]
\begin{align*}
 f(v_{i-1},v_i) &=
\left\{\begin{array}{llc}
\tilde f(v_{i-1},v_i)+q[\varepsilon,\varrho](v_{i-1},v_i) &\mbox{if}& -q<s_i<0, \vspace{.3ex}\\
\tilde f(v_{i-1},v_i)   &\mbox{if}& 0\leq s_i<q, \vspace{.3ex}\\
\tilde f(v_{i-1},v_i)-q[\varepsilon,\varrho](v_{i-1},v_i) &\mbox{if}& q\leq
s_i<2q,
\end{array}\right.
\end{align*}
where
\[
s_i=[\varepsilon,\varrho](v_{i-1},v_i)\tilde f(v_{i-1},v_i)+ \sum_{x\in
P(v_0,v_{i-1})}[\varepsilon,\varrho](x)f(x).
\]
Let $Q(v_0,w_n)$ be another shortest path from $v_0$ to $v=w_n$. It is routine
by the construction of $f$ to verify that
\[
0\leq a_i:=\sum_{x\in P(v_0,v_i)} [\varepsilon,\varrho](x)f(x)<q,
\]
\[
0\leq b_i:= \sum_{x\in Q(v_0,w_i)} [\varepsilon,\varrho](x)f(x)<q,
\]
and $a_i\equiv b_i(\mod q)$ for $0\leq i\leq n$. Let
$g:V\rightarrow[0,q)\cap{\Bbb Z}$ be defined by $g(v_0):=0=a_0$ and
$g(v):=g(v_n)=a_n$. Then $g$ is a well-defined coloring of $G$ with
values in $\{0,1,\ldots,q-1\}$. It follows that the function $f$ is
a well-defined tension of $(G,\varepsilon)$. Since
$[\varepsilon,\varrho](v_{i-1},v_i)f(v_{i-1},v_i)=a_i-a_{i-1}$ for
$1\leq i\leq n$, we see that $|f|<q$. So $f\in T_{\Bbb
Z}(G,\varepsilon;q)$. Finally, it is clear that if $\tilde f$ is
nowhere-zero, then $f$ is nowhere-zero.

A non-constructive proof can be followed from Corollary~\ref{TGEGamma} and a
result of Tutte \cite{Tutte2} on regular abelian groups.
\end{proof}

Recall that an oriented cut in a digraph $(G,\varepsilon)$ is a disjoint union
of directed bonds. However, an oriented cut may be decomposed into a disjoint
union of bonds that may not be necessarily directed. For instance, the edge set
$\{a,b,c,d\}$ of the digraph in {\sc Figure}~\ref{D-Cut-UD-Bond} is an oriented
cut as it can be written as disjoint union of two directed bonds $\{a,b\}$ and
$\{c,d\}$. However, the same cut is the union of the bonds $\{a,d\}$ and
$\{b,c\}$, which are not directed.
\begin{figure}[h]
\centering \subfigure{\psfig{figure=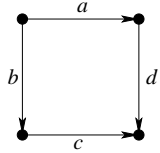,width=20mm}} \caption{An
oriented cut may be decomposed into a disjoint union of undirected bonds.
\label{D-Cut-UD-Bond}}
\end{figure}

\begin{prop}\label{Directed-Cut-Character}
Let $U\subseteq E(G)$ be a nonempty subset of a digraph $(G,\varepsilon)$. Then
$U$ is an oriented cut of $(G,\varepsilon)$ if and only if for any directed
circuit $(C,\varepsilon_{\textsc c})$,
\begin{gather}\label{Cut-Sum-Zero}
\sum_{x\in U\cap C}[\varepsilon,\varepsilon_{\textsc c}](x)=0.
\end{gather}
\end{prop}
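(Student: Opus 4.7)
The plan is to rephrase (\ref{Cut-Sum-Zero}) as the statement that the indicator $1_U$ is a tension of $(G,\varepsilon)$ and then, in the nontrivial direction, extract a potential $g:V\to{\Bbb Z}$ with $\delta g = 1_U$ via Proposition~\ref{Potential-Tension}(a) and decompose $U$ explicitly into directed bonds indexed by level sets of $g$ refined by certain connected components. The forward direction is a short crossing count; the reverse is where the real work lies.

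For the forward implication, write $U = B_1 \sqcup\cdots\sqcup B_m$ with each $B_i$ a directed bond, $\varepsilon$-oriented from $S_i$ to $T_i$. For any directed circuit $(C,\varepsilon_{\textsc c})$, the contribution $\sum_{x\in B_i\cap C}[\varepsilon,\varepsilon_{\textsc c}](x)$ equals the signed count of times the closed walk $C$ crosses from $S_i$ to $T_i$, which vanishes. Summing over $i$ yields (\ref{Cut-Sum-Zero}).

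Conversely, if (\ref{Cut-Sum-Zero}) holds then $1_U$ is a tension in the sense of (\ref{Tension-Definition}), so by Proposition~\ref{Potential-Tension}(a) there is $g:V\to{\Bbb Z}$ with $\delta g = 1_U$; equivalently, $g(u)-g(v)\in\{0,1\}$ along every $u\xrightarrow{\varepsilon}v$, with equality exactly on $U$. Assume $G$ is connected (else argue componentwise). For $k\in{\Bbb Z}$, set $V_{\le k}=g^{-1}((-\infty,k])$; for each connected component $C$ of $G[V_{\le k}]$, let $D_1,\ldots,D_s$ be the connected components of $G[V\setminus C]$. I claim $U$ is the disjoint union of the nonempty cuts $[C,D_j]$ as $(k,C,j)$ varies. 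Any $x=uv\in U$ with $u\xrightarrow{\varepsilon}v$ satisfies $g(u)=g(v)+1$; letting $k=g(v)$, $C$ the $G[V_{\le k}]$-component of $v$, and $D_j$ the $G[V\setminus C]$-component of $u$, one checks $x\in[C,D_j]$, and no other index triple contains $x$ (by comparison of $g$-values of endpoints). Each $[C,D_j]$ coincides with $[D_j,V\setminus D_j]$ since $D_j$ is a full component of $G[V\setminus C]$, and by construction all its edges are $\varepsilon$-directed from $V_{k+1}\cap D_j$ into $V_k\cap C$; hence the cut is directed. Further, $G[D_j]$ is connected by definition, and $G[V\setminus D_j]=G\bigl[C\cup\bigcup_{i\ne j}D_i\bigr]$ is connected because $G[C]$ is connected and every $D_i$ with $i\ne j$ must share an edge with $C$: no $G$-edge exits $D_i$ into another $D_l\subseteq V\setminus C$, while the connectedness of $G$ forces at least one edge out of $D_i$. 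Thus $[C,D_j]$ is a directed bond, and $U$ is their disjoint union, i.e., an oriented cut.

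The main obstacle is precisely the final connectivity step showing that $[C,D_j]$ is a bond. The naive one-level decomposition $U=\bigsqcup_k[V_{\le k},V\setminus V_{\le k}]$ produces directed cuts that are generally not bonds (witness the digraph $1\leftarrow 2\rightarrow 3\leftarrow 4$ with $g(1)=g(3)=0,\ g(2)=g(4)=1$, where the single level-$0$ cut equals all of $U$ yet decomposes into three bonds). The two-step refinement — first by $g$-level, then by connected components of $V\setminus C$ — is what rescues the argument and produces the required directed-bond decomposition.
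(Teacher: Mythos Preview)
Your forward direction is the same crossing-count argument as the paper's. Your reverse direction is correct but organized differently. The paper proceeds inductively: it fixes an edge $e\in U$ with $\varepsilon$-terminal vertex $v$, defines $S$ (resp.\ $T$) as the set of vertices $u$ for which some path $P$ from $u$ to $v$ has $\sum_{x\in U\cap P}[\varepsilon,\varepsilon_{\textsc p}](x)>0$ (resp.\ $\le 0$), shows $[S,T]\subseteq U$ is a directed cut, and recurses on $U\setminus[S,T]$. In your language these path sums compute $g(u)-g(v)$, so the paper's $[S,T]$ is precisely your single level cut $[V_{\le g(v)},\,V\setminus V_{\le g(v)}]$; the paper then relies implicitly on the standard fact that a directed cut decomposes into directed bonds. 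Your approach makes the potential $g$ explicit via Proposition~\ref{Potential-Tension}(a) and performs the full decomposition in one pass, including the refinement into bonds via the components $D_j$ of $G[V\setminus C]$. The payoff of your route is an explicit, non-recursive directed-bond decomposition and a clean reason why each piece is a bond (both sides connected); the paper's inductive peeling avoids invoking Proposition~\ref{Potential-Tension} but leaves the bond decomposition of each extracted cut as a tacit step. One small remark: Proposition~\ref{Potential-Tension} is stated in Section~3 under a loopless hypothesis, whereas Proposition~\ref{Directed-Cut-Character} is not; since the hypothesis~(\ref{Cut-Sum-Zero}) forces $U$ to contain no loops and any tension vanishes on loops, this causes no difficulty, but it is worth a one-line acknowledgment.
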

\begin{proof} ``$\Rightarrow$": Let $U$ be decomposed into a disjoint union of directed
bonds $B$. Then for any directed circuit $(C,\varepsilon_{\textsc c})$ of $G$,
the intersection $B\cap C$ has even number of edges, half of them agreeing with
the orientation $\varepsilon_{\textsc c}$. It is obvious that
\[
\sum_{x\in B\cap C}[\varepsilon,\varepsilon_{\textsc c}](x)=0.
\]
Hence (\ref{Cut-Sum-Zero}) is valid by adding up these sums for all directed
bonds $B$ in the decomposition of $U$.

``$\Leftarrow$": It is clear that $U$ contains no loops. Choose an edge $e\in
U$ whose ending vertex is $v$. Without loss of generality, we may assume that
the graph $G$ is connected. Let $S$ be a subset of $V$, consisting of vertices
$u$ such that there exists a path $P$ from $u$ to $v$ and
\[
\sum_{x\in U\cap P}[\varepsilon,\varepsilon_{\textsc p}](x)>0,
\]
where $\varepsilon_{\textsc p}$ is the direction of the path $P$ from $u$ to
$v$. Similarly, let $T$ be a subset of $V$, consisting of vertices $w$ such
that there exists a path $Q$ from $w$ to $v$ and
\[
\sum_{y\in U\cap Q}[\varepsilon,\varepsilon_{\textsc q}](x)\leq 0,
\]
where $\varepsilon_{\textsc q}$ is the direction of the path $Q$ from $w$ to
$v$. It is clear that $S$ and $T$ are well defined, and $S\cap T=\emptyset$,
$S\cup T=V$; so $[S,T]$ is a cut. We first claim that $[S,T]$ is contained in
$U$. In fact, if there is an edge $e'=u'v'\in[S,T]$ such that $e'\not\in U$,
then $u'\in S,v'\in T$. Let $P'$ be a path from $u'$ to $v$ and $\sum_{x\in
U\cap P'}[\varepsilon,\varepsilon_{\textsc p'}](x)>0$. Then $Q'=(v',e')P'$ is a
path from $v'$ to $v$ and $\sum_{x\in U\cap
Q'}[\varepsilon,\varepsilon_{\textsc q'}](x)>0$. So $v'\in S$, a contradiction.

Next we claim that $[S,T]$ is a directed cut with all arrows from $S$ to $T$.
Suppose there is an edge $e'$ with initial vertex $v'\in T$ and ending vertex
$u'\in S$. Let $P'$ be a path from $u'$ to $v$ such that $\sum_{x\in U\cap
P'}[\varepsilon,\varepsilon_{\textsc p'}](x)>0$. Then $Q'=(v',e')P'$ is a path
from $v'$ to $v$ and
\[
\sum_{x\in U\cap Q'}[\varepsilon,\varepsilon_{\textsc q'}](x) = 1 +\sum_{x\in
U\cap P'}[\varepsilon,\varepsilon_{\textsc p'}](x)>0.
\]
So $v'\in S$; this is a contradiction.

Now the edge set $[S,T]$ is a directed cut. Let $U'=U-[S,T]$. Then
by induction, $U'$ can be decomposed into a disjoint union of
directed bonds. So is $U$.
\end{proof}

Proposition~\ref{Directed-Cut-Character} can be considered as a dual version of
the degree condition (in-degree equals out-degree) on directed Eulerian graphs.
However, the degree condition (evenness of degrees) on Eulerian graphs has the
following dual version on cuts.

\begin{cor}\label{Cut-Character}
Let $G$ be a graph with possible loops and multiple edges, and let $U\subseteq
E(G)$ be a nonempty subset. Then $U$ is a cut if and only if $|U\cap C|$ is
even for any circuit $C$.
\end{cor}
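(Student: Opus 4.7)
The plan is to prove both directions directly by constructing the bipartition, rather than by reducing to Proposition~\ref{Directed-Cut-Character}; that proposition is about oriented cuts and directed circuits, and its hypothesis (a signed sum being zero) is genuinely stronger than $|U\cap C|$ being even for a fixed orientation, so applying it here would require an auxiliary orientation-choice argument that we can avoid.

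The forward direction is immediate: if $U=[S,T]$, then traversing any circuit $C$ as a closed walk starting from some vertex of $C$ flips the current side of the partition $\{S,T\}$ exactly when an edge of $U\cap C$ is used, and the walk must return to its starting side, so $|U\cap C|$ is even.

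For the converse, assume $|U\cap C|$ is even for every circuit $C$ and, by treating each connected component separately, reduce to the case of connected $G$ (components disjoint from $U$ may be placed wholly on either side of the eventual partition). Fix a basepoint $v_0\in V$ and set
\[
\pi(v)\;:=\;|U\cap P|\pmod 2,
\]
where $P$ is any path from $v_0$ to $v$. The crux is to show $\pi$ is well-defined: given two paths $P_1,P_2$ from $v_0$ to $v$, their edge-symmetric-difference $P_1\triangle P_2$ has even degree at every vertex (internal vertices of each path contribute $0$ or $2$, while both $v_0$ and $v$ receive degree $1$ from each path, summing to $2$), hence decomposes as an edge-disjoint union of circuits $C_1,\ldots,C_k$. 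The hypothesis then gives $|U\cap(P_1\triangle P_2)|=\sum_{i}|U\cap C_i|\equiv 0\pmod 2$, whence $\pi(v)$ is independent of $P$. Setting $S=\pi^{-1}(0)$ and $T=\pi^{-1}(1)$, one checks edge-by-edge that for $e=uv$ one has $e\in U$ iff $\pi(u)\neq\pi(v)$ iff $e\in[S,T]$ (by extending any $v_0$-to-$u$ path by $e$), so $U=[S,T]$.

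The main obstacle is the well-definedness of $\pi$, which rests on the classical fact that an even subgraph decomposes into edge-disjoint circuits (proved by greedy circuit extraction and induction on edge count). This is precisely the graph-theoretic dual of the in-degree-equals-out-degree argument used in Proposition~\ref{Directed-Cut-Character}, which is why the corollary deserves its name of ``dual version.''
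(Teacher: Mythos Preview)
Your proof is correct and takes a genuinely different route from the paper's. The paper argues by induction on $|U|$: it selects an edge $e\in U$, lists all circuits $C_1,\ldots,C_n$ through $e$, picks a companion $e_i\in U\cap(C_i-\{e\})$ for each (possible since $|U\cap C_i|$ is even and nonzero), declares $B=\{e,e_1,\ldots,e_n\}$ to be a bond of $G$, and applies the induction hypothesis to $U-B$; since disjoint cuts have a cut as their union (cuts being closed under symmetric difference), $U$ is a cut. Your argument instead constructs the bipartition $\{S,T\}$ in one stroke via the parity potential $\pi$, which is shorter and more transparent: it exhibits the partition explicitly rather than assembling $U$ from smaller pieces, and it sidesteps the verification that the paper's set $B$ really is a bond. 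The paper's inductive approach, on the other hand, yields as a byproduct a decomposition of $U$ into disjoint bonds, keeping the argument structurally parallel to the proof of Proposition~\ref{Directed-Cut-Character}. One small point worth tightening: your phrase ``extending any $v_0$-to-$u$ path by $e$'' tacitly assumes the extension is still a path; when $v$ already lies on the chosen path $P$, one instead observes that the $v$-to-$u$ segment of $P$ together with $e$ forms a circuit, and the hypothesis again gives $\pi(u)\equiv\pi(v)+[e\in U]\pmod 2$.
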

\begin{proof}
The necessity is obvious. To prove the sufficiency, we still proceed
by induction on $|U|$. For $|U|=1$, to have $|U\cap C|$ to be even
for a circuit $C$, the intersection $U\cap C$ must be empty. So $U$
is a bridge of $G$. Of course it is a cut. Let $|U|\geq 2$. Choose
an edge $e\in U$ and all circuits $C_1,\ldots,C_n$ that contain $e$.
For each $C_i$, choose an edge $e_i\in U\cap(C_i-\{e\})$ as $|U\cap
C_i|$ is even. It is clear that $B=e\cup\{e_i\:|\:1\leq i\leq n\}$
is a bond of $G$. Now for any circuit $C$, we have $|B\cap C|=even$
and $|U\cap C|=even$. It follows that $|(U-B)\cap C|=even$. If
$U-B=\emptyset$, then $U=B$ is a cut. If $U-B\neq\emptyset$, then by
induction $U-B$ is a cut. Hence $U=(U-B)\cup B$ is a cut.
\end{proof}

\begin{rmk} Proposition~\ref{Directed-Cut-Character} may be
known, but there is no good reference on the statement.
Corollary~\ref{Cut-Character} is well known, its inclusion here is just for
completeness. Directed circuit $(C,\varepsilon_{\textsc c})$ in
Proposition~\ref{Directed-Cut-Character} and circuit $C$ in
Corollary~\ref{Cut-Character} can be replaced by directed Eulerian subgraph and
Eulerian graph respectively.
\end{rmk}

\begin{lemma}\label{Cut-Equiv-Property}
\begin{enumerate}
\item[(a)] The relation $\sim$ is an equivalence relation on $\mathcal O(G)$.

\item[(b)] Let $\varepsilon,\varrho\in\mathcal O(G)$ be cut-equivalent. If
$\varepsilon$ is acyclic, so is $\varrho$.

\item[(c)] Let $\varepsilon,\varrho\in\mathcal O(G)$ be cut-equivalent. Then
$Q_{\varrho,\varepsilon}: q\bar\Delta^+_{\TN}(G,\varepsilon)\rightarrow
q\bar\Delta^+_{\TN}(G,\varrho)$ is a bijection, sending lattice points to
lattice points. In particular,
\begin{gather}
Q_{\varrho,\varepsilon}\bigl(q\Delta^+_{\TN}(G,\varepsilon)\bigr)=q\Delta^+_{\TN}(G,\varrho), \nonumber\\
\tau_{\Bbb Z}(G,\varepsilon;q)=\tau_{\Bbb Z}(G,\varrho;q), \nonumber\\
\bar\tau_{\Bbb Z}(G,\varepsilon;q)=\bar\tau_{\Bbb Z}(G,\varrho;q). \nonumber
\end{gather}
\end{enumerate}
\end{lemma}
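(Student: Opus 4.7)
The plan is to reformulate each claim as a statement about indicator functions inside a tension group, and then exploit the isomorphism $P_{\varrho,\varepsilon}$ of Lemma~\ref{S-Epsilon-Rho}. Proposition~\ref{Directed-Cut-Character} rewrites ``$U\subseteq E$ is an oriented cut of $(G,\varepsilon)$'' as ``$1_U\in T(G,\varepsilon;{\Bbb Z})$''. When $U=E(\varepsilon\ne\varrho)$, a one-line check shows $P_{\varrho,\varepsilon}1_U=-1_U$, so by Lemma~\ref{S-Epsilon-Rho} the two conditions ``$1_U\in T(G,\varepsilon;{\Bbb Z})$'' and ``$1_U\in T(G,\varrho;{\Bbb Z})$'' are equivalent; the ambiguity in the definition of $\sim$ is thus harmless.

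For part (a), reflexivity and symmetry are immediate. For transitivity, suppose $\varepsilon\sim\varrho\sim\rho$ and set $U_1=E(\varepsilon\ne\varrho)$, $U_2=E(\varrho\ne\rho)$, $U=E(\varepsilon\ne\rho)$. A short case check on whether an edge $x$ lies in $U_1$ and/or $U_2$ (using $\varepsilon(x)=\rho(x)$ iff $x$ lies in both or neither) verifies $U=U_1\triangle U_2$ and establishes the key identity
\[
1_U\;=\;1_{U_1}+P_{\varepsilon,\varrho}1_{U_2}.
\]
Both summands lie in $T(G,\varepsilon;{\Bbb Z})$: the first by $\varepsilon\sim\varrho$, and the second by applying Lemma~\ref{S-Epsilon-Rho} to $1_{U_2}\in T(G,\varrho;{\Bbb Z})$. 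Hence $1_U\in T(G,\varepsilon;{\Bbb Z})$, and $\varepsilon\sim\rho$.

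For part (b), set $U:=E(\varepsilon\ne\varrho)$, which by cut-equivalence is an oriented cut of $(G,\varrho)$. If $\varrho$ contains a directed circuit $C$ with direction $\varepsilon_{\textsc c}=\varrho|_C$, then Proposition~\ref{Directed-Cut-Character} forces $\sum_{x\in U\cap C}[\varrho,\varepsilon_{\textsc c}](x)=0$; every summand equals $+1$, so $U\cap C=\emptyset$. Thus $\varepsilon$ and $\varrho$ agree on $C$, making $C$ a directed circuit of $\varepsilon$ as well, contradicting acyclicity. For part (c), splitting on whether $\varrho(x)=\varepsilon(x)$ yields the direct formula
\[
Q_{\varrho,\varepsilon}f\;=\;q\cdot 1_{E(\varepsilon\ne\varrho)}+P_{\varrho,\varepsilon}f,\qquad f\in[0,q]^E.
\]
By $\varepsilon\sim\varrho$ the first term lies in $q\,T(G,\varrho;{\Bbb Z})$, and by Lemma~\ref{S-Epsilon-Rho} the second lies in $T(G,\varrho;{\Bbb R})$ iff $f\in T(G,\varepsilon;{\Bbb R})$. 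Combined with the pointwise facts that $t\mapsto q-t$ preserves $[0,q]$, $(0,q)$, and ${\Bbb Z}$, this shows that $Q_{\varrho,\varepsilon}$ is an involutive bijection $q\bar\Delta^+_{\TN}(G,\varepsilon)\leftrightarrow q\bar\Delta^+_{\TN}(G,\varrho)$ that restricts to bijections on the open polytopes and on their lattice points, so $\tau_{\Bbb Z}(G,\varepsilon;q)=\tau_{\Bbb Z}(G,\varrho;q)$ and $\bar\tau_{\Bbb Z}(G,\varepsilon;q)=\bar\tau_{\Bbb Z}(G,\varrho;q)$.

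The main obstacle is spotting the two algebraic identities above: $1_{U_1\triangle U_2}=1_{U_1}+P_{\varepsilon,\varrho}1_{U_2}$ replaces the symmetric-difference bookkeeping by a linear sum, and $Q_{\varrho,\varepsilon}f=q\cdot 1_{E(\varepsilon\ne\varrho)}+P_{\varrho,\varepsilon}f$ decouples the coordinate-wise flip $t\mapsto q-t$ into a shift by $q$ on a distinguished edge subset plus the sign involution $P$. Both hold because $P$ furnishes precisely the $\pm 1$ correction required on the distinguished edge set; once they are in hand, all three parts collapse to one-line applications of Lemma~\ref{S-Epsilon-Rho} and Proposition~\ref{Directed-Cut-Character}.
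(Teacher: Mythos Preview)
Your proof is correct and takes a genuinely different route from the paper's. The paper argues each part by direct combinatorial manipulation: for (a) it chooses partitions $\{P_1,T_1\}$, $\{P_2,T_2\}$ realizing the two cuts, builds a new partition $\{S,T\}$, and then checks via Proposition~\ref{Directed-Cut-Character} that $[S,T]=E(\varepsilon_1\neq\varepsilon_3)$ is oriented by expanding and cancelling sums over a circuit; for (c) it verifies $Q_{\varrho,\varepsilon}f$ is a $\varrho$-tension by an explicit circuit-sum computation. Your approach instead pivots on the single reformulation ``$U$ is an oriented cut of $(G,\varepsilon)$ iff $1_U\in T(G,\varepsilon;\mathbb Z)$'' and then works entirely inside tension groups. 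The two algebraic identities you isolate, $1_{U_1\triangle U_2}=1_{U_1}+P_{\varepsilon,\varrho}1_{U_2}$ and $Q_{\varrho,\varepsilon}f=q\cdot 1_{E(\varepsilon\neq\varrho)}+P_{\varrho,\varepsilon}f$, do all the work: transitivity becomes closure of a subgroup under addition, and the tension-preservation of $Q_{\varrho,\varepsilon}$ becomes ``integer shift plus $P$-image''. This buys you a shorter, more structural argument that avoids choosing vertex partitions and avoids the circuit-sum bookkeeping; the paper's version is more hands-on but less transparent about why the pieces fit together. Your proof of (b) is essentially the same idea as the paper's, phrased more crisply via the vanishing sum forcing $U\cap C=\emptyset$.
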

\begin{proof}
(a) The reflexivity and symmetry property are obvious. Let
$\varepsilon_i\in\mathcal O(G)$ ($i=1,2,3$) be such that
$\varepsilon_1\sim\varepsilon_2$ and
$\varepsilon_2\sim\varepsilon_3$. Let $\{P_1,T_1\}$ and
$\{P_2,T_2\}$ be partitions of the vertex set $V$ such that
\[
[P_1,T_1]=E(\varepsilon_1\neq\varepsilon_2), \sp
[P_2,T_2]=E(\varepsilon_2\neq\varepsilon_3).
\]
If $(P_1\cap P_2)\cup(T_1\cap T_2)=\emptyset$, we have $P_1=T_2$, $P_2=T_1$; if
$(P_1\cap T_2)\cup(P_2\cap T_1)=\emptyset$, we have $P_1=P_2$, $T_1=T_2$. In
either case the two cuts are the same. Hence $\varepsilon_1=\varepsilon_3$. If
it is neither of the two cases, let $S=(P_1\cap P_2)\cup(T_1\cap T_2)$,
$T=(P_1\cap T_2)\cup(P_2\cap T_1)$. Then $[S,T]$ is a cut and can be written as
a disjoint union of sets
\[
[P_1\cap P_2,P_1\cap T_2],\; [P_1\cap P_2,P_2\cap T_1],\; [T_1\cap T_2,P_1\cap
T_2],\; [T_1\cap T_2,P_2\cap T_1].
\]
It is routine to verify that $[S,T]=E(\varepsilon_1\neq\varepsilon_3)$.

Next we show that the cut $[S,T]$ is an oriented cut. Let
$(C,\varepsilon_{\textsc c})$ be a directed circuit. Then
\[
\begin{split}
\sum_{x\in C\cap E(\varepsilon_1\neq\varepsilon_3)}
[\varepsilon_1,\varepsilon_{\textsc c}](x) & = \Biggl\{\sum_{x\in C\cap
E(\varepsilon_1=\varepsilon_2\neq\varepsilon_3)} + \sum_{x\in C\cap
E(\varepsilon_1\neq \varepsilon_2=\varepsilon_3)}\Biggr\}
[\varepsilon_1,\varepsilon_{\textsc c}](x)\\
&= \Biggl\{\sum_{x\in C\cap E(\varepsilon_2\neq\varepsilon_3)} - \sum_{x\in
C\cap E(\varepsilon_1\neq \varepsilon_2\neq\varepsilon_3)}\Biggr\}
[\varepsilon_2,\varepsilon_{\textsc c}](x)\\
&\quad - \Biggl\{\sum_{x\in C\cap E(\varepsilon_1\neq\varepsilon_2)} -
\sum_{x\in C\cap E(\varepsilon_1\neq \varepsilon_2\neq\varepsilon_3)}\Biggr\}
[\varepsilon_2,\varepsilon_{\textsc c}](x) \\
&= \Biggl\{\sum_{x\in C\cap E(\varepsilon_2\neq\varepsilon_3)} - \sum_{x\in
C\cap E(\varepsilon_1\neq \varepsilon_2)}\Biggr\}
[\varepsilon_2,\varepsilon_{\textsc c}](x)=0.
\end{split}
\]
The last equality follows from Proposition~\ref{Directed-Cut-Character}.

(b) Suppose $\varrho$ is not acyclic, i.e., there is a directed
circuit $(C,\varrho_{\textsc c})$ of the digraph $(G,\varrho)$.
Since $(G,\varepsilon)$ has no directed circuit, the circuit $C$
must intersect $E(\varepsilon\neq\varrho)$. Thus any bond of
$E(\varepsilon\neq\varrho)$ that intersects $(C,\varrho_{\textsc
c})$ must have opposite directed edges. This is a contrary to that
$E(\varepsilon\neq\varrho)$ is an oriented cut.

(c) Let $C$ be a circuit of $G$ with an orientation $\varepsilon_{\textsc c}$.
For any $f\in q\bar\Delta^+_{\TN}(G,\varepsilon)$, we have
\[
\sum_{x\in C}[\varepsilon,\varepsilon_{\textsc c}](x)f(x)=0.
\]
Notice that $\varepsilon$ and $\varrho$ have the same direction at
edges of $C\cap E(\varepsilon=\varrho)$ and have the opposite
directions at edges of $C\cap E(\varepsilon\neq\varrho)$. Then
\[
\begin{split}
\sum_{x\in C}[\varrho,\varepsilon_{\textsc c}](x)(Q_{\varrho,\varepsilon}f)(x)
& =\sum_{x\in C\cap E(\varepsilon=\varrho)}[\varepsilon,\varepsilon_{\textsc
c}](x)f(x)
\nonumber \\
& \quad - \sum_{x\in C\cap
E(\varepsilon\neq\varrho)}[\varepsilon,\varepsilon_{\textsc c}](x)\bigl(q-f(x)\bigr) \nonumber \\
& = \sum_{x\in C}[\varepsilon,\varepsilon_{\textsc c}](x)f(x)-q\sum_{x\in C\cap
E(\varepsilon\neq\varrho)}[\varepsilon,\varepsilon_{\textsc c}](x)=0.
\end{split}
\]
This means that $Q_{\varrho,\varepsilon}f$ is a nonnegative
$q$-tension of $(G,\varrho)$. The other identities can be verified
trivially.
\end{proof}

Part (b) of Lemma~\ref{Cut-Equiv-Property} shows that the cut-equivalence
relation $\sim$ can be viewed as an equivalence relation on the set $A\mathcal
O(G)$ of acyclic orientations. We denote by $[A\mathcal O(G)]$ the quotient set
$A\mathcal O(G)/\!\!\sim$ of cut-equivalence classes. For $\varepsilon\in
A\mathcal O(G)$, we denote by $[\varepsilon]$ the equivalence class in
$A\mathcal O(G)/\!\!\sim$, and define
\begin{gather}
\tau_{\Bbb Z}\bigl(G,[\varepsilon];q\bigr):=\tau_{\Bbb Z}(G,\varepsilon;q), \sp
\bar\tau_{\Bbb Z}\bigl(G,[\varepsilon];q\bigr):=\bar\tau_{\Bbb
Z}(G,\varepsilon;q).
\end{gather}
Let $\bar\tau(G,q)$ denote the number of pairs $(f,[\varepsilon])$,
where $[\varepsilon]$ is a cut-equivalence class of $\varepsilon$ in
$[A\mathcal O(G)]$ and $f$ is an integral $(q+1)$-tension of
$(G,\varepsilon)$, i.e.,
\begin{align}
\bar\tau(G,q):&=\#\big\{(f,\varepsilon)\:|\:
\varepsilon\in[A\mathcal
O(G)],\;\mbox{$f$ is an integral tension} \nonumber\\
&\hspace{11mm} \mbox{of $(G,\varepsilon)$ such that $0\leq f(x)\leq
q$ for $x\in E$}\big\}.
\end{align}
Let $f$ be a real-valued tension of $(G,\varepsilon)$. We define an orientation
$\varepsilon_f$ on $G$ as
\begin{equation}\label{EFX}
\varepsilon_f(x)=\left\{\begin{array}{rl} \varepsilon(x) & \mbox{if}\; f(x)>0,
\\ -\varepsilon(x) & \mbox{if}\; f(x)\leq 0,
\end{array}\right. \sp x\in E.
\end{equation}
Let $\varrho$ be an orientation of $G$. We define a 0-1 function
$f_\varrho:E\rightarrow\{0,1\}$ as
\begin{equation}\label{FRX}
f_\varrho(x)=\left\{\begin{array}{ll} 1 & \mbox{if}\;
\varrho(x)=\varepsilon(x), \\
0  & \mbox{if}\; \varrho(x)\neq\varepsilon(x),
\end{array}\right. \sp x\in E.
\end{equation}

\begin{lemma}\label{Induced-Orientation-C-equivalent}
Let $f_i\in T_{\rm nz\Bbb Z}(G,\varepsilon;q)$, and let
$\varepsilon_i=\varepsilon_{f_i}$ be orientations defined by (\ref{EFX}). If
$f_1(x)\equiv f_2(x)\,(\mod q)$ for all $x\in E(G)$, then $\varepsilon_1$ and
$\varepsilon_2$ are cut-equivalent.
\end{lemma}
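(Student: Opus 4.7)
The plan is to extract an integral tension from the two given ones and read off the oriented cut structure from its support. Set $g := f_1 - f_2$, which is an integral tension of $(G,\varepsilon)$ because tensions form an abelian group under addition. Since $f_1, f_2$ take values in $(-q,q)\setminus\{0\}$ and agree modulo $q$, the values of $g$ lie in $\{-q,0,q\}$.

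First I would identify the symmetric-difference edge set
\[
U := E(\varepsilon_1 \neq \varepsilon_2) = \big\{ x \in E \;:\; f_1(x) \text{ and } f_2(x) \text{ have opposite signs}\big\},
\]
and observe from (\ref{EFX}) that this equals exactly $\{x : g(x)\neq 0\}$. If $U = \emptyset$ then $\varepsilon_1 = \varepsilon_2$ and we are done, so assume $U \neq \emptyset$. Next I would verify the key identity
\[
g(x) = q\,[\varepsilon,\varepsilon_1](x) \qquad \text{for all } x \in U,
\]
by splitting into the two sign cases: if $f_1(x)>0>f_2(x)$ then $[\varepsilon,\varepsilon_1](x)=1$ and the congruence forces $g(x)=q$, while if $f_1(x)<0<f_2(x)$ then $[\varepsilon,\varepsilon_1](x)=-1$ and $g(x)=-q$.

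Now let $(C,\varepsilon_{\textsc c})$ be an arbitrary directed circuit. Applying the tension identity (\ref{Tension-Definition}) for $g$ with respect to $\varepsilon$ and using the multiplicativity $[\varepsilon,\varepsilon_{\textsc c}](x) = [\varepsilon,\varepsilon_1](x)\,[\varepsilon_1,\varepsilon_{\textsc c}](x)$ together with $[\varepsilon,\varepsilon_1](x)^2 = 1$, we obtain
\[
0 \;=\; \sum_{x\in C}[\varepsilon,\varepsilon_{\textsc c}](x)\,g(x)
\;=\; q\sum_{x\in C\cap U}[\varepsilon_1,\varepsilon_{\textsc c}](x)\,[\varepsilon,\varepsilon_1](x)^{2}
\;=\; q\sum_{x\in C\cap U}[\varepsilon_1,\varepsilon_{\textsc c}](x).
\]
Dividing by $q$ yields the criterion of Proposition~\ref{Directed-Cut-Character} applied to the digraph $(G,\varepsilon_1)$, so $U$ is an oriented cut of $(G,\varepsilon_1)$, and therefore $\varepsilon_1 \sim \varepsilon_2$ by definition of the cut-equivalence relation.

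The proof is essentially a clean algebraic rearrangement; the only non-obvious point is the sign bookkeeping in step three that ties the discrete value $g(x)\in\{\pm q\}$ to the coupling $[\varepsilon,\varepsilon_1](x)$. Once that identity is in hand, the multiplicative property of the coupling converts the tension condition for $g$ into exactly the oriented-cut characterization, so the main conceptual obstacle is simply recognizing that Proposition~\ref{Directed-Cut-Character} is the right tool to bridge the two conditions.
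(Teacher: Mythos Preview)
Your proof is correct. Both your argument and the paper's reduce to verifying the criterion of Proposition~\ref{Directed-Cut-Character} for the set $E(\varepsilon_1\neq\varepsilon_2)$, but the routes differ in the choice of auxiliary tension. The paper passes to the \emph{positive} tensions $g_i = P_{\varepsilon_i,\varepsilon}f_i \in q\Delta^+_{\TN}(G,\varepsilon_i)$, records the edgewise relation $g_1(x) = g_2(x)$ or $g_1(x) = q - g_2(x)$ according as $\varepsilon_1(x)=\varepsilon_2(x)$ or not, and then subtracts the two separate tension identities $\sum_{x\in C}[\varepsilon_i,\varepsilon_{\textsc c}](x)g_i(x)=0$. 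You instead work with the single difference tension $g = f_1 - f_2$ of $(G,\varepsilon)$, observe that it equals $q[\varepsilon,\varepsilon_1]$ on $U$ and vanishes off $U$, and let the multiplicativity of the coupling convert the tension identity for $g$ directly into the required sum. Your version is a little more economical and self-contained; the paper's version stays within the $P_{\varrho,\varepsilon}$, $Q_{\varrho,\varepsilon}$ formalism that drives the rest of the section, which makes the subsequent Lemma~\ref{Minverse} a natural continuation.
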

\begin{proof}
Since $f_1(x)\equiv f_2(x)(\mod q)$ for all $x\in E(G)$, we have
\[
f_1(x)=f_2(x)+k(x)q, \sp x\in E,
\]
where $k(x)\in\{-1,0\}$ if $f_2(x)>0$, and $k(x)\in\{0,1\}$ if $f_2(x)<0$. More
precisely,
\[
f_1(x)=\left\{\begin{array}{lll} f_2(x) & \mbox{if} &
\varepsilon_1(x)=\varepsilon_2(x), \\
f_2(x)-q & \mbox{if} &
\varepsilon_1(x)\neq \varepsilon_2(x)=\varepsilon(x),\\
f_2(x)+q & \mbox{if} & \varepsilon_1(x)\neq \varepsilon_2(x)\neq\varepsilon(x).
\end{array}\right.
\]
Let $g_i=P_{\varepsilon_i,\varepsilon}f_i$. Then $g_i$ are positive
$q$-tensions of $(G,\varepsilon_i)$. Moreover,
\begin{equation}\label{QgI}
g_1(x)=\left\{\begin{array}{rll} g_2(x) & \mbox{if} &
\varepsilon_1(x)=\varepsilon_2(x), \\
q-g_2(x) & \mbox{if} & \varepsilon_1(x)\neq \varepsilon_2(x).
\end{array}\right.
\end{equation}

Let $C$ be a circuit of $G$ with a direction $\varepsilon_{\textsc c}$. Since
$g_i$ are positive tensions of $(G,\varepsilon_i)$, then
\[
\sum_{x\in C}[\varepsilon_1,\varepsilon_{\textsc c}](x)g_1(x)=\sum_{x\in
C}[\varepsilon_2,\varepsilon_{\textsc c}](x)g_2(x)=0.
\]
Applying (\ref{QgI}) and cancelling the common terms on both sides, we have
\begin{align}
\sum_{x\in C\cap
E(\varepsilon_1\neq\varepsilon_2)}[\varepsilon_1,\varepsilon_{\textsc
c}](x)g_1(x) &= \sum_{x\in C\cap E(\varepsilon_1\neq\varepsilon_2)}
[\varepsilon_2,\varepsilon_{\textsc c}](x)g_2(x)
\nonumber\\
&= -\sum_{x\in C\cap E(\varepsilon_1\neq\varepsilon_2)}
[\varepsilon_1,\varepsilon_{\textsc c}](x)\bigl(q-g_1(x)\bigr). \nonumber
\end{align}
It follows that
\[
\sum_{x\in C\cap
E(\varepsilon_1\neq\varepsilon_2)}[\varepsilon_1,\varepsilon_{\textsc c}](x)=0.
\]
By Proposition~\ref{Directed-Cut-Character},
$E(\varepsilon_1\neq\varepsilon_2)$ is an oriented cut. Hence
$\varepsilon_1\sim\varepsilon_2$.
\end{proof}

\begin{lemma}\label{Minverse}
Let $\varrho,\rho,\sigma\in A\mathcal O(G)$ be such that
$\rho\sim\sigma\sim\varrho$, and $f\in q\Delta_{\TN}(G,\varepsilon;\varrho)$.
Then
\begin{enumerate}
\item[(a)] $\varepsilon_f=\varrho$.

\item[(b)] $P_{\varepsilon,\rho} Q_{\rho,\varrho} P_{\varrho,\varepsilon}
\big(q\Delta_{\TN}(G,\varepsilon;\varrho)\big)
=q\Delta_{\TN}(G,\varepsilon;\rho)$.

\item[(c)] $P_{\varepsilon,\rho} Q_{\rho,\varrho}
P_{\varrho,\varepsilon}f=P_{\varepsilon,\sigma} Q_{\sigma,\varrho}
P_{\varrho,\varepsilon}f$ if and only if $\rho=\sigma$.

\item[(d)] $T(G,\varepsilon;q)\cap \Mod_q^{-1}\bigl(\Mod_q f\bigr) =
\bigl\{P_{\varepsilon,\alpha} Q_{\alpha,\varrho} P_{\varrho,\varepsilon}f \:|\:
\alpha\sim\varrho\bigr\}$.
\end{enumerate}
\end{lemma}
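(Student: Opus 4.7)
\textbf{Proof plan for Lemma~\ref{Minverse}.} Part (a) will be a direct unpacking of definitions. Since $f \in q\Delta_{\TN}(G,\varepsilon;\varrho)$ means $[\varrho,\varepsilon](x)f(x) > 0$ for every $x$, I split on whether $\varrho(x) = \varepsilon(x)$ (forcing $f(x) > 0$, whence $\varepsilon_f(x) = \varepsilon(x) = \varrho(x)$) or $\varrho(x) \neq \varepsilon(x)$ (forcing $f(x) < 0$, whence $\varepsilon_f(x) = -\varepsilon(x) = \varrho(x)$). In both cases $\varepsilon_f = \varrho$.

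Part (b) should follow by composing three bijections. By Lemma~\ref{EDT}, the involution $P_{\varrho,\varepsilon}$ sends $q\Delta_{\TN}(G,\varepsilon;\varrho)$ onto $q\Delta_{\TN}^+(G,\varrho)$. Since $\rho \sim \varrho$ and both are acyclic, Lemma~\ref{Cut-Equiv-Property}(c) sends $q\Delta_{\TN}^+(G,\varrho)$ onto $q\Delta_{\TN}^+(G,\rho)$ via $Q_{\rho,\varrho}$. Finally, $P_{\varepsilon,\rho}$ returns this to $q\Delta_{\TN}(G,\varepsilon;\rho)$ by Lemma~\ref{EDT} again. Composing gives the claim.

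Part (c) is essentially an inversion of the explicit four-case formula (\ref{PQP}). Write $g := P_{\varepsilon,\rho} Q_{\rho,\varrho} P_{\varrho,\varepsilon}f$. For each edge $x$, we know $\varrho(x)$ and $\varepsilon(x)$ from $f$ (since $f$ determines $\varrho$ via part (a)), while $g(x) - f(x) \in \{0, -q, +q\}$ depending only on whether $\rho(x) = \varrho(x)$ and whether $\rho(x) = \varepsilon(x)$. Because $0 < |f(x)| < q$ strictly, the three candidate values $f(x), f(x) - q, f(x) + q$ lie in disjoint intervals and are therefore distinct. Hence $g(x)$ alone determines $\rho(x)$ at each edge, so $g$ determines $\rho$ uniquely, giving (c).

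Part (d) will be the main obstacle, and I would tackle it by two inclusions. For $\supseteq$: given $\alpha \sim \varrho$, by Lemma~\ref{Cut-Equiv-Property}(b), $\alpha$ is acyclic, so part (b) (with $\alpha$ in place of $\rho$) shows $g_\alpha := P_{\varepsilon,\alpha} Q_{\alpha,\varrho} P_{\varrho,\varepsilon}f \in q\Delta_{\TN}(G,\varepsilon;\alpha) \subseteq T(G,\varepsilon;q)$; the formula (\ref{PQP}) makes $g_\alpha \equiv f \pmod q$ immediate. For $\subseteq$: suppose $g \in T(G,\varepsilon;q)$ and $g \equiv f \pmod q$. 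The pointwise bound $|g(x)|, |f(x)| < q$ with $g(x) - f(x) \in q\mathbb{Z}$ forces $g(x) - f(x) \in \{-q, 0, q\}$; in particular $g(x) \neq 0$, so by identity (\ref{DTN-DTN}) of Lemma~\ref{EDT} there is a unique $\alpha \in \mathcal{O}(G)$ with $g \in q\Delta_{\TN}(G,\varepsilon;\alpha)$. The proof of Lemma~\ref{Induced-Orientation-C-equivalent} extends verbatim to real $q$-tensions (its use of integrality was only to get the difference to lie in $q\mathbb{Z}$, which is given here), so applying it to $f$ and $g$ yields $\alpha = \varepsilon_g \sim \varepsilon_f = \varrho$. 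Finally, observe that $g_\alpha$ also lies in $q\Delta_{\TN}(G,\varepsilon;\alpha)$ and satisfies $g_\alpha \equiv f \pmod q$ with the same congruence class at every edge; by the uniqueness established in part (c), this forces $g = g_\alpha$. This closes the inclusion and completes the proof.
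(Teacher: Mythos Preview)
Your argument is correct and tracks the paper's proof closely; parts (a), (b), and the overall shape of (d) are essentially identical. Two differences worth flagging.

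For (c), the paper is slicker: by (b), the two images lie in $q\Delta_{\TN}(G,\varepsilon;\rho)$ and $q\Delta_{\TN}(G,\varepsilon;\sigma)$ respectively, so by (a) applied to each image one reads off $\rho=\varepsilon_g$ and $\sigma=\varepsilon_h$; equality of the images forces $\rho=\sigma$. Your edge-by-edge inversion of (\ref{PQP}) also works but is more laborious.

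For the final step of (d), your appeal to ``the uniqueness established in part (c)'' is mislabeled: part (c) gives uniqueness of $\rho$ from $g$, not uniqueness of $g$ given $\alpha$ and the congruence class. The justification you actually need is immediate and separate: both $g$ and $g_\alpha$ lie in $q\Delta_{\TN}(G,\varepsilon;\alpha)$, so at each edge both values sit in the same open interval of length $q$ (either $(0,q)$ or $(-q,0)$, according to $[\alpha,\varepsilon](x)$), and two reals in such an interval that are congruent mod $q$ must coincide. The paper handles this step differently, matching $g$ against the four-line formula (\ref{PQP}) by a direct sign analysis to obtain $g=P_{\varepsilon,\varepsilon_g}Q_{\varepsilon_g,\varrho}P_{\varrho,\varepsilon}f$. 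Your explicit invocation of Lemma~\ref{Induced-Orientation-C-equivalent} to secure $\varepsilon_g\sim\varrho$ is a point in your favor: the paper's write-up leaves that verification implicit.
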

\begin{proof}
(a) Notice that $\varepsilon_f=\frac{f}{|f|}\varepsilon$ by
\eqref{EFX}, and $[\varrho,\varepsilon](x)f(x)>0$ for all $x\in
E(G)$. It then follows that $\varepsilon_f=\varrho$.

(b) Recall Lemma~\ref{EDT} and Lemma~\ref{Cut-Equiv-Property}(c); we have
\begin{gather}
P_{\varrho,\varepsilon} \bigl(q\Delta_{\TN}(G,\varepsilon;\varrho)\bigr)
=q\Delta^+_{\TN}(G,\varrho), \sp
Q_{\rho,\varrho}\bigl(q\Delta^+_{\TN}(G,\varrho)\bigr)
=q\Delta^+_{\TN}(G,\rho), \nonumber
\end{gather}
and $P_{\varepsilon,\rho} \big(q\Delta^+_{\TN}(G,\rho)\big)
=q\Delta^+_{\TN}(G,\varepsilon;\rho)$. The required identity follows
immediately.

(c) Let $g=P_{\varepsilon,\rho} Q_{\rho,\varrho} P_{\varrho,\varepsilon}f$ and
$h=P_{\varepsilon,\sigma} Q_{\sigma,\varrho} P_{\varrho,\varepsilon}f$. Then
$g\in q\Delta_{\TN}(G,\varepsilon;\rho)$ and $h\in
q\Delta_{\TN}(G,\varepsilon;\sigma)$ by (b). Thus
$\rho=\varepsilon_g=\varepsilon_h=\sigma$ by (a).

(d) Let $\alpha\in\mathcal O(G)$ be such that $\alpha\sim\varrho$. Then by
(\ref{PQP}),
\[
(P_{\varepsilon,\alpha} Q_{\alpha,\varrho}
P_{\varrho,\varepsilon}f)(x)=f(x)+k(x)q, \sp\mbox{where}\sp x\in E(G),\;
k(x)\in{\Bbb Z}.
\]
Clearly, $(P_{\varepsilon,\alpha} Q_{\alpha,\varrho}
P_{\varrho,\varepsilon}f)(x)\equiv f(x)\,(\mod q)$. Thus
$P_{\varepsilon,\alpha} Q_{\alpha,\varrho}
P_{\varrho,\varepsilon}f\in\Mod_q^{-1}\bigl(\Mod_q f\bigr)$.

Let $g\in T(G,\varepsilon;q)$ be such that $\Mod_q g=\Mod_q f$. Then
\begin{equation}\label{FGH}
g(x)=f(x)+h(x)q, \sp\mbox{where}\sp x\in E(G),\; h(x)\in\{-1,0,1\}.
\end{equation}
Since $f$ is nowhere-zero, by (a) and (\ref{FRX}) we have
$\varrho=\varepsilon_f=\frac{f}{|f|}\varepsilon$. Notice that
\begin{enumerate}
\item $\varepsilon_g(x)=\varepsilon_f(x)=\varepsilon(x)$ $\Leftrightarrow$
$f(x)>0$, $g(x)>0$;

\item $\varepsilon_g(x)=\varepsilon_f(x)\neq \varepsilon(x)$ $\Leftrightarrow$
$f(x)<0$, $g(x)\leq 0$;

\item $\varepsilon_g(x)\neq\varepsilon_f(x)=\varepsilon(x)$ $\Leftrightarrow$
$f(x)>0$, $g(x)\leq 0$;

\item $\varepsilon_g(x)\neq\varepsilon_f(x)\neq\varepsilon(x)$
$\Leftrightarrow$ $f(x)<0$, $g(x)>0$.
\end{enumerate}
The equation \eqref{FGH} implies that the function $g$ must have the form
\[
g(x) =\left\{
\begin{array}{ll}
f(x) & \mbox{if}\sp \varepsilon_g(x)=\varepsilon_f(x)=\varepsilon(x), \\
f(x) & \mbox{if}\sp \varepsilon_g(x)=\varepsilon_f(x)\neq \varepsilon(x), \\
f(x)-q & \mbox{if}\sp \varepsilon_g(x)\neq\varepsilon_f(x)=\varepsilon(x),\\
f(x)+q & \mbox{if}\sp \varepsilon_g(x)\neq\varepsilon_f(x)\neq\varepsilon(x).
\end{array}\right.
\]
On the other hand, recall the formula \eqref{PQP}; we have
\[
(P_{\varepsilon,\varepsilon_g} Q_{\varepsilon_g,\varepsilon_f}
P_{\varepsilon_f,\varepsilon}f)(x) =\left\{
\begin{array}{ll}
f(x) & \mbox{if}\sp \varepsilon_g(x)=\varepsilon_f(x)=\varepsilon(x), \\
f(x) & \mbox{if}\sp \varepsilon_g(x)=\varepsilon_f(x)\neq \varepsilon(x), \\
f(x)-q & \mbox{if}\sp \varepsilon_g(x)\neq\varepsilon_f(x)=\varepsilon(x),\\
f(x)+q & \mbox{if}\sp \varepsilon_g(x)\neq\varepsilon_f(x)\neq\varepsilon(x).
\end{array}\right.
\]
Therefore $g =P_{\varepsilon,\varepsilon_g} Q_{\varepsilon_g,\varepsilon_f}
P_{\varepsilon_f,\varepsilon}f =P_{\varepsilon,\varepsilon_g}
Q_{\varepsilon_g,\varrho} P_{\varrho,\varepsilon}f$.
\end{proof}

\begin{prop}\label{0-1-cut-equiv}
Fix an orientation $\varepsilon\in\mathcal O(G)$. The number of orientations of
$G$ that are cut-equivalent to $\varepsilon$ is the number of 0-1 tensions of
the digraph $(G,\varepsilon)$, i.e.,
\begin{gather}
\bar\tau_{\Bbb
Z}(G,\varepsilon;1)=\bigl|\bar\Delta_{\TN}^+(G,\varepsilon)\cap{\Bbb
Z}^E\bigr|.
\end{gather}
\end{prop}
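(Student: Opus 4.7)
The plan is to exhibit an explicit bijection between the set $\{\varrho\in\mathcal O(G):\varrho\sim\varepsilon\}$ and the set of $0$-$1$ tensions of $(G,\varepsilon)$. First I would observe that $\bar\Delta^+_{\TN}(G,\varepsilon)$, being the convex hull of $0$-$1$ tensions, lies inside the cube $[0,1]^E$, so its lattice points in $\mathbb Z^E$ coincide exactly with its $0$-$1$-valued vertices, i.e.\ with the $0$-$1$ tensions of $(G,\varepsilon)$. Hence the right-hand side of the claimed identity equals the cardinality of this set of tensions, and the proposition reduces to counting cut-equivalent orientations in terms of $0$-$1$ tensions.

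For each $\varrho\in\mathcal O(G)$ I would introduce the indicator function of the disagreement set, i.e.\ $g_\varrho:E\to\{0,1\}$ given by $g_\varrho(x)=1$ if $\varrho(x)\neq\varepsilon(x)$ and $g_\varrho(x)=0$ otherwise (equivalently $g_\varrho=1-f_\varrho$ in the notation of (\ref{FRX})). The assignment $\varrho\mapsto g_\varrho$ is manifestly injective, since on every edge only two orientations are available, and $g_\varrho$ records precisely the edges on which $\varrho$ deviates from $\varepsilon$.

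The crucial step is to identify the image. For any directed circuit $(C,\varepsilon_{\textsc c})$ one computes
\[
\sum_{x\in C}[\varepsilon,\varepsilon_{\textsc c}](x)\,g_\varrho(x)
=\sum_{x\in C\cap E(\varepsilon\neq\varrho)}[\varepsilon,\varepsilon_{\textsc c}](x).
\]
By Proposition~\ref{Directed-Cut-Character}, the right-hand side vanishes for every directed circuit if and only if $E(\varepsilon\neq\varrho)$ is empty or an oriented cut of $(G,\varepsilon)$, which is precisely the definition of $\varrho\sim\varepsilon$. Therefore $g_\varrho$ lies in $T(G,\varepsilon;\mathbb Z)\cap\{0,1\}^E$ exactly when $\varrho$ is cut-equivalent to $\varepsilon$. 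Conversely, given any $0$-$1$ tension $g$ of $(G,\varepsilon)$, define $\varrho_g$ by flipping the arrow of $\varepsilon$ on each edge with $g(x)=1$; then $E(\varepsilon\neq\varrho_g)=g^{-1}(1)$, so $g_{\varrho_g}=g$, and the tension identity for $g$ combined with Proposition~\ref{Directed-Cut-Character} forces $E(\varepsilon\neq\varrho_g)$ to be empty or an oriented cut, hence $\varrho_g\sim\varepsilon$. This yields the desired bijection and the equality $\bar\tau_{\Bbb Z}(G,\varepsilon;1)=\bigl|\bar\Delta^+_{\TN}(G,\varepsilon)\cap\mathbb Z^E\bigr|$.

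The only real pitfall is conceptual: the natural-looking indicator $f_\varrho$ of the \emph{agreement} set (as defined in (\ref{FRX})) does not yield a tension even for $\varrho=\varepsilon$, since the constant function $1$ is generally not a tension. Working instead with the complementary indicator $g_\varrho$ of the \emph{disagreement} set makes Proposition~\ref{Directed-Cut-Character} apply verbatim, after which everything reduces to matching definitions. Loops cause no difficulty, since every orientation agrees with $\varepsilon$ on loops and every tension must vanish on loops, so loops contribute trivially to both sides of the bijection.
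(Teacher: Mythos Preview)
Your proof is correct and essentially identical to the paper's: both construct the bijection $\varrho\mapsto$ (indicator of the disagreement set $E(\varepsilon\neq\varrho)$) and appeal to Proposition~\ref{Directed-Cut-Character} to check that this indicator is a tension precisely when $\varrho\sim\varepsilon$. The paper writes this indicator as $f_{-\varrho}$ in the notation of~(\ref{FRX}) rather than introducing your $g_\varrho=1-f_\varrho$, but it is the same map and the same argument.
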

\begin{proof}
Let $\varrho$ be an orientation which is cut-equivalent to $\varepsilon$. We
identity $\varrho$ as a 0-1 function $f_{-\varrho}$ given by (\ref{FRX}). We
claim that $f_{-\varrho}$ is a 0-1 tension of $(G,\varepsilon)$. In fact, for
any circuit $C$ with an orientation $\varepsilon_{\textsc c}$,
\[
\sum_{x\in C}[\varepsilon,\varepsilon_{\textsc c}](x)f_{-\varrho}(x)
=\sum_{x\in C\cap E(\varrho\neq\varepsilon)} [\varepsilon,\varepsilon_{\textsc
c}](x)=0.
\]
The last equality follows from
Proposition~\ref{Directed-Cut-Character}. So
$f_{-\varrho}\in\bar\Delta^+_{\TN}(G,\varepsilon)\cap{\Bbb Z}^E$.

Conversely, for any 0-1 tension $f$ of $(G,\varepsilon)$, we identify $f$ as an
orientation $-\varepsilon_f$ defined by (\ref{EFX}). We claim that
$-\varepsilon_f\sim\varepsilon$. Since $E(-\varepsilon_f\neq\varepsilon)
=\bigl\{x\in E\:|\:f(x)=1\bigr\}$, then for any circuit $C$ with an orientation
$\varepsilon_{\textsc c}$,
\[
\sum_{x\in C\cap E(-\varepsilon_f\neq\varepsilon)}
[\varepsilon,\varepsilon_{\textsc c}](x) =\sum_{x\in C}
[\varepsilon,\varepsilon_{\textsc c}](x)f(x) =0.
\]
It follows from Theorem~\ref{Directed-Cut-Character} that
$E(-\varepsilon_f\neq\varepsilon)$ is an oriented cut. Hence
$-\varepsilon_f\sim\varepsilon$.

Now it is easy to verify $f=f_{\varepsilon_f}$ and
$\varrho=-\varepsilon_{f_{-\varrho}}=\varepsilon_{f_\varrho}$. So
$\varrho\mapsto f_{-\varrho}$ is a one-to-one correspondence from
the set $[\varepsilon]$ of orientations cut-equivalent to
$\varepsilon$ to the set $\bar\Delta_{\TN}^+(G,\varepsilon)\cap{\Bbb
Z}^E$ of 0-1 tensions of $(G,\varepsilon)$.
\end{proof}

{\sc Proof of Theorem~\ref{Modular-TNP-Decom-RL}.}
We have seen that $\tau(G,t)$ is a polynomial of degree $r(G)$. Fix an
orientation $\varrho\in A\mathcal O(G)$. For any
$f\in\Delta_{\TN}(G,\varepsilon;\varrho)$, by Lemma~\ref{Minverse} and
Proposition~\ref{0-1-cut-equiv}, we have
\begin{align}
\big|T(G,\varepsilon;q)\cap \Mod_q^{-1}\big(\Mod_qf\bigr)\big| & =
\#\big\{ P_{\varepsilon,\rho}
Q_{\rho,\varrho} P_{\varrho,\varepsilon}f \:|\: \rho\sim\varrho\big\} \nonumber\\
& =\#\big\{\rho\in A\mathcal O(G)\:|\:\rho\sim\varrho\big\} \nonumber\\
& =\bar\tau_{\Bbb Z}(G,\varrho;1) \label{Fiber}
\end{align}
We claim that
\begin{equation}\label{SRMM}
\bigsqcup_{\rho\sim\varrho} q\Delta_{\TN}(G,\varepsilon;\rho)
=T(G,\varepsilon;q)\cap \Mod_q^{-1}\Mod_q\Biggl(\bigsqcup_{\rho\sim\varrho}
q\Delta_{\TN}(G,\varepsilon;\rho)\Biggr).
\end{equation}
Obviously, the left side of (\ref{SRMM}) is contained in its right side. For
any $g\in\bigcup_{\rho\sim\varrho} q\Delta_{\TN}(G,\varepsilon;\rho)$ and any
$h\in T(G,\varepsilon;q)\cap\Mod_q^{-1}(\Mod_qg)$, by Lemma~\ref{Minverse},
there exists a $\sigma\in\mathcal O(G)$ such that $\sigma\sim\rho$ and
$h=P_{\varepsilon,\sigma} Q_{\sigma,\rho} P_{\rho,\varepsilon}g\in
q\Delta_{\TN}(G,\varepsilon;\sigma)$. As $\rho\sim\varrho$ and $\sim$ is an
equivalence relation, we have $\sigma\sim\varrho$. This means that $h$ belongs
to the left-hand side of (\ref{SRMM}). Moreover, applying (\ref{Fiber}) and
Lemma~\ref{Cut-Equiv-Property}(c), we see that
\[
\bigl|T(G,\varepsilon;q)\cap
\Mod_q^{-1}\bigl(\Mod_qg\bigr)\bigr|=\bar\tau_{\Bbb Z}(G,\rho;1)
=\bar\tau_{\Bbb Z}(G,\varrho;1).
\]
Apply (\ref{TED-DTN}) of Lemma~\ref{EDT} and Lemma~\ref{Cut-Equiv-Property}(c)
to each set $q\Delta_{\TN}(G,\varepsilon;\rho)$ in the left-hand side of
(\ref{SRMM}); we see that
\[
\bigl|q\Delta_{\TN}(G,\varepsilon;\rho)\bigr|
=\bigl|q\Delta^+_{\TN}(G,\rho)\bigr| =\tau_{\Bbb Z}(G,\rho;q) =\tau_{\Bbb
Z}(G,\varrho;q).
\]
Notice that the left-hand side of (\ref{SRMM}) is a disjoint union
of $\bar\tau_{\Bbb Z}(G,\varrho;1)$ sets. Thus
\[
\tau_{\Bbb Z}(G,\varrho;q)\, \bar\tau_{\Bbb Z}(G,\varrho;1) =
\Bigl|\Mod_q\Bigl(\bigsqcup_{\rho\sim\varrho}
q\Delta_{\TN}(G,\varepsilon;\rho)\Bigr)\Bigr|\, \bar\tau_{\Bbb Z}(G,\varrho;1).
\]
Since $\bar\tau_{\Bbb Z}(G,\varrho;1)\geq 1$, it follows that
\[
\tau_{\Bbb Z}(G,\varrho;q) =\Bigl|\Mod_q\Bigl(\bigsqcup_{\rho\sim\varrho}
q\Delta_{\TN}(G,\varepsilon;\rho)\Bigr)\Bigr|.
\]

Now applying \eqref{DTN-DTN} of Lemma~\ref{EDT} and
Lemma~\ref{Positive-Nonempty}, we have
\[
\begin{split}
T_{\rm nz{\Bbb Z}}(G,\varepsilon;q) &= q\Delta_{\TN}(G,\varepsilon)\cap{\Bbb
Z}^E \\
&= \bigsqcup_{\varrho\in A\mathcal O(G)}
q\Delta_{\TN}(G,\varepsilon;\varrho)\cap{\Bbb Z}^E \\
&= \bigsqcup_{\varrho\in [A\mathcal O(G)]} \bigsqcup_{\rho\sim\varrho}
q\Delta_{\TN}(G,\varepsilon;\rho)\cap{\Bbb Z}^E.
\end{split}
\]
Since \eqref{SRMM} and $\Mod_q$ is surjective (Lemma~\ref{Surjectivity}), we
obtain the following disjoint decomposition
\begin{eqnarray}
T_{\rm nz}(G,\varepsilon;{\Bbb Z}/q{\Bbb Z}) &=& \Mod_q\bigl(T_{\rm nz{\Bbb
Z}}(G,\varepsilon;q)\bigr) \nonumber\\
&=& \bigsqcup_{\varrho\in[A\mathcal O(G)]}
\Mod_q\Bigl(\bigsqcup_{\rho\sim\varrho}
q\Delta_{\TN}(G,\varepsilon;\rho)\cap{\Bbb Z}^E\Bigr). \label{TNZGEZQZ}
\end{eqnarray}
Counting both sides of (\ref{TNZGEZQZ}), the identity (\ref{TGT}) is obtained
as
\[
\tau(G,q) = \sum_{\varrho\in[A\mathcal O(G)]} \tau_{\Bbb Z}(G,\varrho;q).
\]
The identity (\ref{BTGT}) follows from its definition. The
reciprocity law follows from the reciprocity law of the Ehrhart
polynomials $\tau_{\Bbb Z}(G,\varepsilon;q)$ and $\bar\tau_{\Bbb
Z}(G,\varepsilon;q)$. Since $\bar\tau_{\Bbb Z}(G,\varepsilon;0)=1$
for all $\varepsilon\in[A\mathcal O(G)]$, we see that
\[
\tau(G,0)=(-1)^{r(G)}\bar\tau(G,0)=(-1)^{r(G)}\big|[A\mathcal(G)]\big|.
\]
\qed.

\section{Connection with the Tutte polynomial}

The Tutte polynomial (see \cite{Bollobas1}) of a graph $G=(V,E)$ is a
polynomial in two variables
\begin{gather}
T(G;x,y)=\sum_{A\subseteq E(G)}(x-1)^{r\langle E\rangle-r\langle
A\rangle}(y-1)^{n\langle A\rangle},
\end{gather}
where $\langle A\rangle=(V,A)$, $r(G)=|V|-k(G)$, $k(G)$ is the
number of connected components of $G$, $r\langle
A\rangle=|V|-k(\langle A\rangle)$, $k(\langle A\rangle)$ is the
number of connected components of $\langle A\rangle$, and $n\langle
A\rangle=|A|-r\langle A\rangle$. The polynomial $T(G;x,y)$ satisfies
the Deletion-Contraction Relation
\[
T(G;x,y)=\left\{\begin{array}{ll} xT(G/e;x,y) &\mbox{if $e$ is a bridge}, \\
yT(G-e;x,y) & \mbox{if $e$ is a loop}, \\
T(G-e;x,y)+T(G/e;x,y) & \mbox{otherwise}.
\end{array}\right.
\]
It is well-known that the chromatic polynomial $\chi(G,t)$ is related to
$T(G;x,y)$ by
\begin{gather}
\chi(G,t)=(-1)^{r(G)}t^{k(G)}T(G;1-t,0).
\end{gather}
Since $\chi(G,t)=t^{k(G)}\tau(G,t)$, it follows that
\[
\tau(G,t)=(-1)^{r(G)}T(G;1-t,0).
\]
Thus
\[
\bar\tau(G,t)=(-1)^{r(G)}\tau(G,-t)=T(G;t+1,0).
\]
We conclude the information as the following proposition.

\begin{prop}
Let $G=(V,E)$ be a graph with possible loops and multiple edges. Then
\begin{align}
T(G;t,0) &= \bar\tau(G,t-1)=(-1)^{r(G)}\tau(G,1-t).
\end{align}
In particular, $T(G;1,0)=(-1)^{r(G)}\tau(G,0)=\bar\tau(G,0)$ counts the number
of cut-equivalence classes of acyclic orientations of $G$.
\end{prop}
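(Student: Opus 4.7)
The plan is to chain together three ingredients: the classical chromatic-Tutte identity, the chromatic-to-tension conversion, and the reciprocity law from Theorem~\ref{Modular-TNP-Decom-RL}. All three have already been established (the first is standard; the latter two are proved earlier in the paper), so the proposition should follow by substitution with no fresh combinatorial content.

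First I would recall the well-known identity
\[
\chi(G,t)=(-1)^{r(G)}t^{k(G)}T(G;1-t,0),
\]
which is an easy induction on $|E|$ using the Deletion-Contraction Relation for $T(G;x,y)$ and the matching recursion $\chi(G,t)=\chi(G-e,t)-\chi(G/e,t)$ for the chromatic polynomial (with the boundary behaviour at loops and bridges handled by the recursion). Combining this with the relation $\chi(G,t)=t^{k(G)}\tau(G,t)$ of Corollary~\ref{Chromatic-Tension-Polynomial-Relation}, and cancelling the nonzero factor $t^{k(G)}$ (as an identity of polynomials), yields
\[
\tau(G,t)=(-1)^{r(G)}T(G;1-t,0).
\]
Replacing $t$ by $1-t$ gives the second equality in the proposition, $T(G;t,0)=(-1)^{r(G)}\tau(G,1-t)$.

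Next I would apply the reciprocity law from Theorem~\ref{Modular-TNP-Decom-RL}, namely $\tau(G,-s)=(-1)^{r(G)}\bar\tau(G,s)$. Setting $s=t-1$, so that $-s=1-t$, yields
\[
\tau(G,1-t)=(-1)^{r(G)}\bar\tau(G,t-1),
\]
and substituting into the previous display gives the first equality $T(G;t,0)=\bar\tau(G,t-1)$, completing the chain of equalities in the main statement.

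Finally, the ``in particular'' claim is obtained by specializing $t=1$: this gives $T(G;1,0)=\bar\tau(G,0)=(-1)^{r(G)}\tau(G,0)$, and by the last sentence of Theorem~\ref{Modular-TNP-Decom-RL}, $(-1)^{r(G)}\tau(G,0)$ equals the number of cut-equivalence classes of acyclic orientations of $G$. There is no real obstacle in this proof; the only point requiring mild care is ensuring that the chromatic-Tutte identity is applied in the correct sign convention and that cancelling $t^{k(G)}$ is legitimate as an identity of polynomials, both of which are routine.
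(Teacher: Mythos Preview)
Your proposal is correct and follows essentially the same approach as the paper: the paper also derives the identity by combining the classical relation $\chi(G,t)=(-1)^{r(G)}t^{k(G)}T(G;1-t,0)$ with $\chi(G,t)=t^{k(G)}\tau(G,t)$ from (\ref{Chromatic-Tension-Polynomial-Relation}) to obtain $\tau(G,t)=(-1)^{r(G)}T(G;1-t,0)$, then invokes the reciprocity law of Theorem~\ref{Modular-TNP-Decom-RL} and specializes to $t=1$. Your write-up is slightly more explicit about justifying the cancellation of $t^{k(G)}$ and the deletion-contraction provenance of the chromatic-Tutte identity, but the argument is the same.
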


\begin{exmp}
Let $G$ be the labelled graph in {\sc Figure}~\ref{Double-triangle} with $4$
vertices and 5 edges.
\begin{figure}[h]
\centering \subfigure{\psfig{figure=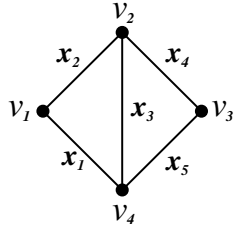,width=30mm}} \caption{A
labelled double triangle graph.} \label{Double-triangle}
\end{figure}
Its chromatic polynomial and modular tension polynomial are given by
\[
\chi(G,t)=t(t-1)(t-2)^2, \sp \tau(G,t)=(t-1)(t-2)^2.
\]
Take the directions $v_1v_2v_4v_1$ and $v_2v_3v_4v_2$ for the left and right
circuits in {\sc Figure}~\ref{Double-triangle}; we see that the number of
nowhere-zero integral $q$-tensions of $G$ is the number of integral solutions
of the system of linear equalities and inequalities
\[
x_1+x_2+x_3=0, \sp -x_3+x_4+x_5=0,\sp -q<x_i<q,\sp x_i\neq 0.
\]
Counting the number of integral solutions of the system gives the integral
tension polynomial
\[
\tau_{\Bbb Z}(G,q)=6(q-1)^2(2q-3)+\frac{(q-1)q(2q-1)}{3}.
\]
Then $|\chi(G,-1)|=|\tau(G,-1)|=|\tau_{\Bbb Z}(G,0)|=18$ is the number of
acyclic orientations of $G$, and $|\tau(G,0)|=4$ is the number of
cut-equivalence classes of acyclic orientations; see {\sc
Figure}~\ref{Double-tiangle-acyclic} below.
\begin{figure}[h]
\centering \subfigure{\psfig{figure=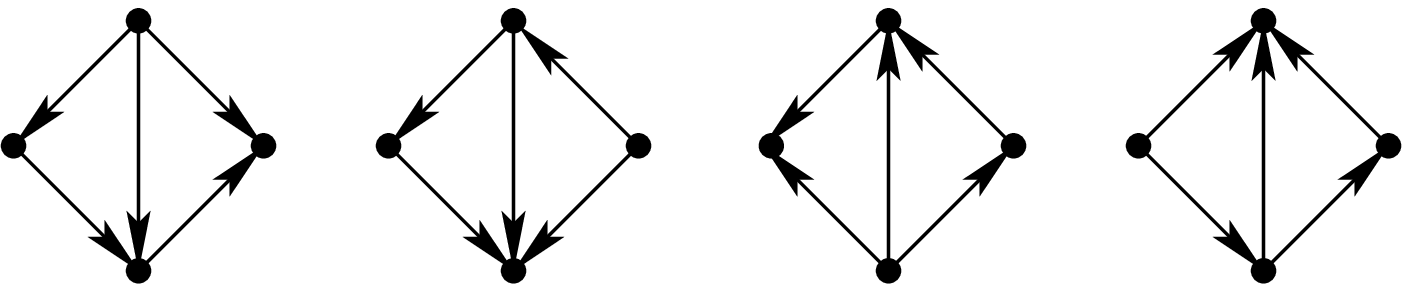,width=65mm}}

\subfigure{\psfig{figure=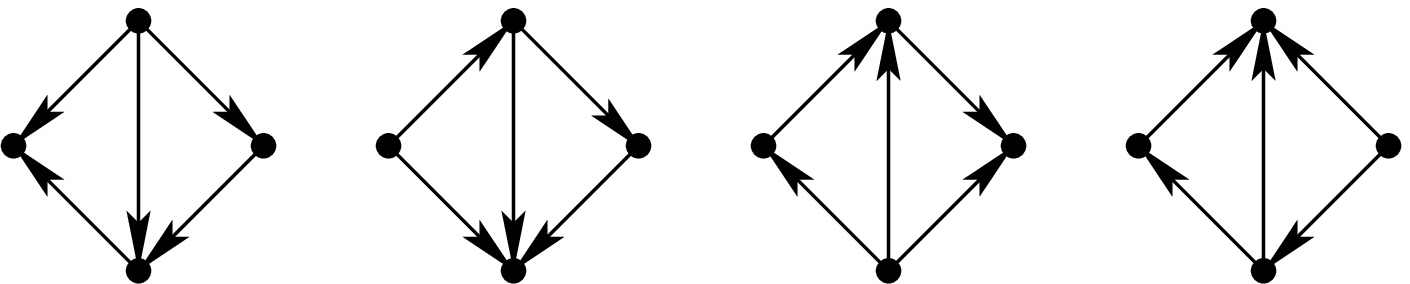,width=65mm}}

\subfigure{\psfig{figure=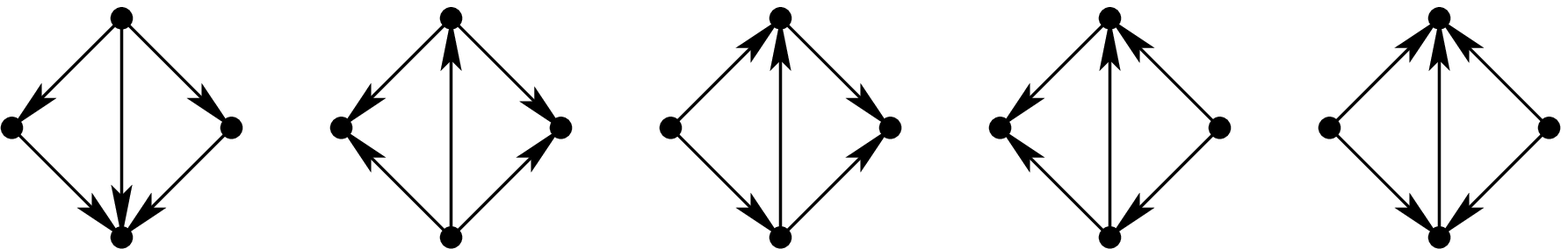,width=80mm}}

\subfigure{\psfig{figure=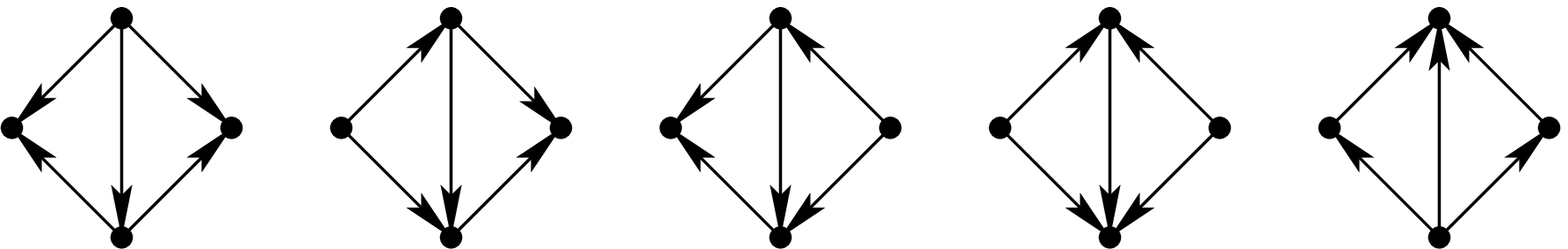,width=80mm}}
\caption{Cut-equivalence classes of acyclic orientations.
\label{Double-tiangle-acyclic}}
\end{figure}

There are total of 32 orientations. The other 14 cyclic orientations constitute
10 cut-equivalence classes, 6 of them are singletons, and each of the other 4
classes contains exactly two orientations.
\end{exmp}

{\bf Acknowledgement} The author thanks Prof. Libo Yang for reading
the manuscript carefully and offering some valuable comments.

\bibliographystyle{amsalpha}

\end{document}